\documentclass[oneside,reqno, twoside, 11pt]{amsart}  
\usepackage[pdftex]{graphicx}
\usepackage{lmodern}
\usepackage{comment}
\usepackage[english]{babel}   
\usepackage{upgreek}   

\usepackage{amsfonts}  
\usepackage{extpfeil}
\usepackage{verbatim}
\usepackage{BOONDOX-cal}

\usepackage{microtype}
\usepackage[new]{old-arrows}
\usepackage{amsmath}
\usepackage{tikz-cd}

\usepackage{amsthm}
\usepackage{mathtools}

\usepackage{bbm}
\usepackage{paralist}
\usepackage[T1]{fontenc}
\usepackage[colorlinks,citecolor=magenta,pagebackref=true,urlcolor=magenta, bookmarksdepth=3]{hyperref}
\usepackage[nodisplayskipstretch]{setspace}

\usepackage[all]{xy}

\usepackage{nicefrac}

\usepackage[font=small,labelfont=bf]{caption}
\usepackage[labelformat=simple]{subcaption}

\usepackage{epsfig}

\usepackage{mathrsfs}
\usepackage{appendix}  
\usepackage{color}
\usepackage[T1]{fontenc}  
\usepackage{amsmath, amssymb, amsthm, mathtools}
\usepackage[margin=1in]{geometry}
\usepackage{hyperref}
\usepackage[all]{xy}

\DeclareFontFamily{OT1}{pzc}{}
\DeclareFontShape{OT1}{pzc}{m}{it}{<-> s * [1.10] pzcmi7t}{}
\DeclareMathAlphabet{\mathpzc}{OT1}{pzc}{m}{it}

\makeatletter
\newtheorem*{rep@theorem}{\rep@title}
\newcommand{\newreptheorem}[2]{%
	\newenvironment{rep#1}[1]{%
		\def\rep@title{#2~\ref{##1}}%
		\begin{rep@theorem}}%
		{\end{rep@theorem}}}

\makeatother

\theoremstyle{plain}
\newtheorem*{thm*}{Theorem}
\newtheorem*{cor*}{Corollary}
\newtheorem{thm}{Theorem}[section]

\newtheorem*{lem*}{Lemma}

\newtheorem{prb}[thm]{Open Problem}

\theoremstyle{definition}

\newcommand{\supp}{\mathrm{supp}}

\newcommand{\Z}{\mathbb{Z}}
 \newcommand{\R}{\mathbb{R}}
 
\newcommand{\bbF}{\mathbb{F}}

\renewcommand{\AA}{\mathcal{A}}

\newcommand{\cB}{\mathcal{B}}

\newcommand{\cM}{\mathcal{M}}
\newcommand{\cF}{\mathcal{F}}
\newcommand{\cS}{\mathcal{S}}
\newcommand{\cO}{\mathcal{O}}
\newcommand{\cC}{\mathcal{C}}
\newcommand{\cG}{\mathcal{G}}
\newcommand{\cE}{\mathcal{E}}

\newcommand{\sbseq}{\subseteq}
\newcommand{\spseq}{\supseteq}

\newcommand{\w}{\mathbf{w}}

\newcommand{\vanish}[1]{}
\renewcommand{\a}{\mathbf{a}}

\def\coker{\mathrm{coker}}
\def\sbs\subset
\def\sbseq{\subseteq}

\def\langle{\left<}
\def\rangle{\right>}

\def\({\left(}
\def\){\right)}
\def\no={\,{\,|\!\!\!\!\!=\,\,}}

\def\no={\,{\,|\!\!\!\!\!=\,\,}}
\def\sbseq{\subseteq}

\def\sbseq{\subseteq}
\def\sbs\subset
\def\spseq{\supseteq}

\newcommand{\xqedhere}[2]{%
\rlap{\hbox to#1{\hfil\llap{\ensuremath{#2}}}}}

\newcommand{\cm}[1]{}

\newcommand\mr[1]{\mathrm{#1}}

\newcommand{\fld}{\mathbbm{k}}

\newcommand{\m}{\mathfrak{m}}

\newcommand\com[1]{}

\renewcommand\deg{\mr{deg}}

\newcommand{\h}{\mathbf{h}}
\newcommand\x{\mathbf{x}}
\newcommand{\y}{\mathbf{y}}
\newcommand{\z}{\mathbf{z}}

\renewcommand\t{\mathbf{t}}

\DeclareMathOperator\Tr{Tr}

\DeclareMathOperator{\Hom}{Hom}
\DeclareMathOperator{\Ext}{Ext}

\DeclareMathOperator{\Tot}{Tot}

\DeclareMathOperator{\Vol}{Vol}

\DeclareMathOperator{\Supp}{Supp}

\newcommand{\KK}{\mathcal{K}}

\theoremstyle{definition}
\newtheorem{theorem}{Theorem}[section]
\newtheorem{definition}[theorem]{Definition}
\newtheorem{remark}[theorem]{Remark}
\newtheorem{example}[theorem]{Example}

\newtheorem{lemma}[theorem]{Lemma}
\newtheorem{prop}[theorem]{Proposition}

\newtheorem{corollary}[theorem]{Corollary}

\title[Frobenius identities for the volume map on Cohen--Macaulay rings]{Frobenius identities for the volume map on Cohen--Macaulay rings}
\author[Karim Adiprasito]{Karim Alexander Adiprasito}
\address{{Karim Adiprasito \ \emph{and}\ Ryoshun Oba\ \emph{and}\ Vasiliki Petrotou}, Sorbonne Université and Université Paris Cité, CNRS, IMJ-PRG, F-75005 Paris, France}
\email{adiprasito@imj-prg.fr \emph{and} oba@imj-prg.fr \emph{and} petrotou@imj-prg.fr}
\author{Eric Katz}
\address{Eric Katz, Department of Mathematics, The Ohio State University, 231 West 18th Avenue,Columbus, OH 43210-1174}
\email{katz.60@osu.edu}

\author{Ryoshun Oba}

\author[Stavros Papadakis]{Stavros Argyrios Papadakis}
\address{{Stavros Papadakis}, Department of Mathematics, University of Ioannina, Ioannina, 45110, Greece}
\email{spapadak@uoi.gr}
\author[Vasiliki Petrotou]{Vasiliki Petrotou}

\keywords{}
\subjclass[2010]{}

\begin{document}

\begin{abstract}
We study the volume map on Artinian quotients of Cohen-Macaulay algebras in characteristic $p$, and the interaction between it and the action of Frobenius on resolutions. This allows us to provide a general, conceptual way to understand Parseval--Rayleigh identities, curious inhomogeneous identities on the volume map which were developed for the proof of the Ohsugi--Hibi conjecture in \cite{APPlattice}. This general perspective gives a new approach to generic Lefschetz theory. We use this perspective to do the following: we give sufficient conditions for anisotropy and the Hard Lefschetz property  for generic Artinian reductions of graded Gorenstein rings; we
study the codimension-$3$ Gorenstein quotient of a polynomial ring by the ideal generated by Pfaffians, proving a Parseval--Rayleigh identity and deriving anisotropy and Hard Lefschetz in characteristic $2$; we deduce the $g$-theorem for simplicial spheres and the Ohsugi--Hibi conjecture following previous work of Adiprasito, Papadakis, and Petrotou; and 
we provide further examples of Parseval--Rayleigh identities for Gorenstein rings.
\end{abstract}

\maketitle

\section{Introduction}

In \cite{APPlattice}, Adiprasito, Papadakis and Petrotou, consider lattice spheres $\Sigma$ of dimension $(d-1)$, they consider the toric face ring over $\Sigma$ over a field of characteristic two, and describe a choice of isomorphism $\Vol\colon A^d(\Sigma)\to \fld$.
They prove the following inhomogeneous identity, which they called the \emph{Parseval--Rayleigh identity}
\[\Vol\left(\x^{\mathbf{g}}\right)=(-1)^{m}\sum_{B\in\cB} \Vol\left(\x^{\frac{\mathbf{g}+\overline{B}}{p}}\right)^p \frac{\theta^B}{B!}.\]
Using this, they conclude the Hard Lefschetz property for lattice spheres in characteristic two.

The purpose of this paper is to systematize this work and relate it to the action of Frobenius, which makes the Parseval--Rayleigh identity not just immediate but inevitable, and describe Parseval--Rayleigh identities for general Cohen-Macaulay rings. We refine the homological approach to the volume map introduced in \cite{APPKM}:
\[\Vol\colon (\omega_A)_0\to \fld\]
by recognizing it as the inverse of a trace map, see Definition~\ref{def:volume-map}.

From this definition, we are able to rephrase the Parseval--Rayleigh identity as an immediate consequence of the Frobenius-equivariance of the volume map. This yields the \emph{abstract Parseval--Rayleigh identity}
\[\Vol(\z)=\Vol(\Phi^*_A(\z))^p\]
on Artinian quotients $R/I$ of Cohen--Macaulay graded standard $\fld$-algebras $R$ where $\Phi^*_A\colon \varphi_*\omega_A\to \omega_A$ is the Frobenius trace, see Theorem~\ref{t:abstractpr}. In the case of Gorenstein rings, this was discovered independently by Mykola Pochekai \cite[Theorem~1.1]{Po}; his proof is similar to ours, and both take inspiration from the original arguments in \cite{APPlattice, AOPP:complete}, perhaps underlining how inevitable this form of the Parseval--Rayleigh identity is. Still, it is the action of Frobenius that is the new insight in both that conceptualizes the proof in a new way.

In characteristic $2$, we recover the Parseval--Rayleigh identity of \cite{APPlattice}, one of the main ingredients of the proof of the Ohsugi--Hibi conjecture, as it implies an anisotropy property of toric face rings. This identity encompasses the differential identities for the volume polynomial in positive characteristic \cite{PP, APPhal}, shedding more light onto the second proof of the $g$-conjecture in characteristic two. In the case of face rings, Parseval--Rayleigh identities were also announced in \cite{KLS}. We hope that this work clarifies these mysterious identities. 

In Section~\ref{s:parsevalimpliesanisotropy}, we give a criterion after \cite{AOPP:complete} and \cite {KLS} to derive anisotropy and the Lefschetz property from the Parseval--Rayleigh identity. We recover the Lefschetz property for IDP lattice polytopes, simplicial spheres and complete intersections, deducing the $g$-theorem for simplicial spheres in characteristic $2$ and the Ohsugi--Hibi conjecture in Section~\ref{s:latticepolytopes}, following \cite{APPlattice}.

Our main new application is to the codimension-$3$ Lefschetz conjecture of Migliore and Zanello \cite[Question 3.8]{MZ}.

\begin{prb}
	Do all codimension-$3$ Gorenstein algebras in characteristic 0 possess the weak Lefschetz property?
\end{prb}

We address this question in characteristic two, see Section~\ref{s:pfaffians}, and use the Parseval--Rayleigh identity to prove the anisotropy for the generic Pfaffian in characteristic two (which, in turn, implies that for characteristic zero). This gives a new proof for the generic Lefschetz property, which was previously shown using different methods \cite{ABD}. 


We also provide some additional examples of Parseval--Rayleigh identities on Gorenstein rings in Section~\ref{s:additional}, hinting at future developments.

There are many open conjectures about the Lefschetz properties of generic Artinian reductions on Cohen--Macaulay and Gorenstein rings.
For example, Migliore and Nagel ask if the generic Artinian reductions of reduced standard $\fld$-algebras have the weak Lefschetz property \cite[Question~3.8]{MN:tour}).  \cite{Braun:unimodality} asked whether generic Artinian reductions of Gorenstein integral domains have the Lefschetz property. 
We also mention that some non-generic reductions are relevant to geometry. For example, the Jacobian ring of a hypersurface with given Newton polytope \cite{Batyrev:variations} is an Artinian quotient of a semigroup ring to which our methods do not directly apply. 
While we cannot yet answer such questions, we believe our methods and the Parseval--Rayleigh identity in other settings might give an approach to such conjectures. 

We would like to acknowledge Patrick Brosnan, Mircea Mustata, Frank-Olaf Schreyer, and Karen Smith for helpful discussions. We benefitted greatly from using the
computer algebra program Macaulay2 \cite{M2} and thank its developers and contributors. K.\,A.\,A., R.\,O., and V.\,P., were supported by Horizon Europe ERC Grant
number: 101045750 / Project
acronym: HodgeGeoComb. R. O. was supported by JSPS Overseas Research
Fellowships.

\section{The Volume Map}

We define the volume map on an Artinian ring $A\coloneqq R/I$ where $R$ is a $d$-dimensional finitely generated Cohen--Macaulay graded standard $\fld$-algebra using some ideas from \cite{APPKM}.

Let $\m$ be the maximal ideal of $R$ generated by positive degree elements. 
We have the identification of the canonical modules of $R/\m\cong \fld$ and $R/I$ by \cite[Theorem~3.3.7]{BrunsHerzog},
\[
  \omega_{R/\m} \cong \operatorname{Ext}^{d}_R(R/\m,\omega_R),
  \qquad
  \omega_{R/I} \cong \operatorname{Ext}^{d}_R(R/I,\omega_R).
\]
These isomorphisms can be interpreted as graded local duality \cite[Theorem 3.6.19(b)]{BrunsHerzog}.

Since $I \subset \m$, there exists a natural surjective map
\[
  A\coloneqq R/I \longrightarrow R/\m\cong \fld,
\]
which we claim induces an injective trace map
\[
  \Tr_{k/A} : \omega_{R/\m} \cong \operatorname{Ext}^d_R(R/\m,\omega_R)
  \to
  \omega_{A} \cong \operatorname{Ext}^d_R(A,\omega_R).
\]
Indeed, from the exactness of 
\[\
\xymatrix{
0\ar[r]&\m A\ar[r]&A\ar[r]&\fld\ar[r] &0
},\]
we have the exactness of
\[\xymatrix{
\Ext_R^{d-1}(\m A,\omega_R)\ar[r]&\Ext_R^d(\fld,\omega_R) \ar[r]&\Ext_R^d(A,\omega_R)
}.\]
The leftmost term vanishes because $\Ext_R^{d-1}(\m A,\omega_R)\cong H^1_{\m}(\m A)^\vee$ and $\m A$ is a finite length $R$-module.
Now, $\omega_{\fld}\cong \fld$, so we can fix a graded $R$--module isomorphism between them. Since $A$ is generated in positive degree, $(\omega_A)_0\cong \fld$. 

\begin{definition}\label{def:volume-map}
The \emph{volume map} on $A$ is the inverse to $\Tr_{\fld/A}$ on the zeroth graded part,
$\Vol\colon (\omega_A)_0 \to \omega_\fld\cong \fld$.
\end{definition}

\begin{remark}
  The volume map can be computed from a minimal free resolution $F_\bullet\to R/I$ over $R$. Now, $k\cong R/\m$ is resolved over $R$  by some complex $G_\bullet$. The quotient homomorphism $R/I\to R/\m$ extends to a chain map (unique up to chain homotopy), $F_\bullet\to G_\bullet$ to which we can apply $\Hom(\cdot,\omega_R)$ to obtain $\Hom(G_\bullet,\omega_R)\to \Hom(F_\bullet,\omega_R)$. This induces an injection $H^d(\Hom(G_\bullet,\omega_R))\to H^d(\Hom(F_\bullet,\omega_R))$ which is an isomorphism on the $0$th graded component of the target.
When $R\cong \fld[x_1,\dots,x_d]$, we can let $G_\bullet\coloneqq \KK((x_1,\dots,x_d),R)$ be the Koszul complex for $R/\m$.
\end{remark}


The volume map has a naturality property for quotients. If $I_1\subset I_2$ are ideals in $R$ inducing a quotient map of Artinian rings $A_1\coloneqq R/I_1\to A_2\coloneqq R/I_2$, then for $z\in(\omega_{A_2})_0$, we define $\Tr_{A_2/A_1}(z)\in (\omega_{A_1})_0$ to be the image of $z$ under $\Ext^d(R/I_2,\omega_R)\to \Ext^d(R/I_1,\omega_R)$. By considering the composition, $R/I_1\to R/I_2\to R/\m$, we immediately obtain
$\Vol_{A_1}(\Tr_{A_2/A_1}(z))=\Vol_{A_2}(z).$


\section{The general Parseval--Rayleigh Identity}

\subsection{The action of Frobenius}
In this section, we will suppose that $R'$ is a finitely generated graded $d$-dimensional Cohen--Macaulay $\bbF_p$-algebra. Write $R\coloneqq R'\otimes_{\bbF_p} \fld$ and $\overline{R}\coloneqq R'\otimes_{\bbF_p}\overline{\fld}$ where $\fld$ is a field extension of $\bbF_p$ and $\overline{\fld}$ is its algebraic closure. Write $\varphi\colon R'\to R'$ for the usual $p$-power Frobenius.
Recall that for an $R'$-module $M$, we write $\varphi_*M$ for the module $M$ equipped with the $R'$-action $r\cdot m=r^pm$. This is an exact functor. There is a Frobenius trace on $\Phi_{R'}^*\colon \varphi_*\omega_{R'}\to \omega_{R'}$ dual to $\varphi$, defined by writing $\omega_{R'}\cong H^d_{\m}(R')^\vee$ and having Frobenius act on the \v{C}ech resolution used to compute $H^*_{\m}(R')$. Similarly, there is a Frobenius trace $\Phi_{\overline{R}}\colon \varphi_*\omega_{\overline{R}}\to \omega_{\overline{R}}$ which satisfies $\Phi_{\overline{R}}=\varphi_{\overline{k}}^{-1}\circ (\Phi_R\otimes \overline{\fld})$ where $\varphi_{\overline{\fld}}$ is the Frobenius on $\overline{\fld}$.

\begin{example} \label{e:polyring} We can write the Frobenius trace on $\overline{R}\coloneqq \overline{\fld}[x_1,\dots,x_d]$. 
The $p$-power map on the \v{C}ech complex, $C^\bullet\to \varphi_*C^\bullet$  induces a Frobenius action on local cohomology module $H^d_{\m}(\overline{R})$ and by duality, the Frobenius trace on 
\[\omega_{\overline{R}}\cong  (x_1\dots x_d)\overline{\fld}[x_1,\dots,x_d]\]
 given by 
 \[\x^{\a}\mapsto 
\begin{cases}
 \x^{\a/p}&\text{if } p\mid\a\\
 0&\text{else}
 \end{cases}\]
 where $p\mid\a$ means that $p$ divides every component of $\a$.
Under the isomorphism $\omega_{\overline{R}}\cong \overline{\fld}[x_1,\dots,x_d](-d)$, it has the more familiar form
 \[\x^{\a}\mapsto 
\begin{cases}
 \x^{\frac{\a+\mathbf{1}}{p}-\mathbf{1}}&\text{if } p\mid(\a+\mathbf{1})\\
 0&\text{else}
 \end{cases}\]
 where $\mathbf{1}$ denotes the vector of all ones. 
\end{example}

We will define a Frobenius trace on a codimension-$t$ Cohen--Macaulay ring $A\coloneqq R/I$ by taking a minimal free resolution $F_\bullet\to A$ over $R$ and observing 
 \[\omega_A\cong \Ext^t(A,\omega_R)\cong H^t(\Hom(F_\bullet,\omega_R)).\]
 For convenience, we will write a chain map $F_\bullet\to \varphi_*F_\bullet$ lifting $\varphi_*$ by applying adjunction to a chain map $\Phi_\bullet\colon \varphi^*F_\bullet\to F_\bullet$, following \cite[Section~8.2]{BrunsHerzog}. Let $R^{\varphi}$ be the ($R$-$R$)-bimodule with underlying group $R$ equipped with actions
\[a\cdot r \circ b=arb^p,\quad a,b\in R,\ r\in R^{\varphi}.\]
For an $R$-module $F$, write $\varphi^*F\coloneqq R^{\varphi}\otimes F$.
The adjoint of the Frobenius map $\varphi\colon A\to \varphi_*A$ is a map $\varphi^*A\to A$ that can be identified as the natural surjection $R/I^{[p]}\to R/I$ where $I^{[p]}$ is the ideal obtained by applying the $p$-power map to each element of $I$.
 For a complex $(F_\bullet,\partial_\bullet)$, $\varphi_*F_\bullet$ is 
\[\xymatrix{
\cdots \ar[r]& \varphi^*F_d \ar[r]^{\partial_d} & \varphi^*F_{d-1}  \ar[r]^>>>>{\partial_{d-1}}&\dots\ar[r]^{\partial_{2}}&\varphi^*F_1 \ar[r]^{\partial_1}&\varphi^*F_0.
}\]
Explicitly, if we fix an isomorphism $F_i\cong R^{n_i}$, it can be written as
\[\xymatrix{
\ \ar[r]& R^{n_d} \ar[r]^{\varphi(M_d)} & R^{n_{d-1}}\ar[r]^>>>>{\varphi(M_{d-1})}&\dots\ar[r]^{\varphi(M_2)}&R^{n_1}\ar[r]^{\varphi(M_1)}&R^{n_0}.
}\]
where $M_i$ is the matrix representing $\partial_i$ and $\varphi(M_i)$ 
denotes the Frobenius applied entrywise to $M_i$. Now, because $\varphi^*F_\bullet$ is a complex of free $R$-modules and $F_\bullet$ is exact in positive degrees, there is a chain map $\Phi^{\flat}_\bullet\colon \varphi^*F_\bullet\to F_\bullet$ such that $\Phi^{\flat}_0\colon \varphi^*F_0\to F_0$ is the identity. By adjunction, $\Phi^{\flat}_\bullet\colon \varphi^*F_\bullet\to F_\bullet$ induces a chain map $\Phi_\bullet\colon F_\bullet\to \varphi_*F_\bullet$.

\begin{definition} 
Let $F_\bullet\to A$ be a minimal free resolution of $A$ over $R$. Let $\Phi_{\bullet}\colon F_\bullet\to \varphi_*F_\bullet$ be a chain map lifting the $p$-power map on $A$. Define $\Phi^*_{A,i}$ by
\[\Phi^*_{A,i}\colon \varphi_*\Hom(F_i,\omega_R)\to \Hom(F_i,\omega_R)\otimes\overline{\fld},\quad \psi\mapsto \Phi_{\overline{R}}^*\circ\psi\circ \Phi_i.\]
The {\em Frobenius trace} $\Phi^*_A\colon \varphi_*\omega_A\to\omega_A\otimes\overline{\fld}$ is defined to be the map induced by $\Phi^*_{A,t}$:
\[\varphi_*\omega_A\cong \varphi_*\Ext^t(A,\omega_R)\cong \varphi_* H^t(\Hom(F_i,\omega_R))\to H^t(\Hom(F_i,\omega_R))\otimes\overline{\fld}\cong \Ext^t(A,\omega_R)\otimes\overline{\fld}\cong \omega_A\otimes\overline{\fld}.\]
\end{definition}

For $\fld$ perfect, this definition agrees with the usual Frobenius trace on $A$. Indeed, the isomorphism $\omega_A\cong \Ext^t(A,\omega_R)$ arises from considering the double complex $E_0^{i,-j}\coloneqq \Hom_R(F_i,(C^j)^\vee)$
where $(C^\bullet)^\vee$ is the graded dual of the \v{C}ech resolution of $R$. We compute the cohomology of the total complex $T^\bullet$ (where $T^m\coloneqq\bigoplus_j E_0^{m+j,-j}$) by a spectral sequence of the double complex in two different ways. Because the formation of the \v{C}ech complex arises from taking the direct sum of localizations (which is an exact functor), one spectral sequence has the following pages: $E_1^{i,-j}$ is $\Hom_R(A,(C_j)^\vee)$ at $i=0$ and zero elsewhere; $E_2^{i,-j}$ is nonzero only at $(i,j)=(0,d-t)$ where it is $\omega_A$ . The other spectral sequence has the following pages: $E_1^{i,-j}$ is $\Hom_R(F_i,\omega_R)$ at $j=d$ and zero elsewhere; $E_2^{i,-j}$ is nonzero only at $(t,d)$ where it is $\Ext^t_R(A,\omega_R)$. Hence, one obtains the isomorphism $\omega_A\cong \Ext^t(A,\omega_R)$.  Now, the map $\psi\mapsto \Phi_R^*\circ\psi\circ \Phi_i$ gives a map of double complexes $\varphi_*E_0^{i,-j}\to E_0^{i,-j}$ which induces the usual Frobenius trace on $\omega_A$ and the map defined above on $\Ext^t_R(A,\omega_R)$.

Henceforth, we will suppose that the isomorphism $\psi\colon \omega_{\fld}\otimes\overline{\fld} \to \overline{\fld}$ is chosen to be Frobenius-equivariant, i.e., for $z\in (\omega_{\overline{k}})_0$,
$\psi(\Phi_{\overline{k}}(z))=\psi(z)^{1/p}.$


\begin{theorem}[Abstract Parseval--Rayleigh Identity] \label{t:abstractpr} Let $A\coloneqq R/I$ be an Artinian quotient of a $d$-dimensional standard Cohen--Macaulay $\fld$-algebra.
 For $\z\in (\omega_A)_0$, we have
 $\Vol(\z)=\Vol(\Phi^*_A(\z))^p.$
 \end{theorem}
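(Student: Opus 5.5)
The identity should follow from the fact that the trace map $\Tr_{\fld/A}\colon \omega_\fld \to \omega_A$ commutes with the Frobenius traces. The volume map is defined as the inverse of $\Tr_{\fld/A}$ on degree zero, and the isomorphism $\omega_\fld\otimes\overline{\fld}\cong\overline{\fld}$ was chosen to be Frobenius-equivariant. So if we show $\Phi^*_A\circ \Tr_{\fld/A} = (\Tr_{\fld/A}\otimes\overline{\fld})\circ \Phi^*_\fld$, the identity drops out.

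First, reduce to a statement about resolutions. Let $F_\bullet\to A$ and $G_\bullet\to \fld=R/\m$ be minimal free resolutions over $R$. Let $\alpha_\bullet\colon F_\bullet\to G_\bullet$ be a chain map lifting the quotient $A\to\fld$. Let $\Phi^F_\bullet\colon F_\bullet\to\varphi_*F_\bullet$ and $\Phi^G_\bullet\colon G_\bullet\to \varphi_*G_\bullet$ be chain maps lifting the $p$-power maps on $A$ and on $\fld$. Both chain maps $F_\bullet\to \varphi_*G_\bullet$,
\[\Phi^G_\bullet\circ\alpha_\bullet \quad\text{and}\quad \varphi_*(\alpha_\bullet)\circ \Phi^F_\bullet,\]
lift the same map $A\to\varphi_*\fld$, because the $p$-power map commutes with the surjection $A\to\fld$. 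Passing to the adjoint maps $\varphi^*F_\bullet\to G_\bullet$, both are chain maps from a complex of free modules into a resolution of $\fld$ lifting the same module map. By the comparison theorem they are chain homotopic. The step needing care is that $\varphi^*F_\bullet$ need not be acyclic, but the comparison theorem only needs the source to be a complex of projectives and the target to be acyclic, so this is fine.

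Next, apply $\Hom(-,\omega_R)$ and post-compose with $\Phi^*_{\overline{R}}$. The maps $\psi\mapsto\Phi^*_{\overline{R}}\circ\psi\circ\Phi_i$ then give the square relating $H^d(\Hom(G_\bullet,\omega_R))$ and $H^d(\Hom(F_\bullet,\omega_R))$, with the Frobenius traces $\Phi^*_\fld$ and $\Phi^*_A$ as vertical maps and $\Tr_{\fld/A}$ as horizontal maps. The homotopy from the previous step becomes a homotopy of the induced maps on $\Hom$-complexes, since composing with $\Phi^*_{\overline{R}}$ is additive and compatible with differentials. So the square commutes on $\Ext^d$; here $t=d$ because $A$ is Artinian. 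Well-definedness, i.e. independence of the choice of $\Phi_\bullet$, follows from the same homotopy argument.

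Finally, take $\z\in(\omega_A)_0$ and write $\z=\Tr_{\fld/A}(w)$, with $\Vol(\z)$ corresponding to $w$ under the fixed isomorphism. Then
\[\Phi^*_A(\z)=\Tr_{\fld/A}(\Phi^*_\fld(w)),\]
where $\Tr_{\fld/A}$ is extended to $\overline{\fld}$ and $\Phi^*_A(\z)$ still lies in degree $0$. Hence $\Vol(\Phi^*_A(\z))=\psi(\Phi^*_\fld(w))=\psi(w)^{1/p}=\Vol(\z)^{1/p}$. Raising to the $p$-th power gives the theorem.

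Two points need checking. First, the Frobenius trace on $\fld$ defined via $G_\bullet$ must agree with the one used to normalize $\psi$; this is the consistency remark made just before the theorem. Second, the $\overline{\fld}$-semilinearity must be tracked, so that the scalar extension $\Tr\otimes\overline{\fld}$ really commutes with $\Phi^*$. The comparison-theorem homotopy is the main technical step, and I expect the bookkeeping of $\varphi^*$ versus $\varphi_*$ adjunction through it to be the chief obstacle.
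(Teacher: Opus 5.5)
Your proposal is correct and follows essentially the same route as the paper: you reduce to commutativity of $\Tr_{\fld/A}\circ\Phi^*_{\fld}=\Phi^*_A\circ\Tr_{\fld/A}$, and then to chain-homotopy commutativity of the square of resolutions, which holds because both composites $F_\bullet\to\varphi_*G_\bullet$ go from a free complex into an acyclic one and agree in degree $0$. The only differences are cosmetic. You apply the comparison theorem to the adjoint maps $\varphi^*F_\bullet\to G_\bullet$, whereas the paper applies it directly to $F_\bullet\to\varphi_*G_\bullet$ using exactness of $\varphi_*$. The paper also works over $\overline{R}$ from the outset, which removes the semilinearity bookkeeping you flag.
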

  
\begin{proof}
Let $\overline{R}\coloneqq R\otimes\overline{\fld}$ and $\overline{A}\coloneqq \overline{R}/\overline{I}$ where $\overline{I}\subset \overline{R}$ is the ideal generated by $I$.
We must show the following diagram commutes:
\[\xymatrix{
\varphi_*\omega_{\overline{\fld}}\ar[d]_{\Phi^*_{\overline{\fld}}}\ar[r]^{\varphi_*\Tr_{{\overline{k}}/\overline{A}}}&\varphi_*\omega_{\overline{A}}\ar[d]^{\Phi^*_{\overline{A}}}\\
\omega_{\overline{\fld}}\ar[r]_{\Tr_{{\overline{\fld}}/{\overline{A}}}}&\omega_{\overline{A}}
}\]
which demonstrates that $\Vol(\Phi_{\overline{A}}^*(\z))=\Vol(\z)^{1/p}$. 

Find minimal free resolutions $F_\bullet\to \overline{A}$ and $G_\bullet\to \overline{\fld}$ over $\overline{R}$. Construct chain maps $F_\bullet\to G_\bullet$, $F_\bullet\to\varphi_*F_\bullet$, and $G_\bullet\to\varphi_*G_\bullet$ lifting the map $\overline{A}\to \overline{\fld}$, and the Frobenius maps on $\overline{A}$ and $\overline{\fld}$, respectively. These maps together with the restriction of scalars $\varphi_*F_\bullet\to \varphi_*G_\bullet$ induce $\Tr_{\overline{\fld}/\overline{A}}, \varphi_*\Tr_{\overline{\fld}/\overline{A}}, \Phi^*_{\overline{A}},\text{ and }\Phi^*_{\overline{\fld}}$.
Now, we need only that the following diagram of complexes commutes up to chain homotopy
\[\xymatrix{
\varphi_*G_\bullet&\ar[l]\varphi_*F_\bullet\\
G_\bullet\ar[u]&\ar[l]F_\bullet.\ar[u]
}\]
This follows because the two compositions $F_\bullet\to\varphi_*G_\bullet$ map a free complex to an exact complex and agree for $\bullet=0$.
\end{proof}

\begin{remark}
The map $\Phi_\bullet$ is only unique up to chain homotopy. Thus, the behavior of $\Phi_A^*$ on $\omega_A$ is unique while its expression as a function on $\Hom(F_d,\omega_R)$ is not.
\end{remark}

\begin{example} \label{e:parsevalinpolynomialrings}
We can specialize the Parseval--Rayleigh identity to the case where $R\cong \fld[x_1,\dots,x_d]$. Then $\Phi^{\flat}_\bullet$ is a chain map 
\[\xymatrix{
0\ar[r] &F_d \ar[r]^{\varphi(M_d)}\ar[d]^{\Phi^{\flat}_d}&F_{d-1}\ar[r]^{\varphi(M_{d-1})}\ar[r]\ar[d]^{\Phi^{\flat}_{d-1}}&\cdots\ar[r]^{\varphi(M_1)}&F_0\ar[r]\ar[d]^{\Phi^{\flat}_0}&0\\
0\ar[r] &F_d \ar[r]^{M_d}&F_{d-1}\ar[r]^{M_{d-1}}\ar[r]&\cdots\ar[r]^{M_1}&F_0\ar[r]&0
}\]
where $\Phi^{\flat}_0$ is the identity. Now, 
\[\omega_R\cong R(-d),\quad \omega_A\cong \coker(F_{d-1}^\vee\to F_d^\vee)(-d).\]
Pick a basis $\{e_\alpha\}$ for $F_d$ and thus a dual basis $\{e_\alpha^\vee\}$, so we can define a  pairing between $F_d^\vee$ and $F_d$  by the Kronecker pairing
$\langle \x^\a e_{\alpha}^\vee,\x^{\mathbf{b}} e_{\beta}\rangle=\delta_{\a\mathbf{b}}\delta_{\alpha\beta}$.
For $\z\in \Hom(F_d,R)(-d)$, we have 
\[\langle \Phi_R^*(\z),\x^{\a}e_\alpha \rangle=\langle \z, (x_1\dots x_d)^{p-1} \x^{p\a}e_\alpha\rangle^{1/p}.\]

Then, we can express $\Phi_A^*\colon \varphi_*\omega_A\to \omega_A$ by for $\z\in \Hom(F_d,R)(-d)$ and a monomial $\x^{\a}\in \fld[x_1,\dots,x_d]$,
\[\langle \Phi_A^*(\z),\x^{\a} e_\alpha\rangle=\langle \z,(x_1\dots x_d)^{p-1}\x^{p\a}\Phi^{\flat}_d(e_{\alpha})\rangle^{1/p}.\]
Consequently, the Parseval--Rayleigh identity takes the form
\[\Vol(\z)=\sum_{\alpha,\a}\langle \Phi_d^*\z,(x_1\dots x_d)^{p-1}\x^{p\a}e_{\alpha}\rangle\Vol\left(\x^{\a}e_{\alpha}^\vee\right)^p\]
where $\Phi_d^*$ denotes the dual to $\Phi_d$.
\end{example}

\subsection{The Parseval--Rayleigh identity for Artinian reductions of Cohen--Macaulay quotients}

Given a codimension-$u$ Cohen--Macaulay ring $A\coloneqq R/I$ for $R\coloneqq \fld[x_1,\dots,x_n]$ with a minimal free resolution $F_\bullet\to A$ of length $u$, and homogeneous $\ell\in R$ descending to a nonzero divisor on $A$, we can write a free resolution of $A/(\ell)$ over $R$ and thus a Parseval--Rayleigh identity. Indeed, first consider the Koszul resolution of $R/(\ell)$,  $\KK_\bullet\coloneqq [R\xrightarrow{\ell\cdot} R]$ which has the following expression for $\Phi_{\KK_\bullet}^{\flat}\colon \varphi^*\KK_\bullet\to \KK_\bullet$:
\[\xymatrix{
R\ar[r]^{\ell^p\cdot}\ar[d]_{\ell^{p-1}\cdot}&R\ar[d]^{\operatorname{Id}}\\
R\ar[r]_{\ell\cdot}&R.
}\]
Consequently, $\Phi^\flat_{\KK_\bullet,1}\colon R\to R$ is multiplication by $\ell^{p-1}$. The total complex attached to the double complex $F_\bullet\otimes \KK_\bullet$ gives a free resolution of $A/(\ell)$ such that 
\[\Phi^{\flat}_{u+1}\colon \varphi^*(\Tot(F_\bullet\otimes \KK_\bullet)_{u+1})\cong \varphi^*(F_u\otimes \KK_1)\cong \varphi^*F_u\to F_u\cong F_u\otimes \KK_1\cong \Tot(F_\bullet\otimes \KK_\bullet)_{u+1}\]
is given by $\Phi^{\flat}_{F_\bullet,u}\otimes \ell^{p-1}$.
Similarly, by induction, if $(\ell_1,\dots,\ell_t)$ descends to a regular sequence in $A$, we can tensor together $t$ copies of the above complex to produce the Koszul complex $\KK_\bullet\to R/(\ell_1,\dots,\ell_t)$. Then, $\Phi^{\flat}_{u+t}\colon \varphi^*(\Tot(F_\bullet\otimes \KK_\bullet)_{u+t})\to \Tot(F_\bullet\otimes \KK_\bullet)_{u+t}$ is given by $\Phi^{\flat}_{F_\bullet,u}\otimes \ell_1^{p-1}\dots\ell_t^{p-1}$.

If $u+t=\dim R$, then $\AA\coloneqq A/\boldsymbol{\ell}\coloneqq A/(\ell_1,\dots,\ell_t)$ is Artinian with top degree $\max_j(a_j)+ \sum_i \deg \ell_i- n$ where $F_u = \bigoplus_j R(-a_j)$. The Parseval--Rayleigh identity becomes for $\z\in (\omega_{\AA})_0$,
\[\Vol(\z)=\Vol(\Phi_R(\ell_1^{p-1}\dots\ell_t^{p-1}\Phi_{F_\bullet,u}^*(\z)))^p.\]

Now, we can write the Parseval--Rayleigh identity for homogeneous complete intersections by using a regular system of parameters of degrees $(m_1,\dots,m_t)$ whose coefficients are indeterminates. Let $S_i\subset R^{m_i}$ be a finite set for $i=1,\dots,t$. We introduce the following field and homogeneous system:
\[\fld\coloneqq \bbF_p\left(\{\theta_{i,\y}\mid 1\leq i\leq t,\ \y\in S_i\}\right),\quad
\ell_i\coloneqq \sum_{\y\in S_i}\theta_{i,\y}\y. \]
We suppose that $(\ell_1,\dots,\ell_t)$ is a homogeneous regular system of parameters.
 

We will write the Parseval--Rayleigh identity using the notation of \cite{KLS}.
Consider functions $B\colon S_1\sqcup\dots\sqcup S_t\to \Z_{\geq 0}$
to index monomials in the expansion of $\ell_1^{p-1}\dots \ell_t^{p-1}$.
Write
\[\theta^B=\prod_{i}\prod_{\y\in S_i} \theta_{i,\y}^{B(\y)},\ B!=\prod_{i}\prod_{\y\in S_i} B(\y)!.\]
Let $\cB$ be the set of functions $B$ whose values on each $S_i$ sum to $p-1$, i.e.,
\[\cB\coloneqq \left\{B\colon S_1\sqcup\dots \sqcup S_t\to \Z_{\geq 0}\ {\Bigg|}\ \sum_{\y\in S_i} B(\y)=p-1\text{ for all i}\right\}.\]
Write $\y^\cB\coloneqq \prod_i \prod_{\y\in S_i} \y^{B(\y)}$.

\begin{prop} \label{p:parsevalartinian} For any $\z\in (\omega_{A/(\boldsymbol{\ell})})_0$,
\[\Vol\left(\z\right)=(-1)^{t}\sum_{B\in\cB} \Vol\left(\Phi_R(\y^B\Phi^*_{F_\bullet,u}(\z))\right)^p \frac{\theta^B}{B!}.\]
\end{prop}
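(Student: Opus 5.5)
We start from the identity displayed just before the statement, obtained by specializing Theorem~\ref{t:abstractpr} to the Koszul-extended resolution $\Tot(F_\bullet\otimes\KK_\bullet)$:
\[\Vol(\z)=\Vol\bigl(\Phi_R(\ell_1^{p-1}\cdots\ell_t^{p-1}\Phi^*_{F_\bullet,u}(\z))\bigr)^p .\]
The plan is to expand $\ell_1^{p-1}\cdots\ell_t^{p-1}$ by the multinomial theorem and pull the coefficients out of the Frobenius trace. This turns a single $p$-th power into a $\theta$-weighted sum of $p$-th powers.

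\textbf{Step 1: expand the product.} For each $i$, the multinomial theorem over $\bbF_p$ gives
\[\ell_i^{p-1}=\sum_{B_i}\frac{(p-1)!}{B_i!}\prod_{\y\in S_i}\theta_{i,\y}^{B_i(\y)}\y^{B_i(\y)},\]
where $B_i$ ranges over functions $S_i\to\Z_{\geq0}$ with sum $p-1$. All multinomial coefficients here are units, since every factorial involved is $<p$. By Wilson's theorem $(p-1)!=-1$ in $\bbF_p$. Taking the product over $i$ gives
\[\ell_1^{p-1}\cdots\ell_t^{p-1}=(-1)^t\sum_{B\in\cB}\frac{\theta^B}{B!}\,\y^B .\]

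\textbf{Step 2: pull coefficients out of $\Phi_R$.} The trace $\Phi_R$ is additive. It is $p^{-1}$-semilinear in scalars, $\Phi_R(c\,w)=c^{1/p}\Phi_R(w)$ for $c\in\fld$, computed in $\overline{\fld}$. This follows from $\Phi_{\overline R}=\varphi_{\overline{\fld}}^{-1}\circ(\Phi_R\otimes\overline{\fld})$, or directly from the formula in Example~\ref{e:polyring}. Hence
\[\Phi_R\bigl(\ell_1^{p-1}\cdots\ell_t^{p-1}w\bigr)=\sum_{B\in\cB}\Bigl((-1)^t\frac{\theta^B}{B!}\Bigr)^{1/p}\Phi_R(\y^Bw),\]
with $w=\Phi^*_{F_\bullet,u}(\z)$.

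\textbf{Step 3: apply $\Vol$ and raise to the $p$-th power.} The map $\Vol$ is $\overline{\fld}$-linear after base change and additive. Raising to the $p$-th power is additive in characteristic $p$ and undoes the $1/p$ exponents. Together these give
\[\Vol(\z)=(-1)^t\sum_{B\in\cB}\Vol\bigl(\Phi_R(\y^B\Phi^*_{F_\bullet,u}(\z))\bigr)^p\frac{\theta^B}{B!},\]
which is the claim.

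\textbf{Main obstacle: bookkeeping of fields.} Two points need care.
\begin{itemize}
\item The trace lands in $\omega\otimes\overline{\fld}$, so we must check that the semilinearity convention matches the Frobenius-equivariant normalization of $\psi$ fixed before Theorem~\ref{t:abstractpr}.
\item The terms $\Phi_R(\y^B w)$ are well defined on $(\omega_{\AA})_0$ only after summing over $B$. Individually they are computed on representatives in $\Hom(F_{u+t},\omega_R)$. So we fix a representative and a chain lift $\Phi^\flat$ once, and apply the identity at the level of that representative.
\end{itemize}
Because the remaining coefficients $\theta^B/B!$ and $(-1)^t$ lie in $\bbF_p(\theta)$, the final identity holds in $\fld$ even though the intermediate steps pass through $\overline{\fld}$.
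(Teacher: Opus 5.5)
Your proposal is correct and follows essentially the same route as the paper's proof: expand each $\ell_i^{p-1}$ by the multinomial theorem with Wilson's theorem, then pull the coefficients $(\theta^B/B!)^{1/p}$ out of the $p^{-1}$-semilinear trace and use additivity of the $p$-th power. Your remark that the individual summands are only meaningful for a fixed representative and a fixed chain lift $\Phi^\flat$ is a fair point that the paper leaves implicit.
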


\begin{proof}
Observe
\[\ell_i^{p-1}=\sum_{\mathbf{b}}
(p-1)!\prod_{\mathbf{y}\in S_i}
 \frac{(\theta_{i,\y}\y)^{\mathbf{b}(\y)}}{\mathbf{b}(\y)!}
=-\sum_{\mathbf{b}}
\prod_{\mathbf{y}\in S_i}
 \frac{(\theta_{i,\y}\y)^{\mathbf{b}(\y)}}{\mathbf{b}(\y)!}
\]
where the sum is over $\mathbf{b}\colon  S_i\to \Z_{\geq 0}$ with $\sum_{\y\in S_i} b(\y)=p-1$, and
 we used Wilson's theorem to write $(p-1)!=-1$.
Consequently, 
\[\ell_1^{p-1}\dots \ell_t^{p-1}=(-1)^t\sum_{B\in\cB}\y^{\overline{\mathbf{B}}}\frac{\theta^B}{B!},\]
and thus,
\begin{multline*}
\Vol\left(\z\right)=\Vol\left(\Phi_{A/\boldsymbol{\ell}}^*(\z)\right)^p=\Vol\left(\Phi_R^*(\ell_1^{p-1}\dots \ell_t^{p-1}\Phi^*_{F_\bullet,u}(\z))\right)^p\\
=(-1)^{t}\sum_{B\in\cB} \Vol\left(\Phi_R^*\left(\y^B\Phi^*_{F_\bullet,u}(\z)\right)\left(\frac{\theta^B}{B!}\right)^{1/p}\right)^p.
\qedhere
\end{multline*}

\end{proof}

%

The following is immediate after recognizing that $p$-powers are constants over differential fields of characteristic $p$:

\begin{corollary} \label{c:differentialidentity} Write $\partial_{\theta_i,\y}$ for the derivative with respect to $\theta_{i,\y}$ for $\y\in S_i$. For $B\in\cB$, set $\partial^B\coloneqq\prod_{i}\prod_{\y\in S_i} \partial_{\theta_{i,\y}}^{B(\y)}.$
Then,
$\partial^B\Vol\left(\z\right)=(-1)^{t}\Vol\left(\Phi^*_R(\y^B\Phi^*_{F_\bullet,u}(\z))\right)^p.$
\end{corollary}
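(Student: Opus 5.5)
We derive the corollary directly from Proposition~\ref{p:parsevalartinian}, treating the $\theta_{i,\y}$ as independent variables. The derivations $\partial_{\theta_{i,\y}}$ act on $\fld=\bbF_p(\{\theta_{i,\y}\})$ and on its perfect closure. Every $p$-th power is a constant for them, since $\partial(f^p)=pf^{p-1}\partial f=0$.

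Expand the identity of Proposition~\ref{p:parsevalartinian} as
\[\Vol(\z)=(-1)^t\sum_{B'\in\cB} c_{B'}^p\,\frac{\theta^{B'}}{B'!},\qquad c_{B'}\coloneqq \Vol\left(\Phi_R^*(\y^{B'}\Phi^*_{F_\bullet,u}(\z))\right).\]
Each $c_{B'}^p$ lies in $\fld$, because the whole expression came from $\fld$-rational data. Each $c_{B'}^p$ is also a $p$-th power, so it is killed by every $\partial_{\theta_{i,\y}}$.

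Apply $\partial^B$ for a fixed $B\in\cB$. By the Leibniz rule, $\partial^B$ passes through the constants $c_{B'}^p$ and acts only on the monomials $\theta^{B'}/B'!$. Every exponent $B'(\y)$ is at most $p-1$, so the monomials are differentiated without any characteristic-$p$ pathology. We get
\[\partial^B\frac{\theta^{B'}}{B'!}=\prod_{i,\y}\frac{\partial_{\theta_{i,\y}}^{B(\y)}\theta_{i,\y}^{B'(\y)}}{B'(\y)!}.\]
This vanishes unless $B(\y)\le B'(\y)$ for all $\y$. Both $B$ and $B'$ sum to $p-1$ on each $S_i$, so that condition forces $B'=B$. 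When $B'=B$, the product equals $\prod B(\y)!/B(\y)!=1$. Hence only the term $B'=B$ survives, and
\[\partial^B\Vol(\z)=(-1)^t c_B^p,\]
which is the claim.

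The only point needing care is that $\partial^B$ commutes past the coefficients $c_{B'}^p$. These are $p$-th powers of elements of $\overline{\fld}$ or the perfect closure that lie in $\fld$. The derivations extend uniquely to the perfect closure and vanish on $p$-th powers there, so the coefficients are genuinely constants. The factorials $B(\y)!$ are nonzero mod $p$ since $B(\y)\le p-1$, so all divisions are legitimate.
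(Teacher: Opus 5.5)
\textbf{Verdict: there is a genuine gap.} Your strategy is the same as the paper's. You differentiate the identity of Proposition~\ref{p:parsevalartinian}, treat the coefficients as constants, and use $\partial^B(\theta^{B'}/B'!)=\delta_{B,B'}$. That combinatorial part is correct.

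The gap is the step claiming the coefficients $c_{B'}^p$ are constants. Because $\Phi^*_R$ is $p^{-1}$-linear, $c_{B'}$ in general lies only in $\fld^{1/p}$, not in $\fld$. Knowing that $c_{B'}^p\in\fld$ is the $p$-th power of an element of the perfect closure tells you nothing: every element of $\fld$ has that property, including $\theta_{i,\y}$ itself. Your justification that the derivations extend to the perfect closure and vanish on $p$-th powers there is false. An extension of $\partial_\theta$ to $\fld^{1/p}$ would force $1=\partial_\theta\theta=\partial_\theta\bigl((\theta^{1/p})^p\bigr)=0$; indeed every derivation of a perfect field of characteristic $p$ is zero. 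If your argument were valid, it would also show that $\partial^B$ kills $\theta^{B'}/B'!$, so every derivative would vanish.

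In fact the identity is false without a hypothesis on $\z$. Take $n=t=1$, $u=0$, $I=0$, $\fld=\bbF_p(\theta)$, $R=\fld[x]$, $S_1=\{x\}$, $\ell_1=\theta x$. Then $(\omega_{\AA})_0=\fld\cdot x$ and $\Vol(x)=c\,\theta^{-1}$ with $c\in\bbF_p^*$. This is consistent with Proposition~\ref{p:parsevalartinian}, which here reads $\Vol(x)=\theta^{p-1}\Vol(x)^p$. Take $\z=\theta x$. Then $\partial_\theta^{p-1}\Vol(\z)=\partial_\theta^{p-1}(c)=0$. On the other hand, using $\Phi^*_R(x^p)=x$ from Example~\ref{e:polyring},
\[(-1)^t\Vol\bigl(\Phi^*_R(x^{p-1}\cdot\theta x)\bigr)^p=-\Vol\bigl(\theta^{1/p}x\bigr)^p=-c\,\theta^{1-p}\neq 0.\]
For $\z=x$, by contrast, both sides equal $-c\,\theta^{-p}$.

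What you need is $c_{B'}\in\fld$, so that $c_{B'}^p\in\fld^p$ is killed by every derivation of $\fld$. This holds when $\z$ is represented with coefficients in $\fld^p$ (for instance in $\bbF_p$): $\Phi^\flat_\bullet$ is defined over $\bbF_p$, and $\Phi^*_R(a^p m)=a\,\Phi^*_R(m)$. This is the hypothesis tacitly behind the paper's one-line proof, since ``$p$-th powers are constants'' refers to $p$-th powers of elements of $\fld$. It is satisfied where the corollary is actually used, in Proposition~\ref{p:anisotropyquotient}, with $\z=x_{j_1}\cdots x_{j_k}\x'\w^p\z_0$. Once you add this hypothesis, your computation goes through verbatim.
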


When $I=(0)$, $R=A$, $u=0$, and then $\Phi_{F_\bullet,0}=\operatorname{Id}$.

\subsection{A Cohen--Macaulay example}

In this section, we consider the Parseval--Rayleigh identity for a non-Gorenstein ring.
Consider the  standard graded polynomial 
ring  $R'\coloneqq \bbF_p[x_1, x_3, x_3]$ and the codimension 2 homogeneous  Cohen--Macaulay ideal 
\[
    I'  \coloneqq  (x_2x_3,x_1x_3,x_1x_2)  \;  \subset R'.
\]
Let $\fld\coloneqq \bbF_p(a,b,c)$ and set $R\coloneqq \fld[x_1,x_2,x_3]$ and 
$\ell\coloneqq ax_1+bx_2+cx_3$.
Set $A'\coloneqq R'/I'$ and $A\coloneqq R/I$ where $I$ is the ideal of $R$ generated by $I'$ under the inclusion $R'\hookrightarrow R$. The generic Artinian reduction of $A$ is 
$\AA\coloneqq A/(\ell)$.
The Hilbert function of $\AA$ is 
\[
    0 \mapsto 1,  \;  \;  \;   1 \mapsto 2,  \;   \;  \;   i \mapsto 0  \; \;   \;   \text { otherwise.} 
\]
Hence, the top degree $s$ of   $\AA$ is equal to $1$.  
%
By studying the minimal free resolution, we see that $\omega_{\AA}$ is generated, as an $R$-module,  by two elements 
$e_1,e_2$ of degree $-1$, with relations
\[
         x_3  e_1 = 0,  \quad    -x_2 e_1 + x_2 e_2 = 0,    \quad 
            x_1 e_2 = 0,   \quad     ax_1e_1 +b x_2e_1 = 0,  \quad  b x_2  e_2 + c x_3  e_2 = 0.
\]
We fix the volume map by setting  $\Vol(z_0)=1$ where $z_0\coloneqq c x_3 e_2 \in \omega_{\AA}$, hence 
   \[  \begin{matrix}
     \Vol (x_1e_1) = 1/a,   &      \Vol (x_2e_1) = -1/b,   &
                              \Vol (x_3e_1) = 0,   \\
     \Vol (x_1e_2) =  0,   &      \Vol (x_2e_2) = -1/b,   &
                              \Vol (x_3e_2) = 1/c.
       \end{matrix}                       
 \]
     
 By the above
 computations, we get the following identities of the same shape as the Parseval--Rayleigh identities:
 \begin{alignat*}{4}
           \Vol (x_1e_1)& =  a^{p-1} (\Vol (x_1e_1))^p,   && 
           \Vol (x_2e_1) &&=  (-b)^{p-1} (\Vol (x_2e_1))^p,   \\           
           \Vol (x_3e_1) &=  c^{p-1}(\Vol (x_3e_1))^p,   && 
           \Vol (x_1e_2) &&=   a^{p-1} (\Vol (x_1e_2))^p,       \\       
           \Vol (x_2e_2) &=   (-b)^{p-1}(\Vol (x_2e_2))^p,     \quad&&           \Vol (x_3e_2) &&=   c^{p-1}  (\Vol (x_3e_2))^p.
 \end{alignat*}
 
 We consider the minimal graded free resolution 
     \[
             F_\bullet:  \quad  0  \to R(-4)^2\to R(-3)^{5} 
                            \to  R(-2)^3\oplus R(-1)  \to  R
     \]
           of  $A$   as  $R$-module, and the $p$-th Frobenius power 
     \[
             \varphi^*F_\bullet:  \quad  0  \to R(-4p)^2\to R(-3p)^{5} 
                            \to  R(-2p)^3\oplus R(-p)  \to  R
     \]

    Then there exists 
           a chain map $\Phi^\flat_\bullet\colon \varphi^*F_\bullet\to F_\bullet$ 
    such that $\Phi^\flat_0$ is the identity map and $\Phi^{\flat}_3$
    can be represented by the matrix 
 \[
     G = \begin{pmatrix}  (x_1x_2x_3)^{p-1} (ax_1+bx_2+cx_3)^{p-1} & 0  \\ 
                   0 & (x_1x_2x_3)^{p-1} (ax_1+bx_2+cx_3)^{p-1}  \end{pmatrix}.
  \]

Hence, the following formula given by Theorem~\ref{t:abstractpr} holds
for all  $i \in \{1,2 \}$ and   $w \in \{x_1,x_2,x_3\}$
\[
    \Vol( we_i ) = \sum_{j=1}^2 \sum_{u=1}^3
      \langle Gwe_i,(x_1x_2x_3)^{p-1}x_u^p e_j\rangle (\Vol(x_ue_j))^p 
\]
which immediately implies the above Parseval--Rayleigh-like identities.

\begin{remark} Related Macaulay2 code is contained in the file  \cite{AKOPP51}.
\end{remark}

\section{The Parseval--Rayleigh identity, Anisotropy, and the Lefschetz property} \label{s:parsevalimpliesanisotropy}

We now extract some minimal conditions needed to establish anisotropy from the Parseval--Rayleigh identities. Our technique follows an argument that appears in \cite{KLS} and \cite{AOPP:complete}.

\subsection{Quotients of Polynomial Rings} \label{ss:quotientsofpolynomialrings}

We will give conditions for anisotropy and Hard Lefschetz on the Artinian reduction of Gorenstein quotients of polynomial rings.

Write $R'=\bbF_p[x_1,\dots,x_n]$, and let $A'\coloneqq R'/I'$ be a $d$-dimensional  Gorenstein quotient, so there is a minimal free resolution $F_\bullet\to A'$ over $R'$ of length $u\coloneqq n-d$ such that $F_{n-d}\cong R'(-c)$ for some $c\in \Z_{\geq 0}$. Construct a chain map $\Phi_\bullet^\flat\colon\varphi^*F_\bullet\to F_\bullet$, and write $H_0\colon \varphi^*F_{n-d}\to F_{n-d}$, considered as an element $H_0\in (R')^{(p-1)c}$. We call $H_0$ a {\em Parseval core}. We consider the support $\supp(H_0)$ as a subset of $\Z_{\geq 0}^n$. Set $s\coloneqq c+d-n$.

For $S\subseteq \Z_{\geq 0}^n$ and an integer $q$, we define 
$S_{\geq q}\coloneqq \{(m_1,\dots m_n)\in S\mid m_1+\dots+m_n\geq q\}.$
\begin{definition}
 The Parseval core $H_0$ is {\em $p$-conducting} if 
for any $\mathbf{a} \in \Z_{\geq 0}^n$ of total degree at least $(p-1)s$, there exists $\mathbf{m} \in \supp(H_0)$ such that:
\begin{enumerate}
    \item \label{i:smallermonomial} $\mathbf{a} + (p-1)\mathbf{1} - \mathbf{m} \in \Z_{\geq 0}^n$, and
    \item \label{i:paritynonoverlapping} if $\mathbf{m}' \in \supp(H_0)$ satisfies $\mathbf{m} - \mathbf{m}' \in p\Z^n$, then $\mathbf{m} = \mathbf{m}'$.
\end{enumerate}
\end{definition}

\begin{remark}
  It has been our practice to pick representatives of 
  \[\omega_{A'}\cong \coker(\Hom(F_{u-1},\omega_{R'})\to \Hom(F_u,\omega_{R'}))\] 
  in $\Hom(F_u,\omega_R)$. Once we pick an isomorphism $F_u\cong R'$, we obtain isomorphism 
  \[\Hom(F_u,\omega_{R'})\cong \omega_{R'}\cong \x^{\mathbf{1}} R'.\]
   Write $\z_0$ for the image of $\x^{\mathbf{1}}$ under the surjection $\x^{\mathbf{1}}R'\to \omega_{A'}$.  
\end{remark}

Set
\[
\fld\coloneqq \bbF_p\left(\{\theta_{i,j}\mid 1\leq i\leq d,\ 1\leq j\leq n\}\cup \{\theta_j\mid 1\leq j\leq n\}\right),\quad
\ell_i\coloneqq \sum_j \theta_{i,j}x_j,\quad 
\ell\coloneqq \sum_j \theta_j x_j.
\]
Now $(\ell_1,\dots,\ell_d)$ is a regular linear system of parameters for $R\coloneqq R'\otimes_{\bbF_p} \fld$. Indeed, for $0\leq h\leq d$, set
\[\fld_h\coloneqq \bbF_p\left(\{\theta_{i,j}\mid 1\leq i\leq h,\ 1\leq j\leq n\}\cup \{\theta_j\mid 1\leq j \leq n\}\right).\]
Because each $\theta_{i,j}$ is transcendental over $k_{i-1}$, $\ell_i$ avoids the associated primes of $(R'\otimes \fld_{i-1})/(\ell_1,\dots,\ell_{i-1})$, and thus of $(R'\otimes \fld_i)/(\ell_1,\dots,\ell_{i-1})$. Hence, $\ell_i$ is not a zero divisor by the discussions in  \cite[1.5.10--1.5.12]{BrunsHerzog}.  

Set $R\coloneqq R'\otimes \fld$ and $A\coloneqq R/I$ where $I$ is the ideal of $R$ generated by $I'$. Let $\AA\coloneqq A/(\ell_1,\dots,\ell_d)$ so that $\omega_{\AA}\cong\omega_{A}/(\ell_1,\dots,\ell_d)$. Write $s$ for the socle degree of $\AA$.

\begin{prop} \label{p:anisotropyquotient} Suppose $R/I$ satisfies the above hypotheses with a $p$-conducting Parseval core $H_0$. Let $\w\in A^i$ be nonzero. 
Let $k\in\Z_{\geq 0}$ satisfy $pi+k\leq s$. Then, $\ell^k \w^p\neq 0$ in $\AA$. 
\end{prop}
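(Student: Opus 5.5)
The plan is to detect the nonvanishing of $\ell^k\w^p$ through the volume pairing, and then use the Parseval--Rayleigh identity to turn a pairing against $\w^p$ into a $p$-th power of a pairing against $\w$.

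\textbf{Step 1: reduce to a volume.} Since $A'$ is Gorenstein, $\AA$ is Gorenstein with socle degree $s$ and $\omega_{\AA}\cong \AA(s)$, generated by the image of $\z_0$. So $\ell^k\w^p\neq 0$ if and only if there is a monomial $\x^{\mathbf{b}}$ of degree $s-pi-k$ with
\[\Vol\left(\x^{\mathbf{b}}\ell^k\w^p\z_0\right)\neq 0 .\]
We may assume, after clearing denominators and rescaling, that $\w$ is represented by a polynomial with coefficients in $\fld$. Fix the nonzero class $\w$. By Gorenstein duality again, there is a monomial $\x^{\mathbf{c}}$ of degree $s-i$ with $\Vol(\x^{\mathbf{c}}\w\z_0)\neq 0$.

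\textbf{Step 2: apply the Parseval--Rayleigh identity.} Apply Proposition~\ref{p:parsevalartinian}, or equivalently Corollary~\ref{c:differentialidentity}, to $\z=\x^{\mathbf{b}}\ell^k\w^p\z_0$. For $A$ Gorenstein, $\Phi^*_{F_\bullet,u}$ is multiplication by the Parseval core $H_0$. The Frobenius trace $\Phi^*_R$ of Example~\ref{e:polyring} is $p^{-1}$-semilinear, so the factor $\w^p$ comes out as $\w$ with $p$-th-rooted coefficients. We also expand $\ell^k=\sum \binom{k}{\mathbf{e}}\theta^{\mathbf{e}}\x^{\mathbf{e}}$. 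The result is
\[\Vol(\x^{\mathbf{b}}\ell^k\w^p\z_0)=(-1)^d\sum_{B,\mathbf{e},\mathbf{m}} \pm\binom{k}{\mathbf{e}}\frac{\theta^B\theta^{\mathbf{e}}}{B!}\,h_{\mathbf{m}}\;\Vol\!\left(\w\,\x^{\frac{\mathbf{b}+\mathbf{e}+\overline{B}+\mathbf{m}+\mathbf{1}}{p}-\mathbf{1}}\z_0\right)^p .\]
Here $h_{\mathbf{m}}$ are the coefficients of $H_0$. Only those terms survive in which $p$ divides the exponent $\mathbf{b}+\mathbf{e}+\overline{B}+\mathbf{m}+\mathbf{1}$.

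\textbf{Step 3: choose the test monomial and isolate one term.} Choose $\mathbf{b}$ and the splitting so that the surviving term we want has inner monomial exactly $\x^{\mathbf{c}}$. Concretely, we want $\mathbf{b}+\mathbf{e}+\overline{B}+\mathbf{m}=p\mathbf{c}+(p-1)\mathbf{1}$, where $\overline{B}$ has total degree $d(p-1)$ and $\mathbf{e}$ has degree $k$. Degree counting makes this consistent, since $\deg\mathbf{m}=(p-1)c$ and $s=c+d-n$. Condition~(\ref{i:smallermonomial}) of $p$-conductivity, applied to a suitable $\mathbf{a}$ of degree at least $(p-1)s$ built from $p\mathbf{c}$ minus the $\theta$-part, provides an $\mathbf{m}\in\supp(H_0)$ for which the remaining exponent $\mathbf{a}+(p-1)\mathbf{1}-\mathbf{m}$ is nonnegative. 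That remainder can then be distributed among $\mathbf{b}$, $\mathbf{e}$ and $\overline{B}$, using that the $\ell_i$ involve all variables generically.

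To isolate this term, apply the differential operator $\partial^B\partial_\theta^{\mathbf{e}}$ (as in Corollary~\ref{c:differentialidentity}) and use that $p$-th powers are $\theta$-constants. Equivalently, compare the coefficient of $\theta^B\theta^{\mathbf{e}}$ in the $\fld^p$-basis of monomials $\theta^{\mathbf{f}}$ with $0\le \mathbf{f}<p$. This leaves a sum over those $\mathbf{m}'\in\supp(H_0)$ whose resulting monomial is congruent to ours modulo $p$. By condition~(\ref{i:paritynonoverlapping}) the only such $\mathbf{m}'$ is $\mathbf{m}$ itself. Hence that coefficient is a nonzero constant times $\Vol(\x^{\mathbf{c}}\w\z_0)^p\neq 0$, and so $\ell^k\w^p\neq0$.

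\textbf{Main obstacle.} The hardest part is the exponent bookkeeping in Step 3. One must check that the $\theta$-exponents $B$ and $\mathbf{e}$ can be taken below $p$, so that the chosen coefficient is genuinely a coefficient in the $\fld^p$-basis. One must also check that distinct pairs $(\mathbf{e},B,\mathbf{m}')$ giving the same $\theta$-monomial are ruled out, either by $p$-conductivity or by the binomial and multinomial coefficients being units. If a direct single choice fails, I would first fix $\mathbf{m}$ from $p$-conductivity and then choose $\overline{B}$ and $\mathbf{e}$ greedily coordinatewise, letting $\mathbf{b}$ absorb the rest.
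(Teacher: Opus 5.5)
For $0\le k\le p-1$ your argument is correct and is essentially the paper's. Take $\mathbf{a}=p\mathbf{c}$ in condition~(\ref{i:smallermonomial}); its degree $p(s-i)$ is at least $(p-1)s$ exactly because $pi\le s$. Split $p\mathbf{c}+(p-1)\mathbf{1}-\mathbf{m}$ as $\overline{B}+\mathbf{e}+\mathbf{b}$, apply $\partial^B\partial^{\mathbf{e}}$, and use condition~(\ref{i:paritynonoverlapping}) to discard every $\mathbf{m}'\neq\mathbf{m}$. What remains is the unit $\pm k!\,h_{\mathbf{m}}$ times $\Vol(\x^{\mathbf{c}}\w\z_0)^p$.

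\textbf{The gap: $k\ge p$.} Here the single-extraction strategy cannot work, and this is precisely the obstacle you flagged without resolving. If $|\mathbf{e}|=k\ge p$ and $0\le\mathbf{e}<p$ componentwise, then $\binom{k}{\mathbf{e}}=k!/\prod_j e_j!\equiv 0\pmod p$, since $p\mid k!$ while $p\nmid e_j!$. Equivalently, $\partial^{\mathbf{e}}\ell^k=k!\,\x^{\mathbf{e}}=0$. So every coefficient you propose to read off vanishes, whatever greedy choice of $\overline{B},\mathbf{e},\mathbf{b}$ you make. If instead you allow some $e_j\ge p$, then $\theta^{\mathbf{e}}$ is no longer a basis monomial over $\fld^p$. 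Write $k=pb+k'$ with $0\le k'\le p-1$, so $\ell^k=\ell^{k'}(\ell^b)^p$ and the factor $(\ell^b)^p$ is absorbed into the $\fld^p$-coefficients. The coefficient of $\theta^B\theta^{\mathbf{e}'}$ (with $|\mathbf{e}'|=k'$) is then a unit times $\Vol(\Phi_R^*(\x^{\overline{B}+\mathbf{b}+\mathbf{e}'}H_0(\ell^b\w)^p\z_0))^p$. Showing this is nonzero requires $\ell^b\w\neq 0$, a Lefschetz-type statement that $p$-conductivity does not supply. You cannot reach $\Vol(\x^{\mathbf{c}}\w\z_0)$ directly. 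This case matters: for $p=2$ the application needs $k=s-2i$, which is usually at least $2$.

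\textbf{The missing idea: reduce to $k\le p-1$ by a minimal-counterexample argument, as the paper does.} First prove the small case for every nonzero class of $\AA$, including classes whose coefficients involve the $\theta_j$. Your argument already allows this, because $\w^p$ has coefficients in $\fld^p$. Now let $k\le s-pi$ be minimal with $\ell^k\w^p=0$; necessarily $k\ge p$. Write $k=pb+k'$ with $0\le k'\le p-1$, so that $0=\ell^{k'}(\ell^b\w)^p$. Since $p(i+b)+k'=pi+k\le s$, the small case applied to $\ell^b\w\in\AA^{i+b}$ forces $\ell^b\w=0$. Hence $\ell^b\w^p=0$ with $1\le b<k$, contradicting minimality.
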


\begin{proof}
Let $\z\in R\cong \Hom(F_u,\omega_R)$ descend to a nonzero element of $(\omega_{\AA})_{i-s}$ for $i \leq \frac{s}{p}$. Write $\z=\w\z_0$ for $\w\in R$. 
We will apply the Parseval--Rayleigh identity, 
\[\Vol(\z)=\Vol(\Phi^*_R(\ell_1^{p-1}\dots\ell_d^{p-1}H_0\z))^p.\]

  We first prove this for $k\leq p-1$.
  Since $\w\z_0\neq 0$, by duality between $\AA$ and $\omega_{\AA}$, there is $\t\in R^{s-i}$  such that $\t\w\z_0\neq 0$ in $(\omega_A)_0$.  We can suppose $\t$ is a monomial in $R'$. 
  Similarly, we can find
\begin{enumerate}
\item A monomial $\h$ in $H_0$,
\item $i_{q,r}\in [n]$ for $q=1,\dots,d$ and $r=1,\dots,p-1$
\item $j_q\in [n]$, for $q=1,\dots,k$, and
\item  A monomial $\x'\in R^{s-pi-k}$ such that
\end{enumerate}
\[(x_{i_{1,1}}\dots x_{i_{1,p-1}})\dots(x_{i_{d,1}}\dots x_{i_{d,p-1}})x_{j_1}\dots x_{j_k}\h\x'=c\x^{(p-1)\mathbf{1}}\t^p\]
for some $c\in \bbF_p^*$. Indeed by the $p$-conducting, there is a monomial $\h$ such that $p\deg(\t)+(p-1)\mathbf{1}-\deg(\h)\in\Z_{\geq 0}^n$. It is straightforward to make the other choices.
  
Let $S=S_1=\dots=S_d=\{x_1,\dots,x_n\}$. There exists $B\in \cB$ and $b\colon S\to \Z_{\geq 0}$ with $\sum_{\y'\in S} b(\y')=k$ such that
\[\x^B=(x_{i_{1,1}}\dots x_{i_{1,p-1}})\dots(x_{i_{d,1}}\dots x_{i_{d,p-1}}),\quad \prod_{\x\in S} \x^{b(\y)}=x_{j_1}\dots x_{j_k}.\]
  
  Write $\partial^b\coloneqq \prod_{\y\in S} \partial_{\theta_{\y}}^{b(\y)}$, $b!\coloneqq \prod_{\y\in S} b(\y)!$, so
\[  \partial^B\partial^b\Vol(\ell^k\x'\w^pz_0)=b!\partial^B\Vol(x_{j_1}\dots x_{j_k}\x'\w^p\z_0)\\=b!\Vol(\Phi_R^*(H_0\x^Bx_{j_1}\dots x_{j_k}\x'\w^p\z_0))^p\]
Now, only the monomial $\h$ in $H_0$ contributes to the volume since if another monomial $\h'$ did, then the difference of their fine degrees would be divisible by $p$. Hence, this quantity equals
 \[b!\Vol(\Phi_R^*(c\x^{(p-1)\mathbf{1}}\t^p\w^p\z_0))^p=c b!\Vol(\t\w \z_0)^p\neq 0\]
where we used $k\leq p-1$ to conclude $b(\y)!\neq 0$, and hence $b!\neq 0$.
  
For the general case, suppose by way of contradiction that $k\leq s-pi$  is the minimal value for which $\ell^k\w^p=0$. By the division algorithm, write $k=pb+k'$ with $0\leq k'\leq p-1$. Then,
\[0=\ell^k\w^p=\ell^{k'}(\ell^b\w)^p.\]
From this we conclude $\ell^b\w=0$ contradicting $\ell^b\w^p\neq 0$. 
\end{proof}

For $p=2$, we immediately conclude that  for $\w\in \AA^i$  nonzero, we have $\ell^{s-2i}\w^2\neq 0$. Therefore, $\ell^{s-2i}\w\neq 0$.

\subsection{Artinian reductions of almost Gorenstein rings}

In this section, we will prove that in some cases, the Parseval--Rayleigh identity implies an anisotropy statement, which when $p=2$ implies the generic Hard Lefschetz theorem in the Gorenstein setting and a weaker Lefschetz theorem in the Cohen--Macaulay setting. We will have to put the following conditions on $R'$:
\begin{enumerate}
\item $R'$ is a $d$-dimensional graded Cohen--Macaulay $\bbF_p$-algebra, generated in degree $1$,
\item the canonical module $\omega_{R'}$ is isomorphic to an ideal in $R'$, and
\item under the ring structure on $\omega_{R'}$ induced by the inclusion $\omega_{R'}\hookrightarrow R'$ as above, $\Phi_{R'}^*(\z^p)=\z$ for any $\z\in \omega_{R'}$.
\end{enumerate}

\begin{remark} The last two conditions are quite strong. They are related to a ring being generically Gorenstein \cite[Proposition~3.3.18]{BrunsHerzog} but they force $\omega_R$ to sit inside $R$ in a very specific way. When $R'$ is the Stanley--Reisner ring of a complex $\Sigma$, it constrains the combinatorics. 
\end{remark}

Let $S\subset ({R'})^1$ be a finite spanning set for $(R')^1$. Let $m_1,\dots,m_d$ be positive integers. For each $i$, write $S_i\coloneqq S^{\cdot m_i}\subset (R')^{m_i}$ be the set of all elements that can be written as a product of $m_i$ elements of $S$. Set
\[
\fld\coloneqq \bbF_p\left(\{\theta_{i,\y}\mid 1\leq i\leq d,\ \y\in S_i\}\cup \{\theta_\y\mid \y\in S\}\right),\quad
\ell_i\coloneqq \sum_{\y\in S_{m_i}}\theta_{i,\y}\y,\quad
\ell\coloneqq \sum_{\y\in S}\theta_{\y}\y,
\]
so $(\ell_1,\dots,\ell_d)$ is a regular homogeneous system of parameters for $R\coloneqq R'\otimes_{\bbF_p} \fld$ as above.

Set $A\coloneqq R/(\ell_1,\dots,\ell_d)$ so that $\omega_A\cong\omega_{R}/(\ell_1,\dots,\ell_d)$. Write $s$ for the top degree of $A$, so $\omega_A$ is graded in degrees $-s,\dots,0$. The following appeared in \cite{AOPP:complete}. Its proof is similar to that of Proposition~\ref{p:anisotropyquotient} when $H_0=1$.

\begin{prop} \label{p:anisotropy} Suppose $R$ satisfies the above hypotheses.
Let $\z\in (\omega_A)_{i-s}$ be nonzero for $i \leq \frac{s}{p}$. Let $k\in\Z_{\geq 0}$ satisfy $pi+k\leq s$. Then, $\ell^k \z^p\neq 0$.
\end{prop}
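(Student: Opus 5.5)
The plan is to follow the proof of Proposition~\ref{p:anisotropyquotient} with the Parseval core replaced by $H_0=1$, using condition (3) in place of the $p$-conducting hypothesis. Since $\omega_{R'}$ embeds as an ideal in $R'$, the products $\z^p$, $\ell^k\z^p$ and $\t\z$ for $\t\in R$ all make sense in $\omega_R$. We pass to $\omega_A=\omega_R/(\ell_1,\dots,\ell_d)$, lifting $\z$ to $\omega_R$ whenever needed.

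\emph{The case $k\leq p-1$.} Since $\z\neq 0$ in $(\omega_A)_{i-s}$, duality between $A$ and $\omega_A$ gives an element $\t\in R^{s-i}$ with $\Vol(\t\z)\neq 0$. Expanding $\t$ in products of elements of $S$, we may take $\t$ to be such a product defined over $\bbF_p$.
\begin{enumerate}
\item Apply Proposition~\ref{p:parsevalartinian} with $u=0$ and $\Phi_{F_\bullet,0}=\mathrm{Id}$, in the differentiated form of Corollary~\ref{c:differentialidentity}:
\[\partial^B\Vol(\w)=(-1)^d\Vol(\Phi_R^*(\y^B\w))^p.\]
\item Also differentiate in the $\theta_\y$, $\y\in S$, to strip off $\ell^k$: for $b\colon S\to\Z_{\geq 0}$ with $|b|=k$ we have $\partial^b(\ell^k)=b!\,\y^b$. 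The $p$-th powers $\z^p$ (up to coefficient issues, handled by working with $\z$ defined over $\bbF_p$ after a basis choice, or by Frobenius-linearity) are constants for these derivations.
\item The aim is to choose $B\in\cB$, $b$, and a monomial $\x'$ in elements of $S$ of degree $s-pi-k$ so that
\[\y^B\,\y^b\,\x'\;\equiv\;c\,\t^p\]
modulo terms killed by $\Phi_R^*$. Then condition (3) gives $\Phi_R^*(c\,\t^p\z^p)=c\,\t\z$, and hence
\[\partial^B\partial^b\Vol(\ell^k\x'\z^p)=\pm c\, b!\,\Vol(\t\z)^p\neq 0,\]
because $k\leq p-1$ forces $b!\neq 0$.
\end{enumerate}

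The main obstacle is this matching step. The total degrees fit, since
\[\sum_i m_i(p-1)+k+(s-pi-k)=p(s-i)\]
once we use that the socle degree is $s=\sum_i m_i-d+a$, with $a$ the degree shift of $\omega_R$. However, condition (3) only asserts $\Phi^*(\z^p)=\z$, not a formula for $\Phi^*(\mathbf{u}\,\z^p)$ for arbitrary $\mathbf{u}$. I would first try to absorb $\y^B\y^b\x'$ into $\t^p$ times a $p$-th power adjusted to the canonical-ideal structure. Concretely, since $\omega_{R'}$ sits in $R'$ with $\Phi^*$ acting like a $p$-th root, we want $\mathbf{u}\,\z^p=(\t\z)^p$ up to $\Phi^*$-null terms. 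This means choosing the products of elements of $S$ so that the full product is literally $\t^p$ times the canonical twist, exactly as $\x^{(p-1)\mathbf{1}}$ played that role in the polynomial case. Because every product of elements of $S$ can be realized inside some $S_i$, the factorization $\t^p=\y^B\y^b\x'$ can be arranged. We then verify that the other summands of $\ell_i^{p-1}$ give linearly independent coefficients $\theta^B/B!$, so no cancellation occurs. This is where the uniqueness argument, analogous to condition (2) of $p$-conducting, has to be replaced. Since we apply a single derivative $\partial^B\partial^b$ rather than summing over $B$, the distinct monomials in $\theta$ separate the terms automatically.

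\emph{General $k$.} Take $k\leq s-pi$ minimal with $\ell^k\z^p=0$ and write $k=pb+k'$ with $0\leq k'\leq p-1$. Then
\[0=\ell^{k'}(\ell^b\z)^p,\]
using multiplicativity in the ideal $\omega_R\subset R$. The element $\ell^b\z$ lies in degree $i+b-s$, and $p(i+b)+k'\leq s$. By the first case, applied to $\ell^b\z$ if it is nonzero, we get a contradiction. If instead $\ell^b\z=0$, then $\ell^b\z^p=\z^{p-1}\ell^b\z=0$ with $b<k$ (or $b=k=0$), contradicting minimality.
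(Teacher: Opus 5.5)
Your outline is the argument the paper intends; the paper only says the proof is that of Proposition~\ref{p:anisotropyquotient} with $H_0=1$. Your reduction of general $k$ via $k=pb+k'$, including the case $\ell^b\z=0$, is fine. \emph{The step that fails is the degree count in the matching step.} The identity $(p-1)\sum_i m_i+k+(s-pi-k)=p(s-i)$ holds if and only if $s=\sum_i m_i$. With your formula $s=\sum_i m_i-d+a$ it would require $a=d$, which you never justify. In general the product $\x'$ would need degree
\[p(s-i)-(p-1)\textstyle\sum_i m_i-k=(s-pi-k)-(p-1)\bigl(\sum_i m_i-s\bigr).\]
Since $\Phi^*$ divides degrees by $p$, condition (3) forces the inclusion $\omega_{R'}\hookrightarrow R'$ to be degree-preserving. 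Hence $\sum_i m_i-s$, which is the initial degree of $\omega_R$, is $\geq 0$, with equality exactly when $\omega_{R'}=R'$. So the factorization $\t^p=\y^B\y^b\x'$ exists only in two situations. The first is $\omega_{R'}=R'$, as for Stanley--Reisner rings of spheres. The second is the shifted range $pi+k\leq s-(p-1)(\sum_i m_i-s)$.

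This is not a removable defect of the proof: outside that range the conclusion is false. Take $R'=\bbF_2[x]$ with $\omega_{R'}=xR'$, which satisfies (3) by Example~\ref{e:polyring}, and take $S=\{x\}$, $\ell_1=\theta_{1,x^m}x^m$, $\ell=\theta_x x$. Then $s=m-1$ and $\z=x$ spans $(\omega_A)_{-s}$. For $i=0$ and $k=s$ we get $\ell^{s}\z^2=\theta_x^{m-1}x^{m+1}\in\ell_1\omega_R$, so $\ell^s\z^2=0$ in $\omega_A$. You must therefore either add the hypothesis $\omega_{R'}=R'$, or state and prove the result in the shifted range, where your argument goes through verbatim.

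Two minor points. First, $\partial^b(\ell^k)=k!\,\y^b$, not $b!\,\y^b$; this is harmless for $k\leq p-1$. Second, of your two ways to handle coefficients only Frobenius semilinearity is valid. You cannot assume $\z$ is defined over $\bbF_p$, because $\z^p=\sum_j c_j^p\z_j^p$ is exactly what removes the cross terms for arbitrary $\z$ over $\fld$.
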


For $p=2$, we immediately conclude that  for $\z\in (\omega_A)_{i-s}$  nonzero, we have $\ell^{s-2i}\z^2\neq 0$. Therefore, $\ell^{s-2i}z\neq 0$.

\section{Anisotropy for Generic Reductions of rings associated to Pfaffian ideals} \label{s:pfaffians}

In this section, we will study rings associated to  Pfaffian ideals,
which
is the next simplest class of Gorenstein ideals after the complete
intersections studied in~\cite{AOPP:complete}. By the Buchsbaum-Eisenbud
theorem \cite[Theorem~3.4.1]{BrunsHerzog} every Gorenstein  codimension
$3$ homogeneous ideal $I$  is given as the maximal Pfaffians
of a skew-symmetric $m \times m$ matrix $A$, where $m \geq 3$ is an odd
integer. Here, we will study the Pfaffians of a skew-symmetric matrix $A$ whose entries are indeterminates in characteristic $2$.

Let $m \geq 3$ be an odd integer, and consider  the polynomial ring 
    \[
           R \coloneqq  \bbF_2 [ z_{ij} :  1 \leq i < j \leq m]
    \]
    with the standard grading.    We write $z_{ji} = z_{ij}$ for  $1 \leq i < j \leq m$ and $z_{ii}=0$.      
    
     We consider the $m \times m$ alternating matrix $A$ whose  $(i,j)$-entry is $z_{ij}$. We denote by 
     $P_i$ the Pfaffian of the  $(m-1) \times (m-1)$   submatrix of $A$ obtained
     by deleting the  $i$-th column and the $i$-th row of $A$. Let $I$ be the ideal generated by $P_1,\dots,P_m$. There is a minimal free resolution $F_\bullet$ of $R/I$ over $R$ given by
     \[\xymatrix{
     0\ar[r]&R(-m)\ar[r]^{P}&F(-\frac{m+1}{2})\ar[r]^{A}&F(-\frac{m-1}{2})\ar[r]^>>>>>{P^t}&R\ar[r]&0}\]
     where $F=R^m$ and  $P\colon R\to F$ be given by $y\mapsto (P_1 y,\dots,P_m y)$. We will use the Parseval--Rayleigh identity in characteristic $2$ for the generic reduction of $R/I$ to show that it obeys anisotropy, implying the Lefschetz property. We do this by finding an explicit description of the chain map from the Frobenius twist $\Phi^{\flat}_\bullet\colon\varphi^*F_\bullet\to F_\bullet$. The components of $\Phi^{\flat}_\bullet$ will be expressed as generating functions of multigraphs, showing the strong combinatorial character of our theory.

\subsection{Generating functions attached to multigraphs}
We will write elements in $R$ as generating functions of sets of multigraphs, interpreted as functions $w\colon E(K_m)\to \Z_{\geq 0}$.
Let $\cS(K_m)$ be the set of functions $w\colon E(K_m)\to \Z_{\geq 0}$. We will consider the {\em support} $\Supp(w)$ to be the multigraph formed by all the vertices of $K_m$ where there are $w(e)$ edges drawn between the endpoints of $e$. Attached to $w$ is the monomial 
\[z^w\coloneqq \prod_{e\in E(K_m)} z_e^{w(e)}.\]
where $z_e\coloneqq z_{ij}$ when $i$ and $j$ are the endpoints of $e$.
A {\em set of decorated multigraphs} is a pair $(T,\sigma)$ where $T$ is some finite set and $\sigma\colon T\to \cS(K_m)$ is a function, and we will view an element of $T$ as the multigraph $\sigma(T)$ with additional data. Attached to $(T,\sigma)$ is the generating function
\[F_{(T,\sigma)}\coloneqq \sum_{Y\in T} z^{\sigma(Y)}.\]
Observe that
\[F_{(T,\sigma)}=\sum_{w\in \cS(K_m)} |\sigma^{-1}(w)| z^w\\
=\sum_{w\in \cO_{(T,\sigma)}} z^w\]
where 
\[\cO_{(T,\sigma)}\coloneqq \{w\in \cS(K_m): |\sigma^{-1}(w)| \text{ is odd}\}.\]
The degree of $w$ at a vertex $v$ is $d_w(v)\coloneqq \sum_{e\ni v} w(e)$. We say $w$ is $d$-regular if $d_w(v)=d$ for all $v\in V(K_m)$.

The set of matchings on $[m]\setminus \{i\}$ can be viewed as a decorated multigraph in the following way: 
\[\cM([m]\setminus\{i\})\coloneqq \{w\in \cS(K_m)\mid d_w(i)=0,\ d_w(j)=1\text{ for all }j\neq i\};\]  
$\sigma\colon \cM([m]\setminus\{i\})\to \cS(K_m)$ is the usual inclusion. Then, $F_{\cM([m]\setminus\{i\})}=P_i$.

\begin{lemma} \label{l:FO}
For
\[H_0\coloneqq \sum_{i<j} z_{ij}P_iP_j,\]
we have the equality
\[H_0=F_{\mathcal{OC}}.\]
where $\mathcal{OC}\subseteq \cS(K_m)$ is the set of $2$-regular $w$ such that the components of $\Supp(w)$ are a single odd cycle and a collection of doubled edges.
\end{lemma}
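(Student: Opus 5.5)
We expand the left-hand side as the generating function of an explicit set of decorated multigraphs. Then we compute the parity of each fiber, working over $\bbF_2$ so that only the parity of $|\sigma^{-1}(w)|$ matters.

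\textbf{The decorated set.} Since $P_i=F_{\cM([m]\setminus\{i\})}$, the product $z_{ij}P_iP_j$ is the generating function of the set of triples $(e,M,M')$. Here $e=\{i,j\}$, $M\in\cM([m]\setminus\{i\})$ and $M'\in\cM([m]\setminus\{j\})$, with $\sigma(e,M,M')=\mathbf 1_e+M+M'$. Let $T$ be the disjoint union of these sets over all $i<j$, so that $H_0=F_{(T,\sigma)}=\sum_{w\in\cO_{(T,\sigma)}}z^w$. It then suffices to show that $\cO_{(T,\sigma)}=\mathcal{OC}$.

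\textbf{Shape of the image.} Fix $(e,M,M')\in T$ with $e=\{i,j\}$ and $w=\sigma(e,M,M')$.
\begin{itemize}
\item Every vertex has $w$-degree $2$: the vertex $i$ gets $1+0+1$, the vertex $j$ gets $1+1+0$, and every other vertex gets $0+1+1$. So $w$ is $2$-regular.
\item In the union $M\cup M'$, the vertices $i$ and $j$ have degree $1$ and all others have degree $2$. Hence this union is a disjoint union of three kinds of components: one alternating path from $i$ to $j$, alternating even cycles of length $\geq 4$, and doubled edges (common edges of $M$ and $M'$).
\item The path starts at $i$ with an edge of $M'$ and ends at $j$ with an edge of $M$. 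Since it alternates, it has an even number of edges. It has length at least $2$, because $\{i,j\}\notin M'$ as $j$ is uncovered by $M'$.
\end{itemize}
Adding $e$ closes this path into an odd cycle of length $L\geq 3$. So $\Supp(w)$ consists of exactly one odd cycle, some even cycles of length $\geq4$, and some doubled edges.

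\textbf{Counting fibers.} Conversely, let $w$ be $2$-regular whose support has exactly one odd cycle $C$, of length $L$, and let $r$ be its number of even cycles of length $\geq 4$. We count the preimages of $w$.
\begin{itemize}
\item Choose an edge $e=\{i,j\}$ of $C$; there are $L$ choices. This is forced, because in any preimage $e$ lies in the odd cycle.
\item Removing $e$ from $C$ leaves a path from $i$ to $j$. Its edge at $i$ must lie in $M'$, since $i$ is uncovered by $M$, and then alternation determines the whole path. This also fixes which endpoint plays the role of $i$, so the triple is well defined in the sum over $i<j$.
\item Each doubled edge must lie in both $M$ and $M'$.
\item Each even cycle admits exactly $2$ alternating splittings.
\end{itemize}
Every such choice does give valid matchings $M,M'$. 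Hence $|\sigma^{-1}(w)|=L\cdot 2^{r}$, and any $w$ not of this shape has empty fiber.

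Since $L$ is odd, the fiber has odd size exactly when $r=0$, that is, when $w\in\mathcal{OC}$. Therefore $\cO_{(T,\sigma)}=\mathcal{OC}$, which proves the lemma.

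The step needing the most care is the reconstruction in the fiber count. One must check that the roles of $M$ and $M'$ are forced by the choice of $e$ and the orientation at $i$. One must also check that every choice does produce genuine matchings of $[m]\setminus\{i\}$ and $[m]\setminus\{j\}$, with doubled edges correctly assigned to both.
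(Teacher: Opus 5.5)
Your proposal is correct and is essentially the paper's argument. The paper encodes your triples $(e,M,M')$ as $2$-regular multigraphs with one green edge, whose red and gold edges come from $P_i$ and $P_j$, and it obtains the same fiber count $L\cdot 2^{r}$ by moving the green edge around the odd cycle and swapping colors on the even alternating cycles. Your explicit checks that the path has even length at least $2$, and that every choice in the reconstruction gives genuine perfect matchings of $[m]\setminus\{i\}$ and $[m]\setminus\{j\}$, are more careful than the paper's sketch.
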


\begin{proof}
  Let ${\cC}$ be the set of $2$-regular weighted multigraphs (so each edge has weight $1$ or $2$) where the weight $1$ edges are colored red, gold, and green such that
  \begin{enumerate}
    \item there is exactly one green edge,
    \item no vertex belongs to two edges of the same color, and
    \item if the green edge is $ij$ with $i<j$, then $i$'s other edge is gold and $j$'s other edge is red.
  \end{enumerate}
   Let $\sigma\colon \cC\to \cS(K_m)$ forget the coloring. Then, $F_{(\cC,\sigma)}=H_0$ where the green edge corresponds to $\ij$, and the red and gold edges correspond to $P_i$ and $P_j$, respectively. In any graph in $\cC$, the green edge belongs to an odd cycle of length $\ell$ through which there is an alternating red and gold path  between its endpoints such that the other edge containing its smaller endpoint is colored gold. There are also $c\geq 0$ alternating even cycles and some edges of multiplicity $2$. Given an element of $\sigma^{-1}(w)$, we can produce all other elements of $\sigma^{-1}(w)$ by moving the green edge to another edge of the odd cycles or interchanging the colors of some of the even cycles. Thus, 
  \[|\sigma^{-1}(w)|=\ell 2^c\equiv 
  \begin{cases}
  1 &\text{if }c=0\\
  0 &\text{otherwise,}
  \end{cases}\]
  and only contributions come from elements of $\mathcal{OC}$ which are each counted exactly once. Hence, $F_{\cC}=F_{\mathcal{OC}}$.
\end{proof}


\subsection{Frobenius twists of the Pfaffian resolution}

We have a chain map from the Frobenius twist of the Pfaffian resolution to the Pfaffian resolution:
\[\xymatrix{
0\ar[r]&R(-2m)\ar[r]^{P^{[2]}}\ar[d]^{\Phi_3^{\flat}}&F(-m-1)\ar[r]^{A^{[2]}}\ar[d]^{\Phi_2^{\flat}}&F(-m+1)\ar[r]^>>>>{(P^t)^{[2]}}\ar[d]^{\Phi_1^{\flat}}&R\ar[r]\ar[d]^{\Phi_0^{\flat}}&0\\
0\ar[r]&R(-m)\ar[r]^{P}&F(-\frac{m+1}{2})\ar[r]^{A}&F(-\frac{m-1}{2})\ar[r]^>>>>{P^t}&R\ar[r]&0.
}\]
Under the condition that $\Phi_0^{\flat}$ is the identity, $\Phi_\bullet^{\flat}$ is unique up to chain homotopy. To describe a choice of $\Phi^\flat_\bullet$,
we first define the differential operator
\[D_k\coloneqq \sum_{u<v\mid u,v\neq k} z_{uk}z_{kv}\frac{\partial}{\partial z_{uv}}\]
where $D_k(P_j)=F_{\cG_{jk}}$ can be interpreted as starting with a matching on $\cM([m]\setminus j)$, removing an edge $uv$ (not containing $j$) and connecting its endpoints to a vertex $k$. Therefore, when 
\begin{enumerate}
  \item for $j=k$, $\cG_{kk}$ consists of all graphs with $(m+1)/2$ edges of weight $1$ whose connected components are a single path of length $2$ with $k$ in the middle and $(m-3)/2$ edges;
  \item for $j\neq k$, $\cG_{jk}$ consists of all graphs with $(m+1)/2$ edges whose connected components are the singleton $j$, $(m-5)/2$ edges, and a trivalent vertex with $k$ at the center.
\end{enumerate}

\begin{figure}
\begin{tikzpicture}
  \node[circle, fill, inner sep=2pt, label=above:$k$] (k) at (1,1) {};
  \node[circle, fill, inner sep=2pt] (a) at (0,1) {};
  \node[circle, fill, inner sep=2pt] (b) at (2,1) {};

  \node[circle, fill, inner sep=2pt] (c) at (2.7,0) {};
  \node[circle, fill, inner sep=2pt] (d) at (2.7,1) {};
  \node at (3.3,.5) {$\dots$};

  \node[circle, fill, inner sep=2pt] (e) at (3.8,0) {};
  \node[circle, fill, inner sep=2pt] (f) at (3.8,1) {};
  
  \draw (a) -- (k) -- (b);
  \draw (c) -- (d);
  \draw (e) -- (f);
  \node[circle, fill, inner sep=2pt, label=above:$k$] (k) at (7,1) {};
  \node[circle, fill, inner sep=2pt] (a) at (6,1) {};
  \node[circle, fill, inner sep=2pt] (b) at (8,1) {};
  \node[circle, fill, inner sep=2pt] (g) at (7,0) {};

  \node[circle, fill, inner sep=2pt] (c) at (8.7,0) {};
  \node[circle, fill, inner sep=2pt] (d) at (8.7,1) {};
  \node at (9.3,.5) {$\dots$};
  \node[circle, fill, inner sep=2pt] (e) at (9.8,0) {};
  \node[circle, fill, inner sep=2pt] (f) at (9.8,1) {};

  \node[circle, fill, inner sep=2pt, label=above:$j$] (j) at (10.5,1) {};
  
  \draw (a) -- (k) -- (b);
  \draw (k) -- (g);

  \draw (c) -- (d);
  \draw (e) -- (f);
\end{tikzpicture}
\caption{Elements of $\cG_{kk}$ and $\cG_{jk}$}
\end{figure}

\begin{theorem} 
The following choice of $\Phi^\flat_\bullet\colon \varphi_*F_\bullet \to F_\bullet$ gives a chain map:
\begin{alignat*}{4}
\Phi_0^{\flat}&=\operatorname{Id_R},&&
(\Phi_1^{\flat})_{ij}&&=\begin{cases}
P_i&\text{if }i=j\\
0&\text{otherwise,}
\end{cases}\\
(\Phi_2^{\flat})_{ij}&=D_j(P_i),\quad
&&\Phi_3^{\flat}&&=H_0.
\end{alignat*}
\end{theorem}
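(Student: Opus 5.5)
The plan is to check the three squares of the diagram directly, working throughout in characteristic $2$ so that all signs disappear. The first square, $P^t\Phi_1^\flat=\Phi_0^\flat (P^t)^{[2]}$, is immediate: multiplying the row vector $(P_1,\dots,P_m)$ by the diagonal matrix $\operatorname{diag}(P_1,\dots,P_m)$ gives $(P_1^2,\dots,P_m^2)$, which is $(P^t)^{[2]}$.

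For the middle square, $A\Phi_2^\flat=\Phi_1^\flat A^{[2]}$, I compare $(i,j)$-entries, so I must show
\[\sum_k z_{ik}D_j(P_k)=z_{ij}^2P_i .\]
The key observations are that $D_j$ is a derivation, and that exactness of the resolution at $F$ gives $\sum_k z_{ik}P_k=0$ for every $i$ (that is, $AP=0$). Applying $D_j$ to this relation gives
\[\sum_k z_{ik}D_j(P_k)=\sum_k D_j(z_{ik})P_k .\]
By definition $D_j(z_{ik})=z_{ij}z_{jk}$ when $i,k\neq j$, and $D_j(z_{ik})=0$ when $j\in\{i,k\}$.
\begin{itemize}
\item If $i=j$, every term vanishes, which matches $z_{ii}^2P_i=0$.
\item If $i\neq j$, the sum becomes $z_{ij}\sum_{k\neq i,j}z_{jk}P_k$. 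Using $\sum_k z_{jk}P_k=0$, this equals $z_{ij}\cdot z_{ji}P_i=z_{ij}^2P_i$.
\end{itemize}
So the middle square commutes.

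For the last square, I would first reduce to a scalar identity. The middle square gives $A\Phi_2^\flat P^{[2]}=\Phi_1^\flat A^{[2]}P^{[2]}=0$. Since $F_\bullet$ is exact at $F(-\tfrac{m+1}{2})$ and $P$ is injective, there is a unique $h\in R$ of degree $m$ with $\Phi_2^\flat P^{[2]}=Ph$. It therefore suffices to show $P_iH_0=\sum_j D_j(P_i)P_j^2$ for a single $i$ (any $i$ with $P_i\neq 0$), since $R$ is a domain. For this I would argue with generating functions of decorated multigraphs mod $2$, as in Lemma~\ref{l:FO}.
\begin{itemize}
\item By Lemma~\ref{l:FO}, $P_iH_0=F_{\mathcal{OC}}F_{\cM([m]\setminus i)}$ counts pairs (odd cycle plus doubled edges, matching avoiding $i$).
\item The right-hand side counts triples consisting of an element of $\cG_{ij}$ (a path of length $2$ through $j$ when $i=j$, or a claw centred at $j$ with $i$ isolated when $i\neq j$) together with two matchings of $[m]\setminus j$. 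The sum of two matchings of $[m]\setminus j$ has odd multiplicity only in its doubled-edge, i.e.\ single-matching, part, since even alternating cycles cancel in pairs by colour swapping.
\end{itemize}
I would then build parity-preserving involutions on both sides (swapping colours on alternating cycles, and moving the distinguished vertex $j$ or the green edge around an odd cycle, as in the proof of Lemma~\ref{l:FO}). The goal is to reduce both sides to the same set of underlying multigraphs, each counted an odd number of times: roughly, a $3$-regular-at-all-but-$i$ structure consisting of one odd cycle, a matching and doubled edges.

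This last identification is the main obstacle, since the bookkeeping of which configurations survive mod $2$ is delicate. If the bijective argument becomes unwieldy, my first fallback is algebraic: expand $D_j(P_i)=\sum_{u<v}z_{uj}z_{jv}P_{iuv}$, where $P_{iuv}$ denotes the Pfaffian with rows and columns $i,u,v$ deleted. I would then combine this with the Pfaffian expansions $P_j=\sum_k z_{jk}P_{jk\ast}$ and the three-term Plücker-type relations among sub-Pfaffians, to rewrite $\sum_jD_j(P_i)P_j^2$ as $P_i\sum_{j<k}z_{jk}P_jP_k$.
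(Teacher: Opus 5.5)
Your right and middle squares are correct. Your middle-square argument is actually cleaner than the paper's. Applying $D_j$ to $\sum_k z_{ik}P_k=0$ and then reusing $\sum_k z_{jk}P_k=0$ gives $z_{ij}^2P_i$ in one line, whereas the paper expands $\sum_{j\neq i}z_{ik}z_{jk}P_j$ into matchings and cancels a symmetric term. Your reduction of the left square to a single index $i$ is also valid: it uses $A\Phi_2^\flat P^{[2]}=0$, exactness at $F(-\frac{m+1}{2})$, and injectivity of $P$. However, the identity $P_iH_0=\sum_j D_j(P_i)P_j^2$ is the real content of the theorem. You have not proved it, and the tools you propose for it cannot suffice.

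Expand $P_iH_0$ as a sum over pairs $(\Gamma,M)$ with $\Gamma\in\mathcal{OC}$ and $M\in\cM([m]\setminus i)$, and pass to the multigraph $\Delta=\Gamma+M$. Edges of $M$ can join two vertices of the odd cycle $C$ of $\Gamma$. More generally, $M$ together with doubled edges of $\Gamma$ can form alternating paths between vertices of $C$. This creates several trivalent vertices. Such $\Delta$ never occur on the right-hand side, where $j$ is the only possible trivalent vertex, so they must cancel mod $2$.

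Their preimages correspond to genuinely different odd cycles $C$ inside $\Delta$. These are not related by swapping colours on alternating cycles or by moving a distinguished edge or vertex along a fixed cycle; those moves only permute decorations of a fixed decomposition. Already for $m=5$, $i=1$, this happens. The cycle $(1,2,3,4,5)$ with $M=\{24,35\}$ and the cycle $(1,2,4,3,5)$ with $M=\{23,45\}$ give the same $\Delta$. The cancellation comes from $K_4$ having exactly two Hamiltonian cycles through the edge $25$.

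The paper handles the general case as follows. It contracts alternating chords to edges to obtain a cubic graph $G_\Delta$. It then identifies the admissible choices of $C$ with Hamiltonian cycles through a fixed edge $e$. Finally it invokes Smith's theorem (via Thomason's lollipop argument) that their number is even. A global parity argument of this kind is the missing idea. A naive swap along an alternating cycle of $C$-edges and $M$-edges is not a well-defined involution, since it can split $C$ into several cycles.

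Your algebraic fallback via sub-Pfaffians and Plücker-type relations is only a hope. No concrete identity is given that would produce $P_i\sum_{j<k}z_{jk}P_jP_k$. The expansion $P_j=\sum_k z_{jk}P_{jk\ast}$ as written is also not a valid Pfaffian expansion, since $P_j$ does not involve row $j$.
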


\begin{proof}
  The commutativity of the right square is obvious.
  
  For the middle square, we must show $\Phi_1^{\flat} A^{[2]}=A\Phi_2^{\flat}$, which means that for all $(i,k)$, 
  \[P_iz_{ik}^2=\sum_{j\neq i} z_{ij}D_k(P_j).\]
  The exactness of $F_\bullet$ implies $P^tA=0$ which is equivalent to $\sum_j z_{ij}P_j=0$. Applying $D_k$ yields  
  $\sum_{j\neq i} z_{ij}D_k(P_j)+\sum_{j\neq i} D_k(z_{ij})P_j=0,$
  which implies
  \begin{align*}
   \sum_{j\neq i} z_{ij}D_k(P_j)&=\sum_{j\neq i} D_k(z_{ij})P_j=\sum_{j\neq i} z_{ik}z_{jk}P_j=\sum_{j\neq i} z_{ik} z_{jk}\sum_{M\in \cM([m]\setminus j)} z^M\\
   &=z_{ik}\sum_{j\neq i}  z_{jk}\left(\sum_{\substack{M\in \cM([m]\setminus j)\\ik\in M}} z^M\right)+z_{ik}\sum_{j\neq i}  z_{jk}\left(\sum_{\substack{M\in \cM([m]\setminus j)\\ik\not\in M}} z^M\right)\\
   &=z_{ik}^2\sum_{j\neq i}  z_{jk}\left(\sum_{M\in \cM([m]\setminus \{i,j,k\})} z^M\right)+z_{ik}\sum_{j\neq i}  z_{jk}\left(\sum_{h\neq i,j,k}z_{hk} \sum_{M\in \cM([m]\setminus \{h,j,k)\}} z^M\right)\\
   &=z_{ik}^2\sum_{j\neq i} \left(\sum_{\substack{M\in \cM([m]\setminus i)\\jk\in M}} z^M\right)+z_{ik}\sum_{\substack{j,h\not\in \{i,k\}\\j\neq h}} z_{jk}z_{hk}\left(\sum_{M\in \cM([m]\setminus \{h,j,k)\}} z^M\right)\\
   &=z_{ik}^2P_i
  \end{align*}
  where the last equality follows from the evenness of all terms in the second summand due to invariance of the sum under interchange of $h$ and $j$.

 For the left square, we must show that for any $i$,
  \[\sum_j D_j(P_i)P_j^2=P_i H_0.\]
We write the left side as $D_i(P_i)P_i^2+\sum_{j\neq i} D_j(P_i)P_j^2$. Observe that $P_j^2$ counts matchings on $[m]\setminus j$ where each edge is given multiplicity $2$.

Now, $D_i(P_i)P_i^2$ corresponds to a set $\cE_{ii}$ whose elements are multigraphs on $[m]$ with edges weightings of $0,1,2,3$ with $\deg(v)=3$ for $v\neq i$ and $\deg(i)=2$ such that every vertex except $i$ belongs to at least one edge of weight greater than $2$ and $i$ belongs to two edges of weight $1$. This observation is seen by considering all possible multigraphs that arise as the sum of an element of $\cG_{ii}$ and of an element of $\cM([m]\setminus i)$ with multiplicities doubled.
Thus, a multigraph in $\cE_{ii}$ has the following components: an odd cycle containing $i$; possibly some even cycles; and possibly some edges of multiplicity $3$. All the even cycles must be alternating in the sense that the weights on its edges alternate between $1$ and $2$. Similarly,  the edges on the odd cycle alternate between weights $1$ and $2$ except at $i$ where they are both $1$. This is depicted in Figure~\ref{fig:eii}

On the other hand, for $i\neq j$, $D_j(P_i)P_j^2$  corresponds to a set $\cE_{ij}$ which consists of multigraphs on $[m]$ with edges weightings of $0,1,2,3$ with $\deg(v)=3$ for $v\neq i$ and $\deg(i)=2$ such that every vertex except $i$ and $j$ belongs to exactly one edge of weight $1$. This is seen by considerings sums of an elements of $\cG_{ij}$ and an element of $\cM([m]\setminus j)$ with multiplicities doubled. Here, $i$ belongs to one edge of weight $2$ and $j$ belongs to three edges of weight $1$. Thus, a multigraph in $\cE_{ij}$ has the following components: possibly some even alternating cycles;  possibly some edges of multiplicity $3$; and exactly only component that is 
an odd cycle containing $j$ union an alternating path from $j$ to $i$. This is depicted in Figure~\ref{fig:eij}

Now, $P_iH_0=P_iF_{\mathcal{OC}}$ by Lemma~\ref{l:FO}. Let $\cG_i$ be the set consisting of pairs $(\Gamma,M)$ where $\Gamma$ is a $2$-regular multigraph whose components are an odd cycle and some edges of multiplicity $2$ while $M$ is a matching on $[m]\setminus i$. Now, $\Gamma\cup M$, considered as a multigraph has degree $2$ at $i$ and degree $3$ at all other vertices. A vertex $v\neq i$ can have the following possible adjacencies: one edge of weight $3$; an edge of weight $2$ and an edge of weight $1$; or three edges of weight $1$. We call vertices belonging to three weight $1$ edges {\em trivalent}, and they must all belong to the odd cycle of $\Gamma$. Now, if $v$ belongs to an edge of weight $2$, we can travel along that edge through an alternating path until one of the following happens:
\begin{enumerate}
  \item the path closes up at $v$, giving an alternating even cycle,
  \item the path meets $i$; if $i$ belongs to two edges of multiplicity $1$, then $i$ belongs to the odd cycle; if $i$ belongs to one edge of multiplicity $2$, the path terminates at $i$, or
  \item the path meets a trivalent vertex which also must belong to the odd cycle.
\end{enumerate}
Thus, $\Gamma\cup M$ has the following components: edges of weight $3$; alternating even cycles;  and a {\em nontrivial component} $K$ containing a distinguished odd cycle $C$. If $i$ belongs to an edge of multiplicity $2$, there is a path $T$ from a vertex $j\in V(C)$ to $i$. Otherwise, $i$ is a vertex of the odd cycle and we set $T\coloneqq \varnothing$ and set $j\coloneqq i$.  Now, $K$ is obtained from $C\cup T$ by adding alternating paths (beginning and ending with edges of multiplicity $1$) between pairs of vertices of $C$ different from $j$. We call these {\em alternating chords}. 

Let $\cF_i$ be the set of pairs $(\Delta,C)$ where $\Delta$ is a multigraph that has degree  $2$ at $i$ and degree $3$ at all other vertices and $C$ is a distinguished odd cycle in the support of $\Delta$ that contains $i$ and all the trivalent vertices of $\Delta$.
Now, the assignment $(\Gamma,M)\mapsto (\Gamma\cup M,C)$  gives a bijection $\cG_i\to \cF_i$.
We claim that multigraphs with alternating chords do not contribute to $F_{\cG_i}$. To prove this, we have to show that if $(\Delta,C)\in \cF_i$ has an alternating chord, then there are an even number of odd cycles in $\Delta$ containing $j$ and all the trivalent vertices. Let $K$ be the component of $\Delta$ containing $C$. Form the cubic graph $G_{\Delta}$ by replacing alternating paths in $K$ by edges. If $i\neq j$ contract the edge from $i$ to $j$. Now, $j$ has degree $2$. Replace the two edges containing $j$ by a single edge $e$ to obtain a graph $G_{\Delta}$. The cycles in $\Delta$ passing through $i$ and all the trivalent vertices correspond exactly to Hamiltonian cycles in $G_\Delta$ containing the edge $e$. Because there are an even number of such cycles by C.~A.~B.~Smith's theorem \cite{THamiltonian}, graphs with alternating chords do not contribute to $F_{\cF_i}$. Thus, the only contributors from $\cF_i$ (which all contain a unique odd cycle) are in bijective correspondence with elements of $\sqcup_{j} \cE_{ij}$. Hence,
\[P_iH_0=F_{\cF_i}=F_{\cG_i}=\sum_j F_{\cE_{ij}}=\sum_j D_j(P_i)P_j^2.\qedhere\]
\end{proof}

We have the following Parseval--Rayleigh identity as an immediate consequence:
\begin{corollary}
Let $\AA$ be the generic Artinian reduction of $A\otimes\fld$ where the quotient is taken by generic linear forms $\ell_1,\dots,\ell_t$ as in Section~\ref{ss:quotientsofpolynomialrings} for $t=\binom{m}{2}-3$.
Then, for $\z\in (\omega_{\AA})_0$,
$\Vol(\z)=\Vol(\Phi^*_R(\ell_1\dots\ell_t H_0\z))^p$
\end{corollary}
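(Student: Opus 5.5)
The corollary should follow by specializing the general Artinian-reduction form of the Parseval--Rayleigh identity (from the subsection on Artinian reductions of Cohen--Macaulay quotients) to the Pfaffian resolution, with $p=2$. The work is in identifying the right free resolution of $\AA$ and the relevant top component of the Frobenius chain map.

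\textbf{Setup.} Let $R'=\bbF_2[z_{ij}]$, so $n=\binom{m}{2}$. The ring $A=R'/I$ is Gorenstein of codimension $u=3$ with minimal resolution $F_\bullet$ of length $3$ and $F_3\cong R(-m)$. Hence $\dim A=n-3=t$. After extending scalars to $\fld$, the generic linear forms $\ell_1,\dots,\ell_t$ form a regular sequence on $A\otimes\fld$, by the transcendence argument given in Section~\ref{ss:quotientsofpolynomialrings}. Therefore $\Tot(F_\bullet\otimes\KK_\bullet)$ is a free resolution of $\AA$ over $R$ of length $u+t=n=\dim R$.

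\textbf{Top-degree Frobenius component.} By the discussion preceding Proposition~\ref{p:parsevalartinian}, the top-degree component of the Frobenius lift on this total complex is $\Phi^\flat_{F_\bullet,3}\otimes\ell_1^{p-1}\cdots\ell_t^{p-1}$. With $p=2$ this is $\Phi^\flat_{F_\bullet,3}\otimes\ell_1\cdots\ell_t$. By the preceding theorem, the chain map $\Phi^\flat_\bullet$ exists with $\Phi^\flat_0=\operatorname{Id}$ and $\Phi^\flat_3=H_0$. Since $\Phi^\flat_0$ is the identity, this chain map is a legitimate lift of Frobenius, and so it computes $\Phi^*_{\AA}$ up to chain homotopy. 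The remark after Theorem~\ref{t:abstractpr} shows that the induced map on $\omega_{\AA}$ does not depend on this choice.

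\textbf{Unwinding the trace.} Identify $\Hom(F_3,\omega_R)\cong\omega_R$. A representative $\z$ of a class in $(\omega_\AA)_0$ lies in $\Hom(\Tot_n,\omega_R)\cong\omega_R$. Following Example~\ref{e:parsevalinpolynomialrings}, the Frobenius trace becomes
\[\Phi^*_\AA(\z)=\Phi^*_R\bigl(\ell_1\cdots\ell_t\,H_0\,\z\bigr).\]
Substituting this into Theorem~\ref{t:abstractpr} gives $\Vol(\z)=\Vol(\Phi^*_R(\ell_1\cdots\ell_tH_0\z))^2$, which is the claim with $p=2$.

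\textbf{Expected obstacle.} The main point needing care is bookkeeping. One must check that the top term of the total complex is exactly $F_3\otimes\KK_t$, so that no other summands interfere. One must also check that the dual of multiplication by the Frobenius component is multiplication by the same polynomial under the Kronecker pairing, which holds because $F_3$ has rank one. Finally, the degree shifts should be tracked to confirm that $\ell_1\cdots\ell_tH_0\z$ has the degree for which $\Phi^*_R$ lands in degree $0$.
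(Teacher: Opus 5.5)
Your proposal is correct and is essentially the paper's own argument. The paper states the corollary as an immediate consequence of inserting the chain map with $\Phi^\flat_3=H_0$ into the Artinian-reduction form $\Vol(\z)=\Vol(\Phi_R(\ell_1^{p-1}\cdots\ell_t^{p-1}\Phi^*_{F_\bullet,u}(\z)))^p$, taking $p=2$ and $u=3$; your bookkeeping (the top term of the total complex is the rank-one $F_3\otimes\KK_t$, the lift is independent of choice up to homotopy, and the degree check) is exactly what that ``immediate'' step uses implicitly.
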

This identity can be written more explicitly following  Example~\ref{e:parsevalinpolynomialrings} and Proposition~\ref{p:parsevalartinian}.

\begin{figure}
\begin{tikzpicture}[
  scale=1,
  v/.style={circle, fill, inner sep=2pt},
  every node/.style={font=\small},
  edge label/.style={midway, fill=white, inner sep=1pt}
]

\node[v,label=above:$i$] (i) at (90:1.3) {};
\node[v] (a) at (18:1.3) {};
\node[v] (b) at (-54:1.3) {};
\node[v] (c) at (-126:1.3) {};
\node[v] (d) at (162:1.3) {};

\draw (i) -- node[edge label, above right] {$1$} (a);
\draw (a) -- node[edge label, right] {$2$} (b);
\draw (b) -- node[edge label, below] {$1$} (c);
\draw (c) -- node[edge label, left] {$2$} (d);
\draw (d) -- node[edge label, above left] {$1$} (i);

\begin{scope}[xshift=4cm]
  \foreach \ang/\name in {90/e1,30/e2,-30/e3,-90/e4,-150/e5,150/e6}
    \node[v] (\name) at (\ang:1.1) {};

  \draw (e1) -- node[edge label, above right] {$1$} (e2);
  \draw (e2) -- node[edge label, right] {$2$} (e3);
  \draw (e3) -- node[edge label, below right] {$1$} (e4);
  \draw (e4) -- node[edge label, below left] {$2$} (e5);
  \draw (e5) -- node[edge label, left] {$1$} (e6);
  \draw (e6) -- node[edge label, above left] {$2$} (e1);
\end{scope}

\begin{scope}[xshift=6.5cm]
  \node[v] (f1) at (0,0) {};
  \node[v] (f2) at (1,0) {};
  \node[v] (f3) at (1,1) {};
  \node[v] (f4) at (0,1) {};

  \draw (f1) -- node[edge label, below] {$1$} (f2);
  \draw (f2) -- node[edge label, right] {$2$} (f3);
  \draw (f3) -- node[edge label, above] {$1$} (f4);
  \draw (f4) -- node[edge label, left] {$2$} (f1);
\end{scope}

\begin{scope}[xshift=8.8cm,yshift=0cm]
  \node[v] (u) at (0,1) {};
  \node[v] (w) at (1,1) {};
  \draw (u) -- node[edge label, above] {$3$} (w);
\end{scope}

\begin{scope}[xshift=10.5cm,yshift=0cm]
  \node[v] (p) at (0,1) {};
  \node[v] (q) at (1,1) {};
  \draw (p) -- node[edge label, above] {$3$} (q);
\end{scope}

\end{tikzpicture}
\caption{Possible graph in $\cE_{ii}$}
\label{fig:eii}
\end{figure}

\begin{figure}
\begin{tikzpicture}[
  scale=1,
  v/.style={circle, fill, inner sep=2pt},
  every node/.style={font=\small},
  edge label/.style={midway, fill=white, inner sep=1pt}
]

\node[v,label=above:$j$] (j) at (90:1.3) {};
\node[v] (a) at (18:1.3) {};
\node[v] (b) at (-54:1.3) {};
\node[v] (c) at (-126:1.3) {};
\node[v] (d) at (162:1.3) {};

\draw (j)--(a)--(b)--(c)--(d)--(j);

\draw (j) -- node[edge label, above right] {$1$} (a);
\draw (a) -- node[edge label, right] {$2$} (b);
\draw (b) -- node[edge label, below] {$1$} (c);
\draw (c) -- node[edge label, left] {$2$} (d);
\draw (d) -- node[edge label, above left] {$1$} (i);

\node[v] (p1) at (1.7,1.6) {};
\node[v] (p2) at (2.6,1.6) {};
\node[v] (p3) at (3.6,1.6) {};
\node[v,label=right:$i$] (i) at (4.5,1.6) {};

\draw (j)  -- node[edge label, above right] {$1$} (p1);
\draw (p1) -- node[edge label, below] {$2$} (p2);
\draw (p2) -- node[edge label, above] {$1$} (p3);
\draw (p3) -- node[edge label, below right] {$2$} (i);

\begin{scope}[xshift=5.8cm,yshift=0cm]
  \foreach \ang/\name in {90/e1,30/e2,-30/e3,-90/e4,-150/e5,150/e6}
    \node[v] (\name) at (\ang:1.1) {};

  \draw (e1) -- node[edge label, above right] {$1$} (e2);
  \draw (e2) -- node[edge label, right] {$2$} (e3);
  \draw (e3) -- node[edge label, below right] {$1$} (e4);
  \draw (e4) -- node[edge label, below left] {$2$} (e5);
  \draw (e5) -- node[edge label, left] {$1$} (e6);
  \draw (e6) -- node[edge label, above left] {$2$} (e1);
\end{scope}

\begin{scope}[xshift=7.8cm,yshift=0cm]
  \node[v] (f1) at (0,0) {};
  \node[v] (f2) at (1,0) {};
  \node[v] (f3) at (1,1) {};
  \node[v] (f4) at (0,1) {};

  \draw (f1) -- node[edge label, below] {$1$} (f2);
  \draw (f2) -- node[edge label, right] {$2$} (f3);
  \draw (f3) -- node[edge label, above] {$1$} (f4);
  \draw (f4) -- node[edge label, left] {$2$} (f1);
\end{scope}

\begin{scope}[xshift=9.3cm,yshift=0cm]
  \node[v] (u) at (0,1) {};
  \node[v] (w) at (1,1) {};
  \draw (u) -- node[edge label, above] {$3$} (w);
\end{scope}

\begin{scope}[xshift=10.8cm,yshift=0cm]
  \node[v] (r) at (0,1) {};
  \node[v] (s) at (1,1) {};
  \draw (r) -- node[edge label, above] {$3$} (s);
\end{scope}

\end{tikzpicture}
\caption{Possible graph in $\cE_{ij}$}
\label{fig:eij}
\end{figure}

\subsection{Anisotropy properties}

\begin{theorem}
The Parseval core $H_0$ is $2$-conducting. Thus, there is a field extension $\fld$ of $\bbF_2$ such that the generic Artinian reduction $\AA$ of the quotient $R/I$ satisfies anisotropy.
\end{theorem}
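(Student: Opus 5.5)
The plan is to check the definition of $2$-conducting directly, using the combinatorial description $H_0=F_{\mathcal{OC}}$ from Lemma~\ref{l:FO}, and then to invoke Proposition~\ref{p:anisotropyquotient} with $p=2$. Here the ambient polynomial ring has $n=\binom{m}{2}$ variables $z_e$, so exponent vectors are functions $E(K_m)\to\Z_{\geq 0}$. By Lemma~\ref{l:FO}, $\supp(H_0)=\mathcal{OC}$, the set of $2$-regular $w$ whose support is one odd cycle together with doubled edges. The resolution has $F_3=R(-m)$ and codimension $3$, so $s=m-3$. With $p=2$, condition (1) of the definition asks, for each $\a$ of total degree at least $s$, for some $w\in\mathcal{OC}$ with $w\leq \a+\mathbf{1}$ entrywise.

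I will choose $w$ to be the indicator function of a Hamiltonian cycle $C$ of $K_m$. Since $m\geq 3$ is odd, $C$ is an odd cycle covering every vertex, so $w$ is $2$-regular and lies in $\mathcal{OC}$ (with no doubled edges). All entries of $w$ are $0$ or $1$, while $\a+\mathbf{1}\geq \mathbf{1}$ entrywise. Hence condition (1) holds for every $\a$, and the degree hypothesis on $\a$ is not even needed.

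For condition (2), suppose $w'\in\mathcal{OC}$ and $w-w'\in 2\Z^n$. Reducing an element of $\mathcal{OC}$ mod $2$ kills the doubled edges and leaves exactly the indicator of its odd cycle $C'$. So $C'=C$. Since $C$ already passes through all $m$ vertices and $w'$ is $2$-regular, there is no room for doubled edges. Therefore $w'=w$, and $H_0$ is $2$-conducting.

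The anisotropy conclusion then follows from Proposition~\ref{p:anisotropyquotient}. The ring $A'=R/I$ is Gorenstein of codimension $3$ by the Buchsbaum--Eisenbud theorem, with the displayed minimal resolution. The map $\Phi^\flat_3=H_0$ computed in the preceding theorem is a Parseval core. Over the field $\fld=\bbF_2(\theta_{i,j},\theta_j)$ of Section~\ref{ss:quotientsofpolynomialrings}, the proposition gives $\ell^k\w^2\neq 0$ for every nonzero $\w\in\AA^i$ and every $k\leq s-2i$; in particular $\w^2\neq 0$, which is anisotropy.

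I expect no real obstacle here. The substantive input is Lemma~\ref{l:FO} together with the chain map theorem, and the only point to verify carefully is the parity argument that the odd-cycle part of an element of $\mathcal{OC}$ is determined mod $2$, which is what makes condition (2) work.
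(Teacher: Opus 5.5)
Your proposal is correct and follows the paper's proof. You take $\mathbf{m}$ to be the indicator of a Hamiltonian cycle $C$, which makes condition (1) automatic for $p=2$. You settle condition (2) by parity and then invoke Proposition~\ref{p:anisotropyquotient}. Your parity step is slightly cleaner than the paper's detour through $|C\triangle C'|\geq 4$: reducing $w'$ mod $2$ leaves the indicator of its odd cycle, which forces $C'=C$ and then rules out doubled edges. It also makes explicit the case $C'=C$, which the paper leaves implicit.
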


\begin{proof}
Let $n=\binom{m}{2}$, the number of edges of $K_m$. The fine degree of a monomial in $H_0$ corresponding to a function $w\colon E(K_m)\to \Z_{\geq 0}$  is simply $w$, considered as an element of $\Z_{\geq 0}^n$.   To prove property (\ref{i:smallermonomial}) of $2$-conducting, observe that for any $\mathbf{a}$, we may choose any element  $\mathbf{m}$ of $\Z_{\geq 0}^n$ corresponding to a Hamilton cycle $C$. 

We claim that such an $\mathbf{m}$ satisfies property (\ref{i:paritynonoverlapping}). If $\mathbf{m'} \in \operatorname{supp}(H_0)$, then $\mathbf{m'}$ corresponds to a multigraph with $q$ doubled edges and an odd cycle $C'$ of length $m-2q$. We claim that if $C\neq C'$, the symmetric difference $C\triangle C'$ has at least $4$ edges. Observe that it cannot have $2$ since if $(C\setminus e)\cup\{e'\}$ is a  cycle, we must have $e=e'$. Moreover, $|C\triangle C'|\equiv |C|+|C'|\ (\operatorname{mod} 2)$, so $|C\triangle C'|$ must be even.
Now, $\mathbf{m}-\mathbf{m'}$ has odd multiplicity on the edges in $C\triangle C'$, and hence cannot be divisible by $2$.
\end{proof}

\section{Lattice polytopes and simplicial spheres} \label{s:latticepolytopes}

We explain how to deduce the some of the results of \cite{PP} and \cite{APPlattice} from our viewpoint, itself inspired by \cite{APPlattice}. We restrict to characteristic $2$.

\begin{theorem}[The $g$-theorem for simplicial spheres \cite{AHL,PP}] Let $\Sigma$ be a $(d-1)$-dimensional simplicial sphere.  Then, in the Stanley--Reisner ring $R\coloneqq \fld[\Sigma]$ over some field $\fld$, there is a linear system of parameters $\ell_1,\dots,\ell_d\in R^1$ such that for $A\coloneqq R/(\ell_1,\dots,\ell_d)$, the canonical module $\omega_{A}\coloneqq \omega_R/(\ell_1,\dots,\ell_d)$ has the Hard Lefschetz property with respect to some $\ell\in A^1$.
\end{theorem}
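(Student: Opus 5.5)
The plan is to deduce the theorem from Proposition~\ref{p:anisotropy} with $p=2$, applied to $R'\coloneqq \bbF_2[\Sigma]$ and generic linear forms. We take $S=\{x_v\}$ the set of vertex variables and $m_1=\dots=m_d=1$. The field $\fld$ is the purely transcendental extension of $\bbF_2$ by the coefficients $\theta_{i,v}$ and $\theta_v$, with $\ell_i=\sum_v\theta_{i,v}x_v$ and $\ell=\sum_v\theta_v x_v$. The work is in checking the three hypotheses preceding Proposition~\ref{p:anisotropy}.

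\textbf{Checking the hypotheses.}
\begin{enumerate}
\item $R'$ is Cohen--Macaulay of dimension $d$ (Reisner's criterion for spheres) and is generated in degree $1$.
\item Since $\Sigma$ is a sphere, hence Gorenstein* (in fact an orientable homology sphere in characteristic $2$), the canonical module is $\omega_{R'}\cong R'(-0)$-type. Concretely, by Hochster's description via the \v{C}ech complex, $\omega_{R'}$ is identified with the ideal generated by the squarefree monomials $\x^{\sigma}$, $\sigma\in\Sigma$ a facet. Equivalently, it is the interior ideal spanned by monomials whose support is a face $\tau$ with full support on $\tau$; for a sphere this is isomorphic to $R'$ up to shift. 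So $\omega_{R'}$ is an ideal of $R'$.
\item We verify $\Phi^*_{R'}(\z^2)=\z$ on this ideal. Local cohomology of a Stanley--Reisner ring is computed facewise. The \v{C}ech complex splits by fine degree $\mathbf a\in\Z^{V}$, and Frobenius sends degree $\mathbf a$ to $2\mathbf a$. Dually, $\Phi^*$ sends a monomial $\x^{\mathbf b}$ in the interior ideal to $\x^{\mathbf b/2}$ when $2\mid\mathbf b$ and to $0$ otherwise, exactly as in Example~\ref{e:polyring} applied on each face ring $\bbF_2[x_v:v\in\sigma]$. These identifications glue because the \v{C}ech complex of $R'$ is the colimit over faces. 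The one point needing care is that the identification $\omega_{R'}\cong$ interior ideal is compatible with Frobenius. We check this in each multidegree supported on a facet, where it reduces to the polynomial case, and in characteristic $2$ no sign issues arise.
\end{enumerate}

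\textbf{Deducing Hard Lefschetz.} With the hypotheses in place, Proposition~\ref{p:anisotropy} for $p=2$ gives, for nonzero $\z\in(\omega_A)_{i-s}$ with $i\le s/2$, that $\ell^{s-2i}\z^2\ne0$, hence $\ell^{s-2i}\z\neq 0$. Here $s=d$, the socle degree. Thus multiplication $\ell^{d-2i}\colon(\omega_A)_{i-d}\to(\omega_A)_{-i}$ is injective for every $i\le d/2$. Since $\omega_A$ is dual to $A$ and $A$ is Gorenstein with $\dim A^i=\dim A^{d-i}$, injectivity between spaces of equal dimension gives isomorphisms. This is Hard Lefschetz for $\omega_A$, over the field $\fld$ with the generic $\ell_i$ and $\ell$.

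The main obstacle is the third hypothesis: making rigorous that the Frobenius trace on $\omega_{\bbF_2[\Sigma]}$, transported to the ideal, is literally the halving-exponent map. I would do this in fine degrees through Hochster's formula, comparing with the polynomial case on each facet. One subtlety remains: the ring structure on $\omega_{R'}$ must be the one in which $\z^2$ makes sense, including for non-monomial $\z$. In characteristic $2$, $(\sum c_\alpha \x^{\alpha})^2=\sum c_\alpha^2\x^{2\alpha}$, and $\Phi^*$ is $p^{-1}$-linear on scalars (as in the definition of $\Phi_{\overline R}$). So the monomial case suffices.
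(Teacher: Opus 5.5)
Your overall route is the paper's: apply Proposition~\ref{p:anisotropy} with $p=2$ and $S$ the vertex variables, then upgrade injectivity of $\ell^{d-2i}\colon(\omega_A)_{i-d}\to(\omega_A)_{-i}$ to bijectivity using the Gorenstein symmetry of $A$. The gap is in your check of hypotheses (2)--(3), which you rightly single out as the crux. The canonical module of $R'=\bbF_2[\Sigma]$ is \emph{not} the ideal generated by the facet monomials.

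By Hochster's formula, $H^d_{\m}(R')_{-\mathbf{b}}\cong\widetilde{H}^{d-|F|-1}(\operatorname{lk}F;\bbF_2)$ for $\mathbf{b}\geq 0$ with $F=\supp(\mathbf{b})\in\Sigma$. For a sphere this is $\bbF_2$ for \emph{every} face $F$, including $F=\varnothing$. Hence $\omega_{R'}=R'$ on the nose, generated by $1$ in fine degree $\mathbf{0}$; this is consistent with your ``$R'(-0)$'' and with $s=d$.

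The facet ideal is the answer for a simplex, i.e.\ for the polynomial ring of Example~\ref{e:polyring}. For a sphere it is not even isomorphic to $R'$ up to shift: for the boundary of a triangle, $(xy,yz,xz)\subset\bbF_2[x,y,z]/(xyz)$ needs three generators. This wrong ideal is exactly what your proposed verification produces. The polynomial ring on a facet $\sigma$ has top local cohomology only in multidegrees that are negative on all of $\sigma$. So comparing with Example~\ref{e:polyring} facet by facet only sees the facet-ideal part of $\omega_{R'}$, and says nothing about $\Phi^*_{R'}$ on monomials of smaller support, such as $1$. Those monomials are indispensable: Proposition~\ref{p:anisotropy} needs $\Phi^*_{R'}(\z^2)=\z$ for all $\z\in\omega_{R'}$, and already the case $i=0$ of Hard Lefschetz concerns the image of $1$ in $(\omega_A)_{-d}$.

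The repair is short, and it is what the paper's appeal to the \v{C}ech computation amounts to. The map $\Phi^*_{R'}\colon\varphi_*\omega_{R'}\to\omega_{R'}$ is $R'$-linear and halves fine degrees, killing those not divisible by $2$. Hence $\Phi^*_{R'}(\x^{\mathbf{a}})=0$ unless $2\mid\mathbf{a}$, and $\Phi^*_{R'}(\x^{2\mathbf{b}})=\x^{\mathbf{b}}\,\Phi^*_{R'}(1)$, so everything reduces to $\Phi^*_{R'}(1)=1$. This holds because the degree-$\mathbf{0}$ strand of the \v{C}ech complex of $R'$ is the shifted augmented cochain complex of $\Sigma$, with top cohomology $\widetilde{H}^{d-1}(\Sigma;\bbF_2)\cong\bbF_2$, and Frobenius acts on this strand as the identity ($1\mapsto 1^2$).

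A facet-by-facet argument can be made to work, but only through the localizations $R'_{x_\sigma}=\bbF_2[x_v^{\pm1}: v\in\sigma]$, into whose product $\omega_{R'}=R'$ embeds. It does not work through the polynomial rings $\bbF_2[x_v: v\in\sigma]$. With the halving formula on all of $R'$, $\Phi^*_{R'}(\z^2)=\z$ follows as you say, and the remainder of your argument goes through.
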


\begin{proof}
Set $R'\coloneqq \bbF_2[\Sigma]$.
  Let $S=S_1=\dots=S_d=\{x_v\mid v\in\Sigma\}$, the degree $1$ generators of $\fld[\Sigma]$, and let $\fld$ be as above. By the computation of $\omega_{R'}\cong H_{\m}(R)^\vee$ in say \cite[Section~5.3]{BrunsHerzog}, using the \v{C}ech complex, $\omega_R$ can be identified with $R$. Moreover, the $p$-power map $\varphi$ induces a Frobenius trace $\Phi_{R'}$ on $\omega_{R'}$ by duality. This map is given by 
  \[\Phi_{R'}(\x^\a)=
  \begin{cases}
    \x^{\a/p} &\text{if }p\mid \a\\
    0 &\text{otherwise}
  \end{cases} \]
 for $\x^\a\in R'$. Clearly $\Phi_{R'}(\z^p)=\z$.
  Take the generic choice of $\ell_i$ and $\ell$ as above.
   
  By Proposition~\ref{p:anisotropy}, the multiplication map $\ell^{d-2i}\cdot \colon(\omega_A)_{i-d}\to (\omega_A)_{-i}$ is injective. Since $A$ is Gorenstein, the domain and target have the same dimensions and the map is an isomorphism. 
\end{proof}

Recall that for a lattice polytope $P\subset\R^d$, the cone $CP$ is defined by $CP\coloneqq \operatorname{Cone}(P\times\{1\})\subset\R^{d+1}$. The polytope $P$ is {\em IDP} if $CP$ is generated as a semigroup by the lattice points in $P\times\{1\}$. We write $\fld[CP]$ for the semigroup algebra on $CP\cap\Z^{d+1}$.

\begin{theorem}[\cite{APPlattice}] Let $P$ be a $d$-dimensional IDP polytope. Then, in the semigroup algebra $R\coloneqq \fld[CP]$ over some characteristic $2$ field $\fld$, there is a linear system of parameters $\ell_1,\dots,\ell_{d+1}\in R^1$,  such that for  $A\coloneqq R/(\ell_1,\dots,\ell_{d+1})$, the map $\ell\cdot\colon (\omega_{A})_{i-d}\to (\omega_{A})_{i-d+1}$ is injective for $i<\frac{d}{2}$ for some $\ell\in A^1$. Consequently, the $h^*$ vector of $P$ satisfies
\[h^*_{d}\leq h^*_{d-1}\leq \dots \leq h^*_{\lceil d/2\rceil}.\]
\end{theorem}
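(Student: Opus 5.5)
The plan is to reduce the statement to Proposition~\ref{p:anisotropy}, following the pattern of the proof of the $g$-theorem above. I set $R'\coloneqq \bbF_2[CP]$, the semigroup ring on $CP\cap\Z^{d+1}$. It is a $(d+1)$-dimensional normal affine semigroup ring, hence Cohen--Macaulay by Hochster's theorem. The IDP hypothesis says exactly that $R'$ is generated in degree $1$ by the monomials $x_v$, $v\in P\cap\Z^d$. I take $S=S_1=\dots=S_{d+1}$ to be this set of degree-$1$ generators, all $m_i=1$, and let $\fld$, $\ell_1,\dots,\ell_{d+1}$, $\ell$ be the generic choices of Section~\ref{s:parsevalimpliesanisotropy}.

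Next I would verify the two remaining hypotheses.

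\begin{itemize}
\item By Danilov--Stanley, $\omega_{R'}$ is the ideal spanned by the monomials in the interior of $CP$, so it sits inside $R'$ as an ideal.
\item Computing local cohomology with the $\Z^{d+1}$-graded \v{C}ech complex (as in \cite[Section~6.3]{BrunsHerzog}), the Frobenius trace on $\omega_{R'}$ is the toric one. It sends $\x^{\a}\mapsto \x^{\a/p}$ if $p\mid \a$ (where $\a/p$ is again an interior lattice point, since the interior is a cone) and $\x^{\a}\mapsto 0$ otherwise, exactly as in Example~\ref{e:polyring}.
\end{itemize}

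The second point gives $\Phi^*_{R'}(\z^p)=\z$ directly on monomials. For a general $\z=\sum c_\a \x^\a$ it also holds, because $\z^p=\sum c_\a^p\x^{p\a}$ in characteristic $p$ and $\Phi^*$ is $p^{-1}$-linear over the base field. This identification of $\omega_{R'}$ with the interior ideal, together with checking that the dual of Frobenius really is the toric trace, is the step I expect to need the most care. The main risk is the degree shift and the sign in the duality pairing.

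Then, with $s=d$ (the top degree of $A$, since $h^*$ has degree at most $d$), Proposition~\ref{p:anisotropy} with $p=2$ gives the following. For nonzero $\z\in(\omega_A)_{i-d}$ with $2i\le d$ and any $k\le d-2i$, we have $\ell^k\z^2\neq 0$, and hence $\ell^{d-2i}\z\neq0$. Taking $k$ with $d-2i\ge 1$, i.e.\ $i<d/2$, gives in particular $\ell\z\neq 0$. This is the claimed injectivity of $\ell\cdot\colon(\omega_A)_{i-d}\to(\omega_A)_{i-d+1}$.

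Finally, $\dim(\omega_A)_{-j}=\dim A^j=h^*_j$, by Matlis duality for the Artinian ring $A$ together with the fact that $A$ has Hilbert function $h^*(P)$, because $\ell_1,\dots,\ell_{d+1}$ is a regular sequence on the Cohen--Macaulay ring $R$. Injectivity therefore yields $h^*_{d-i}\le h^*_{d-i-1}$ for $i<d/2$, which chains together to $h^*_d\le h^*_{d-1}\le\dots\le h^*_{\lceil d/2\rceil}$. The Hilbert function, and hence the inequality, does not depend on the characteristic, so it holds for the $h^*$-vector of $P$ in general.
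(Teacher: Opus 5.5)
\textbf{Same route as the paper, but the degree bookkeeping fails.} Your reduction is the paper's own: $R'=\bbF_2[CP]$, IDP for generation in degree~$1$, $\omega_{R'}$ the interior ideal with the toric trace so that $\Phi^*_{R'}(\z^p)=\z$, then Proposition~\ref{p:anisotropy} with $S=S_1=\dots=S_{d+1}$ the degree-$1$ generators. The gap is exactly the degree shift you were worried about, at the moment you invoke that proposition.

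Since $\omega_{R'}$ is the interior ideal, it has nothing in $R'$-degree $0$. A class $\z\in(\omega_A)_{i-d}$ is represented by a polynomial of $R'$-degree $i+1$, and $(\omega_A)_0$ sits in $R'$-degree $d+1$. So $\ell^k\z^2$ lives in $R'$-degree $2i+2+k$. Your asserted $\ell^{d-2i}\z^2\neq0$ would lie in degree $d+2$, where $\omega_A$ vanishes, so that step is false for every $d$. The range $pi+k\le s$ in Proposition~\ref{p:anisotropy}, which the paper's one-line proof also relies on, is calibrated for $\omega_{R'}\cong R'$ as in the sphere case; for the interior ideal it overshoots by $(p-1)(d+1-s)\ge 1$. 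Separately, $s=\deg h^*$ equals $d$ only when $P$ has an interior lattice point. That slip is harmless, since only $(\omega_A)_{i-d}$ matters.

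\textbf{The correct range and its consequences.} Redo the proof of Proposition~\ref{p:anisotropyquotient} with $H_0=1$ in this setting. Choose a monomial $\t$ of degree $d-i$ with $\t\z\neq0$ in $(\omega_A)_0$. By IDP, $\t^2$ is a product of $2d-2i$ degree-$1$ generators. You need $d+1$ of them for $\x^B$ and one for $\ell$. So the $k=1$ case gives $\ell\z^2\neq0$, hence $\ell\z\neq 0$, precisely when $2(i+1)+1\le d+1$, i.e.\ $2i+2\le d$.

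For $d$ even this is your range $i<d/2$. For $d$ odd it is only $i<(d-1)/2$, and the extra case $i=(d-1)/2$ in the statement is actually false. For $d=1$ and $P=[0,3]$ one has $h^*=(1,2)$, so $\ell\cdot\colon(\omega_A)_{-1}\to(\omega_A)_0$ maps a $2$-dimensional space to a $1$-dimensional one. Likewise, $P=6\Delta\times[0,1]$ with $\Delta$ the unimodular triangle has $h^*=(1,52,55,0)$, contradicting $h^*_2\le h^*_1$.

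Your chaining step hides this problem: injectivity for all $i<d/2$ would chain down to $h^*_{\lfloor d/2\rfloor}$, not $h^*_{\lceil d/2\rceil}$. With the corrected range you get $h^*_{d-i}\le h^*_{d-i-1}$ for $0\le i\le\lfloor d/2\rfloor-1$. That is exactly $h^*_d\le\dots\le h^*_{\lceil d/2\rceil}$, so the $h^*$ conclusion survives.
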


\begin{proof}
  Let $R'=\bbF_2[CP]$. By the computation of the canonical module in say \cite[Section~6.3]{BrunsHerzog}, by means of the complex $L^\bullet$, we obtain 
  $\omega_{R'}\cong \bbF_2[CP^\circ]$, the ideal in $\bbF_2[CP]$ generated by lattice points in the interior of $CP$. As in the Stanley--Reisner case, $\Phi_{R'}(\z^p)=\z$. The conclusion
   follows from Proposition~\ref{p:anisotropy} with $S=S_1=\dots=S_{d+1}$ equal to a set of degree $1$ generators of $\bbF_2[CP]$.
\end{proof}

The symmetry of the $h^*$-vector for reflexive polytopes immediately yields the following:

\begin{corollary}[The Ohsugi--Hibi Conjecture \cite{APPlattice}] If $P$ is a IDP reflexive polytope, then the $h^*$-vector is unimodal.
\end{corollary}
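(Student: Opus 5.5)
The plan is to use the symmetry of the $h^*$-vector of a reflexive polytope together with the inequalities of the preceding theorem. For an IDP polytope $P$, the Ehrhart ring $\bbF_2[CP]$ is standard graded. By Hochster's theorem it is Cohen--Macaulay, so $h^*_i=\dim_\fld A^i$ for the Artinian reduction $A$. Hibi's theorem says that reflexivity of $P$ is equivalent to $\fld[CP]$ being Gorenstein, i.e.\ to the symmetry $h^*_i=h^*_{d-i}$ for all $i$. This is equivalent to $\omega_{R'}\cong R'(-1)$, because $CP^\circ$ is generated by the single interior lattice point at height $1$ (the origin of the reflexive polytope).

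Next I would invoke the theorem just proved. It gives
\[h^*_{d}\leq h^*_{d-1}\leq \dots \leq h^*_{\lceil d/2\rceil},\]
obtained from the injectivity of $\ell\cdot$ on $\omega_A$ in degrees $i-d$ with $i<d/2$, using $\dim(\omega_A)_{-j}=h^*_j$ by graded duality. Applying the symmetry $h^*_j=h^*_{d-j}$ turns this chain into
\[h^*_0\leq h^*_1\leq\dots\leq h^*_{\lfloor d/2\rfloor}.\]

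Combining the two chains shows that $h^*$ increases up to the middle and decreases after it, i.e.\ it is unimodal. When $d$ is even the two chains share the middle term; when $d$ is odd, symmetry gives $h^*_{\lfloor d/2\rfloor}=h^*_{\lceil d/2\rceil}$, so the chains still join. The numbers $h^*_i$ are independent of the field, since they are computed from the Ehrhart series, so the characteristic-$2$ statement suffices.

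The only possible subtlety is the index bookkeeping at the middle degree. It is handled as above.
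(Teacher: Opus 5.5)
Your argument is correct and matches the paper's, which deduces the corollary in one line from the chain $h^*_d\leq\dots\leq h^*_{\lceil d/2\rceil}$ of the preceding theorem together with the symmetry $h^*_i=h^*_{d-i}$ for reflexive polytopes; your handling of the middle index is also right. One small imprecision: $\fld[CP]$ being Gorenstein only says that $P$ is Gorenstein of some index, so it is not equivalent to reflexivity, but you only use that reflexivity gives the symmetry $h^*_i=h^*_{d-i}$ about $d/2$ (Hibi), and that is correct.
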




\begin{remark} These arguments applied to a generic homogeneous system of parameters $\ell_1,\dots,\ell_d$ in the Gorenstein ring $\fld[x_1,\dots,x_d]$ also give the Hard Lefschetz property for $\fld[x_1,\dots,x_d]/(\ell_1,\dots,\ell_d)$ \cite{AOPP:complete}.
This Hard Lefschetz for generic complete intersections also follows from that of any example as noted in \cite[Example~1.2]{DimcaIlardi}. The example of  $\fld[x_1,\dots,x_n]/(x_1^{a_1},\dots,x_n^{a_n})$ is there attributed to Stanley \cite{Stanley:Hilbertfunctions} and Watanabe \cite{Watanabe}.
\end{remark}
%
%
%

\section{Additional examples} \label{s:additional}

In this section, we consider some additional Gorenstein examples with minimal free resolutions $F_\bullet$. We find explicit expressions for the top piece of the chain map $\Phi^{\flat}_\bullet\colon\varphi^*F_\bullet \to F_\bullet$, which is described by polynomials, each denoted by $H_0$. While the chain map is only unique up to chain homotopy, we expect these $H_0$'s to have interesting combinatorial content as in Section~\ref{s:pfaffians}.

\subsection {The ring attached to Pfaffians of a $(5\times 5)$-matrix in characteristic $3$}   \label{sec!parsevalsinchar3}

We  use the  product of disjoint cycles notation for the elements of $\mathfrak{S}_5$, the symmetric group on $\{1,\dots,5\}$.
Let 
\[P_3\coloneqq \{(a,b,c) \mid  (a,b,c)  \text { is a permutation of }  (3,4,5)\}\]
 and consider the bijection \cite{GP},
\[
    \Psi\colon  P_3  \to   \{\text{order $20$  subgroups of }  \mathfrak{S}_5\},\quad  \Psi  ( a,b,c) =   \langle (1,2,a,b,c), (2,a,c,b) \rangle.
\]

Let  $\mathbbm{k}$ have characteristic $p=3$, and consider the polynomial ring  $R\coloneqq\mathbbm{k}[z_{i,j}]$ 
in $10$ variables $z_{i,j}$ with  $1 \leq i  <  j \leq  5$.
For $j < i$ we set $z_{i,j} = - z_{j,i}$. Let 
$\mathfrak{S}_5$ act
on $\mathbbm{k}[z_{i,j}]$  by $\sigma \cdot z_{i,j}  = z_{\sigma(i), \sigma(j)}$.
For a polynomial $\eta$ we denote by  $Q(\eta)$ the sum of the elements of the $\mathfrak{S}_5$-orbit of $\eta$. 

We denote by $A$ the $5 \times 5$ skew-symmetric matrix with $(i,j)$-entry equal to
$z_{i,j}$ when $i < j$ and equal to $0$ when $i=j$.  Let $I\subset R$ be the Gorenstein ideal generated by the $(4\times 4)$-Pfaffians of $A$.

Consider the following five degree $10$ monomials
\begin{align*}
  \eta_1  & =   z_{1,3}z_{1,4}z_{1,5}^2z_{2,3}z_{2,4}^2z_{2,5}z_{3,4}z_{3,5},   &  
             \eta_2  &  =    z_{1,3}z_{1,4}z_{1,5}^2z_{2,3}z_{2,4}z_{2,5}^2z_{3,4}^2,  \\
       \eta_3   & =    z_{1,4}^2z_{1,5}^2z_{2,3}^2z_{2,5}^2z_{3,4}^2,  & 
       \eta_4  &  =    z_{1,4}z_{1,5}^3z_{2,3}^2z_{2,4}z_{2,5}z_{3,4}^2,  \\
      \eta_5 & =   z_{1,5}^4z_{2,3}^2z_{2,4}^2z_{3,4}^2.       &   &        
\end{align*}


Assume $1 \leq t \leq 5$. 
Then  $\eta_t$ is balanced, in the sense that 
if we substitute  $a_ia_j$ for $z_{i,j}$ for all $i,j$ then the resulting monomial is
either  $\; \prod_{s=1}^5 a_s^4$ or  $\; -\prod_{s=1}^5 a_s^4$.

For  $(a,b,c) \in P_3$, consider the balanced monomial
\[
     \theta_{(a,b,c)} =  z_{2,a}^3 z_{b,c}^3 \prod_{i=2}^5 z_{1,i}.
\]  
We denote by  $U_{(a,b,c)}$ the  sum of the elements  of  the orbit of $\theta_{(a,b,c)}$
under the action of  the subgroup  $\Psi (a,b,c)$  of   $\mathfrak{S}_5$.

\begin{prop}  \label{[prop!klaa}
The cardinality $6$ set 
$\{U_{(a,b,c)} \mid (a,b,c) \in P_3\}$
 is stable under the action of       $\mathfrak{S}_5$, consists of a single 
$\mathfrak{S}_5$-orbit, and the stabilizer subgroup of the element  $U_{(a,b,c)}$ is the
order $20$ subgroup $\Psi (a,b,c)$  of   $\mathfrak{S}_5$.
 \end{prop}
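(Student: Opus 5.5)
The core of the argument is an equivariance identity for the $U$'s, namely
\[
\sigma\cdot U_{(a,b,c)}=U_{\sigma\star(a,b,c)}\quad\text{for all }\sigma\in\mathfrak{S}_5,
\]
where $\star$ is a suitable action of $\mathfrak{S}_5$ on $P_3$, transported from the conjugation action on order-$20$ subgroups via the bijection $\Psi$ of \cite{GP}. The order-$20$ subgroups of $\mathfrak{S}_5$ are the Sylow-$5$ normalizers $N(\langle\tau\rangle)\cong C_5\rtimes C_4$. There are six of them, all conjugate, and each is self-normalizing. Hence conjugation acts transitively on the six subgroups with stabilizer $\Psi(a,b,c)$ itself. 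The plan is to show that the assignment $\Psi(a,b,c)\mapsto U_{(a,b,c)}$ is $\mathfrak{S}_5$-equivariant, i.e. $\sigma\cdot U_{(a,b,c)}=U_{(a',b',c')}$ whenever $\sigma\Psi(a,b,c)\sigma^{-1}=\Psi(a',b',c')$, and that it is injective. All three claims follow from this.

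\textbf{Step 1 (invariance).} By construction $U_{(a,b,c)}$ is the sum over the $\Psi(a,b,c)$-orbit of $\theta_{(a,b,c)}$. Therefore it is fixed by $\Psi(a,b,c)$, provided the signs work out. Since $z_{j,i}=-z_{i,j}$, the action of $\sigma$ on a monomial can introduce a sign. So we must check that $\Psi(a,b,c)$ acts on the orbit by genuine permutation of the monomials, i.e. that $g\cdot\theta=\theta$ forces sign $+1$ for every $g$ in the stabilizer. Equivalently, $U$ must be interpreted as a sum over the coset space $\Psi/\mathrm{Stab}(\theta)$ with well-defined signs. I will compute $\mathrm{Stab}_{\Psi}(\theta_{(a,b,c)})$ directly. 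In $\theta$ the vertex $1$ has four single edges and there are two cubed edges $\{2,a\},\{b,c\}$, so the only candidate stabilizing permutations fix $1$ and preserve the pair $\{\{2,a\},\{b,c\}\}$. Intersecting this set with $\Psi(a,b,c)$ should give a small group, likely of order $2$, for which the sign is checked by hand.

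\textbf{Step 2 (equivariance).} For $\sigma\in\mathfrak{S}_5$ we have $\sigma\cdot U_{(a,b,c)}=\sum_{g\in\Psi}(\sigma g\sigma^{-1})\cdot(\sigma\cdot\theta_{(a,b,c)})$. This is the orbit sum of $\sigma\cdot\theta_{(a,b,c)}$ under $\sigma\Psi\sigma^{-1}$. So it suffices to show that $\sigma\cdot\theta_{(a,b,c)}$ lies in the $\Psi(a',b',c')$-orbit of $\theta_{(a',b',c')}$, with the right sign, where $\sigma\Psi\sigma^{-1}=\Psi(a',b',c')$. Because $\mathfrak{S}_5$ is generated by transpositions, it is enough to check this for the generators $(1,2)$, $(2,3)$, $(3,4)$, $(4,5)$. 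Each check is a finite computation. The sign bookkeeping is the main obstacle: in characteristic $3$, $-1\neq1$, so each transposition contributes a sign depending on how many oriented edges it reverses. I would first handle the generators $(3,4)$ and $(4,5)$, which permute $\{3,4,5\}$ and act naturally on $P_3$. Then I would treat $(1,2)$ and $(2,3)$ by explicitly locating $\sigma\cdot\theta$ in an orbit. As a fallback this step can be verified in Macaulay2, as the paper does elsewhere \cite{AKOPP51}.

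\textbf{Step 3 (distinctness and conclusion).} Next I show that the six $U_{(a,b,c)}$ are pairwise distinct, and nonzero despite characteristic-$3$ cancellation. The $\Psi$-orbit of $\theta$ has size $20/|\mathrm{Stab}|$, and its monomials are distinct, so $U\ne0$. Distinctness follows from comparing supports, for instance which monomials of the form $z_{2,x}^3z_{y,w}^3\prod_i z_{1,i}$ occur. With equivariance, the set is $\mathfrak{S}_5$-stable, and it is a single orbit because conjugation is transitive on the order-$20$ subgroups. The stabilizer of $U_{(a,b,c)}$ corresponds under the bijection to the normalizer of $\Psi(a,b,c)$, which is $\Psi(a,b,c)$ itself. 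This yields orbit size $120/20=6$, consistent with the count.
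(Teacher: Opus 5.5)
Your route is genuinely different from the paper's, which simply verifies all three claims by a Macaulay2 orbit computation \cite{AKOPP52}. You instead derive them from the structure of $\mathfrak{S}_5$ (six conjugate, self-normalizing order-$20$ subgroups) together with equivariance and injectivity of $\Psi(a,b,c)\mapsto U_{(a,b,c)}$. This is sound, and the sign checks you defer do succeed.

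In fact Step~2 needs no sign bookkeeping at all. For $\pi$ fixing $1$ and $2$ you have exactly $\pi\cdot\theta_{(a,b,c)}=\theta_{(\pi a,\pi b,\pi c)}$ and $\pi\Psi(a,b,c)\pi^{-1}=\Psi(\pi a,\pi b,\pi c)$, hence $\pi\cdot U_{(a,b,c)}=U_{(\pi a,\pi b,\pi c)}$. Label $1,2,a,b,c$ as $0,\dots,4\in\mathbb{Z}/5$. This identifies $\Psi(a,b,c)$ with the affine group $x\mapsto ux+v$, so only its identity fixes both $1$ and $2$. Counting then gives $\mathfrak{S}_5=\{\pi:\pi(1)=1,\ \pi(2)=2\}\cdot\Psi(a,b,c)$. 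Moreover $U_{(a,b,c)}$ is $\Psi(a,b,c)$-invariant for free, since it is a sum over a $\Psi(a,b,c)$-stable set; signs only matter for $U_{(a,b,c)}\neq 0$. Writing $\sigma=\pi h$ therefore yields $\sigma\cdot U_{(a,b,c)}=U_{(\pi a,\pi b,\pi c)}$. This gives stability, transitivity and your equivariance at once, with no checks needed for $(1,2)$ and $(2,3)$.

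For the stabilizer, $\Psi(a,b,c)$ is maximal in $\mathfrak{S}_5$. A proper intermediate subgroup would have index $2$, i.e.\ be $\mathfrak{A}_5$, which misses the odd $4$-cycle $(2,a,c,b)$; or index $3$, which $\mathfrak{S}_5$ does not have. So it suffices that $U_{(a,b,c)}$ is not $\mathfrak{S}_5$-invariant. By your Step~1, the stabilizer of $\theta_{(a,b,c)}$ in $\Psi(a,b,c)$ is $\{e,(2,c)(a,b)\}$, acting with sign $+1$, so $U_{(a,b,c)}$ has exactly $10$ monomials. But $\mathfrak{S}_5$ permutes transitively the $15$ monomials of this shape: a center joined singly to the other four vertices, plus a cubed perfect matching on them. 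Orbit--stabilizer then gives six distinct elements.

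Two corrections. First, the distinctness witness you suggest in Step~3 fails: the center-$1$ parts of $U_{(a,b,c)}$ and $U_{(b,a,c)}$ coincide, both being $\theta_{(a,b,c)}+\theta_{(b,a,c)}$ (e.g.\ for $U_{(3,4,5)}$ and $U_{(4,3,5)}$). You must compare other centers or use the maximality argument. Second, $\sum_{g\in\Psi}$ in Step~2 counts each orbit element twice.

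Your approach buys a human-checkable explanation of why the stabilizer is exactly $\Psi(a,b,c)$. The paper's computation buys certainty without structural insight.
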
  

\begin{proof}   The proof was given by Macaulay2. The code is contained in the file \cite{AKOPP52}.
\end{proof}

\begin{example}  We have 
\[
   U_{(3,4,5)} = L_1 + L_2 + L_3 + L_4,  
\]
where   
 \[
    L_1 =  z_{1,2}z_{1,5}^{3}z_{2,3}z_{2,4}z_{2,5}z_{3,4}^{3}+z_{1,3}z_{1,4}^{3}z_{2,3}z_{2,5}^{3}z_{3,4}z_{3,5}+z_{1,2}z_{1,3}z_{1,4}z_{1,5}z_{2,4}^{3}z_{3,5 }^{3}
 \]     
  \[
    L_2 =  z_{1,2}z_{1,4}^{3}z_{2,3}z_{2,4}z_{2,5}z_{3,5}^{3}+z_{1,4}z_{1,5}^{3}z_{2,3}^{3}z_{2,4}z_{3,4}z_{4,5}+z_{1,3}^{3}z_{1,4}z_{2,4}z_{2,5}^{3}z_{3,4}z_{4,5}
  \]
  \[
    L_3 =     z_{1,3}^{3}z_{1,5}z_{2,4}^{3}z_{2,5}z_{3,5}z_{4,5}+z_{1,2}^{3}z_{1,5}z_{2,5}z_{3,4}^{3}z_{3,5}z_{4,5}+z_{1,2}z_{1,3}z_{1,4}z_{1,5}z_{2,3}^{3} z_{4,5}^{3}
  \]  
 \[
    L_4 =      z_{1,2}^{3}z_{1,3}z_{2,3}z_{3,4}z_{3,5}z_{4,5}^{3}.
\]
\end{example}

We have the following theorem.

\begin{theorem}  \label{thm!princharacteristic3}
   Assume $(a,b,c) \in P_3$ is fixed. We set 
   \[
            H_0 =  \sum_{t=1}^5  Q(\eta_t) + U_{(a,b,c)}.
    \]
 We consider the minimal graded free resolution 
     \[
             F_\bullet:  \quad  0 \to R(-5)  \to R(-3)^{5} 
                            \to  R(-2)^5  \to  R
     \]
           of  $R/I$   as  $R$-module, and its third  Frobenius power 
\[
        \varphi^*F_\bullet:  \quad  0 \to R(-15)  \to R(-9)^5 
                            \to  R(-6)^5  \to  R
     \]   
 Then there exists   a chain map $\Phi^{\flat}_\bullet\colon \varphi^*F_\bullet  \to F_\bullet$ such that $\Phi^{\flat}_0$ is the identity map and $\Phi^{\flat}_3\colon\varphi^*F_3\cong R(-15)\to R(-5)\cong F_3$ 
           is multiplication by $H_0$.
\end{theorem}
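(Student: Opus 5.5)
We will build the chain map $\Phi^\flat_\bullet$ one square at a time, as in the characteristic-$2$ Pfaffian theorem of Section~\ref{s:pfaffians}, but this time in characteristic $3$ with $m=5$. The degree $3$ part of the resolution is $F_3=R(-5)$, mapped to $F_2$ by the vector of Pfaffians $P=(P_1,\dots,P_5)$. The differentials of $\varphi^*F_\bullet$ are $(P^t)^{[3]}$, $A^{[3]}$ and $P^{[3]}$. We take $\Phi^\flat_0=\operatorname{Id}$. For $\Phi^\flat_1$ we need $P^t\Phi^\flat_1=(P^t)^{[3]}$, and the natural choice is the diagonal matrix with entries $P_i^2$, in direct analogy with the $p=2$ case.

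Next we choose $\Phi^\flat_2$, a $5\times5$ matrix of degree-$7$ polynomials satisfying $A\Phi^\flat_2=\Phi^\flat_1A^{[3]}$. Entrywise this reads
\[\sum_j z_{ij}(\Phi^\flat_2)_{jk}=P_i^2z_{ik}^3 .\]
Such a matrix exists because $\Phi^\flat_1A^{[3]}$ composed with $P^t$ vanishes and $F_\bullet$ is exact. We will find an explicit solution in one of two ways. The first option is to generalise the operator $D_k$: apply a second-order differential operator built from $z_{uk}z_{kv}\partial/\partial z_{uv}$ to $P_i^2$ and use $\sum_j z_{ij}P_j=0$, as was done before. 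The second option is to solve the linear system directly by computer, as the paper does elsewhere with Macaulay2 \cite{AKOPP52}.

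The left square then determines the last component. We need a scalar $H$ with
\[\Phi^\flat_2P^{[3]}=PH,\]
that is,
\[\sum_k(\Phi^\flat_2)_{ik}P_k^3=P_iH\quad\text{for all }i.\]
The left side is killed by $A$, and $P$ generates the kernel of $A$, so such an $H$ exists and is unique once $\Phi^\flat_2$ is fixed. Changing $\Phi^\flat_2$ by a homotopy changes $H$ by an element of $I$. Every other valid choice of $\Phi^\flat_3$ therefore differs from the stated one by a polynomial of the form $\sum_k h_kP_k$. Hence the theorem reduces to exhibiting one $\Phi^\flat_2$ whose induced $H$ equals $\sum_t Q(\eta_t)+U_{(a,b,c)}$ exactly.

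To do this we would compute $H$ for our chosen $\Phi^\flat_2$. If it differs from the stated $H_0$, we would show that $H-H_0\in I$ with a degree-$10$ representation $\sum_kh_kP_k$. We would then modify $\Phi^\flat_2$ by the homotopy $\Phi^\flat_2\mapsto\Phi^\flat_2+s$, where $s$ is chosen so that $As=0$ and $sP^{[3]}$ realises the correction. Replacing $(a,b,c)$ by another permutation changes $U$ by an element of $I$, and the claim is consistent with Proposition~\ref{[prop!klaa}: the six $U$'s form one orbit, so the independence of the choice follows from the orbit statement together with the fact that the differences lie in $I$.

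The main obstacle is precisely this identification. We must verify the polynomial identity in degree $15$ for each $i$, namely
\[\sum_k(\Phi^\flat_2)_{ik}P_k^3=P_iH_0 ,\]
or equivalently the membership $H-H_0\in I$, in characteristic $3$. Here the combinatorial cancellation arguments of Section~\ref{s:pfaffians}, namely Smith's theorem and the counts of odd cycles, have no evident mod-$3$ analogue. I would first attempt a symmetry reduction. Both sides of the identity are $\Psi(a,b,c)$-equivariant, so only orbit representatives of monomials need to be checked, and I would use the balanced-monomial structure of the $\eta_t$ to organise them. In practice I expect the verification to be carried out by a Macaulay2 computation that checks the chain-map identities for the explicit matrices.
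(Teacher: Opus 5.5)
Your overall route matches the paper's, whose proof is purely a Macaulay2 verification: build $\Phi^\flat_0,\Phi^\flat_1,\Phi^\flat_2$ from exactness, then certify $\Phi^\flat_3$ by an explicit computation. However, the reduction you give for that verification is incorrect, and executed as written it would not prove the theorem.

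The indeterminacy of $\Phi^\flat_3$ is the Frobenius power $I^{[3]}=(P_1^3,\dots,P_5^3)$, not $I$. Two chain maps with $\Phi^\flat_0=\operatorname{Id}$ differ by $dh+hd$. In degree $3$, where $F_4=0$, this difference is $h_2P^{[3]}=\sum_k h_kP_k^3$. So the admissible values of $\Phi^\flat_3$ form exactly the coset $H+I^{[3]}$.

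Your own correction step shows the same thing. If $As=0$, then $s=Ph$ for a row vector $h$, since $\ker A=\operatorname{im}P$. Hence $sP^{[3]}=P\cdot\sum_k h_kP_k^3$, and only corrections lying in $I^{[3]}$ can be realised. So the test ``$H-H_0\in I$ with a representation $\sum_k h_kP_k$'' is necessary but far from sufficient: in degree $10$, $I$ is vastly larger than $I^{[3]}$. For the same reason, the identity $\sum_k(\Phi^\flat_2)_{ik}P_k^3=P_iH_0$ is \emph{not} equivalent to $H-H_0\in I$. The correct certificate is $H-H_0\in I^{[3]}$, or a direct check of all three squares with explicit matrices and $\Phi^\flat_3=H_0$.

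The same correction applies to your remark on independence of $(a,b,c)$. The various $U_{(a,b,c)}$ must differ by elements of $I^{[3]}$; membership in $I$ plays no role. The clean argument is equivariance, and it needs no membership input at all.

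Take $\sigma\in\mathfrak{S}_5$ and the comparison isomorphism $S\colon\sigma(F_\bullet)\to F_\bullet$ normalised by $S_0=\operatorname{Id}$. Then $S\circ\sigma(\Phi^\flat)\circ\varphi^*(S^{-1})$ is again a chain map with identity in degree $0$. Its top component is $\lambda\,\sigma(H)\,\lambda^{-3}=\sigma(H)$, where $\lambda=S_3\in\bbF_3^*=\{\pm1\}$. Hence the class of $\Phi^\flat_3$ modulo $I^{[3]}$ is $\mathfrak{S}_5$-invariant. Since $\sum_tQ(\eta_t)$ is invariant and the six $U$'s form a single orbit, the statement for one $(a,b,c)$ implies it for all.

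Two minor slips:
\begin{itemize}
\item The entries of $\Phi^\flat_2\colon R(-9)^5\to R(-3)^5$ have degree $6$, not $7$.
\item The left-square identity is between polynomials of degree $12$; degree $15$ is only the internal degree in the twisted module.
\end{itemize}
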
  

\begin{proof} 
The proof was given by Macaulay2. The code is contained in the file  \cite{AKOPP53}.
\end{proof}

\subsection{Original Tom}   

The present section contains work related to the codimension $4$ Gorenstein 
ideal $I$ called 
"Original Tom".  Geometrically, the ideal $I$ is the homogeneous ideal of the 
Segre embedding of   $ \mathbb{P}^2 \times \mathbb{P}^2 $
inside $\mathbb{P}^8$, compare \cite{Re} and  \cite[p.~260, Section 5.5]{P2}.
Algebraically, the ideal $I$ is generated  by the $2 \times 2$ minors 
of a generic $3 \times 3$ matrix.

   Assume $\mathbbm{k}$  is a characteristic $2$ field. 
    We consider  the polynomial ring 
    \[
           R \coloneqq  \mathbbm{k} [ z_{i,j} :  1 \leq i , j \leq 3]
    \]
    with the standard grading. Let $G \coloneqq  \mathfrak{S}_3 \times \mathfrak{S}_3$ act on $R$ by  $(\sigma, \tau) \cdot (z_{i,j}) =     z_{\sigma(i),\tau(j)}$.

We consider the $3 \times 3$  matrix $A$ with  $(i,j)$-entry for
equal to  $z_{i,j}$  for all $1 \leq i,j \leq 3$.  We denote by 
$I \subset R$ the ideal generated by all $ 2 \times 2$ minors of $A$.
The ideal $I$ is $G$-stable, in the sense that if
$g \in G$ and $u \in I$ then $g \cdot u \in I$.  It is well-known,
see \cite[p.~16, Section~2.C]{BV}, that
$I$ is a codimension $4$ ideal whose resolution is given by
the Eagon-Northcott complex. We denote by $I^{[2]}$ the second
Frobenius power of $I$.

We consider the polynomials
\[
    \eta_1 = z_{1, 2}z_{1, 3}z_{2, 2}z_{2, 3}z_{3, 1}^2, \quad 
    \eta_2 =   z_{1, 2}z_{1, 3}z_{2, 1}z_{2,3}z_{3, 1}z_{3, 2}, \quad 
    \eta_3       =       z_{1, 2}^2z_{2, 3}^2z_{3, 1}^2.
\]

For $1 \leq i \leq 3$ we denote by $O_i$ the $G$-orbit of the element $\eta_i$.
We have $ \# O_1 = 9$,  $ \# O_2 = 6$,  $ \# O_3 = 6$. Moreover,
\[
   O_3 = \{\eta_3,   z_{1, 3}^2z_{2, 1}^2z_{3, 2}^2, z_{1, 1}^2z_{2, 3}^2z_{3, 2}^2, z_{1, 3}^2z_{2, 2}^2z_{3, 1}^2,  z_{1, 2}^2z_{2, 1}^2z_{3, 3}^2 ,  z_{1, 1}^2z_{2, 2}^2z_{3, 3}^2 \}.
\]

For a subset $S \subset O_3$ of  odd cardinality, we set
\[
   H_{0,S} = \sum_{u \in O_1} u + \sum_{u \in O_2} u  + \sum_{u \in S} u.
\]

\begin{prop} Assume $S \subset O_3$ is a subset of odd cardinality.
    We consider the minimal graded free resolution 
     \[
             F_\bullet:  \quad  0 \to R(-6)  \to R(-4)^9 \to R(-3)^{16} 
                            \to  R(-2)^9  \to  R
     \]
           of  $R/I$   as  $R$-module, and the second Frobenius power 
\[
        \varphi^*F_\bullet:  \quad  0 \to R(-12)  \to R(-8)^9 \to R(-6)^{16} 
                            \to  R(-4)^9  \to  R.
     \]   
     Then there exists 
           a chain map $\Phi_\bullet^{\flat}\colon\varphi^*F_\bullet\to F_\bullet$ such $\Phi_0^\bullet$ is the identity map and that 
           \[\Phi_4^{\flat}\colon \varphi^*F_4\cong R(-12)\to R(-6)\cong F_4\]  
           is multiplication by $H_{0,S}$.
\end{prop}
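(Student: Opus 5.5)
The idea is to avoid building all of $\Phi^\flat_\bullet$ by hand. Instead, I would use the standard description of the top component of a Frobenius lift for a Gorenstein quotient: the class of $\Phi^\flat_4$ modulo $I^{[2]}$ generates the cyclic module $(I^{[2]}:I)/I^{[2]}$, which Fedder/Kunz theory shows is free of rank one over $R/I$ because $R/I$ is Gorenstein. So I would first prove that $H_{0,S}$ represents this generator, and then adjust an arbitrary chain map by a homotopy until its top component is exactly $H_{0,S}$.

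The first step is to understand how much freedom $\Phi^\flat_4$ has. Fix the chain map $\Phi^\flat_\bullet$ given by the comparison theorem, with $\Phi^\flat_0=\mathrm{Id}$, as in the construction before Example~\ref{e:parsevalinpolynomialrings}. Two such lifts differ by a homotopy $h$. Since $F_5=0$, the top components differ by $h_3\circ(\partial_4)^{[2]}$, where $h_3\colon\varphi^*F_3\to F_4$ is arbitrary. By self-duality of the Eagon--Northcott resolution, the entries of $\partial_4$ generate $I$, so the entries of $(\partial_4)^{[2]}$ generate $I^{[2]}$. Hence $\Phi^\flat_4$ is determined exactly modulo $I^{[2]}$, and every element of $\Phi^\flat_4+I^{[2]}$ occurs as the top component of some lift.

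Dualizing the chain map and comparing the resolutions of $R/I^{[2]}$ (which is $\varphi^*F_\bullet$, by flatness of Frobenius on a polynomial ring) and $R/I$ gives the standard identification
\[(I^{[2]}:I)=I^{[2]}+(\Phi^\flat_4).\]
Moreover, $(I^{[2]}:I)/I^{[2]}$ is concentrated in degree $6$ in its lowest piece, where it is $1$-dimensional over $\bbF_2$. It therefore suffices to check three things about $H_{0,S}$.
\begin{enumerate}
\item $H_{0,S}$ has degree $6$; this is immediate.
\item $H_{0,S}\cdot I\subseteq I^{[2]}$.
\item $H_{0,S}\notin I^{[2]}$.
\end{enumerate}
Given these, $H_{0,S}\equiv\Phi^\flat_4 \pmod{I^{[2]}}$, since the only nonzero scalar over $\bbF_2$ is $1$, and the homotopy adjustment above finishes the proof.

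Independence of $S$ is a useful consistency check. For $u,u'\in O_3$, the difference $u-u'$ is the square of a difference of two degree-$3$ "permutation" monomials such as $z_{1,2}z_{2,3}z_{3,1}-z_{1,3}z_{2,1}z_{3,2}$. Such a difference lies in $I$, because on rank-one matrices $z_{i,j}=a_ib_j$ both monomials equal $\prod_i a_ib_i$. Hence $u-u'\in I^{[2]}$, so all $H_{0,S}$ with $|S|$ odd agree modulo $I^{[2]}$, as they must.

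The main obstacle is the colon membership in (2). The polynomial $H_{0,S}$ is $G$-invariant modulo $I^{[2]}$, and $G=\mathfrak S_3\times\mathfrak S_3$ acts transitively on the nine $2\times2$ minors up to sign. So (2) reduces to a single minor, say $m=z_{1,1}z_{2,2}-z_{1,2}z_{2,1}$: one must show $H_{0,S}\,m\in I^{[2]}$. I would first try to do this by a Gröbner basis reduction of $H_{0,S}\,m$ modulo the squared minors, in Macaulay2 as elsewhere in the paper. As a conceptual alternative, I would look for an explicit combinatorial cancellation, writing $H_{0,S}\,m$ as a sum over orbits of $\pm$ (monomial) $\times$ (squared minor), in the spirit of the matching and Hamiltonian-cycle parity arguments of Section~\ref{s:pfaffians}. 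For (3), it suffices to exhibit one monomial of $H_{0,S}$, for instance $\eta_2$, that does not reduce to zero modulo a Gröbner basis of $I^{[2]}$, or to check directly that $H_{0,S}$ has nonzero image in the socle of $R/I^{[2]}$.
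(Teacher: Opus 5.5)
Your reduction is sound, and it takes a genuinely different route from the paper. The paper's proof is only a Macaulay2 computation of the comparison map $\varphi^*F_\bullet\to F_\bullet$. You instead characterize the admissible top components intrinsically:
\begin{enumerate}
\item Since $F_5=0$, and the entries of $\partial_4$ generate $I$ (self-duality of the minimal resolution of the Gorenstein ring $R/I$), the homotopy freedom in $\Phi_4^\flat$ is exactly $(I^{[2]})_6$.
\item Since $\varphi^*F_\bullet$ resolves the Gorenstein ring $R/I^{[2]}$, dualizing gives $(I^{[2]}:I)=I^{[2]}+(\Phi_4^\flat)$, with $(I^{[2]}:I)/I^{[2]}\cong (R/I)(-6)$.
\end{enumerate}
So the proposition is \emph{equivalent} to $H_{0,S}\in(I^{[2]}:I)\setminus I^{[2]}$. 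This gives what the computer check does not. It explains why only the parity of $|S|$ matters: elements of $O_3$ are squares of permutation monomials, which are congruent modulo $I^{[2]}$. It shows $H_{0,S}$ is the only answer modulo $I^{[2]}$. And via $G$-symmetry it cuts the work to one membership test.

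Two corrections are needed.
\begin{enumerate}
\item The step $c=1$ uses that $F_\bullet$ is defined over $\bbF_2$. Over a larger $\fld$, rescaling the basis of $F_4$ by $c\in\fld^*$ rescales $\Phi_4^\flat$ by $c$. So run the argument over $\bbF_2$ and base change; the statement tacitly assumes this too.
\item Your test for (3) is not valid as stated. One monomial with nonzero normal form does not exclude cancellation, and $R/I^{[2]}$ has positive depth, so its socle is zero. Instead, note that $I^{[2]}\subseteq\m^{[2]}=(z_{i,j}^2)$ is contained in a monomial ideal, while the squarefree monomial $\eta_2$ occurs in $H_{0,S}$ with coefficient $1$.
\end{enumerate}

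As written, (2) is still delegated to a computer, so at its crux your argument is no more self-contained than the paper's. However, the combinatorial proof you were hoping for does exist.
\begin{enumerate}
\item By Fedder's lemma, $f\in(I^{[2]}:I)$ if and only if $T(fI)\subseteq I$. Here $T(z^{\mathbf a})=z^{(\mathbf a-\mathbf 1)/2}$ if every entry of $\mathbf a$ is odd, and $T(z^{\mathbf a})=0$ otherwise.
\item The ideal $I$ is the toric ideal of $z_{i,j}\mapsto a_ib_j$. Hence it is spanned by binomials $z^w-z^{w'}$ with equal row sums $r(\cdot)$ and equal column sums $c(\cdot)$.
\item Every monomial of $H_{0,S}$ has $r=c=(2,2,2)$. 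So every nonzero term of $T(H_{0,S}z^w)$ has row sums $(r(w)-(1,1,1))/2$ and column sums $(c(w)-(1,1,1))/2$, and all such terms agree modulo $I$. Their number is the number $N$ of monomials of $H_{0,S}$ congruent to $\mathbf 1+w$ modulo $2$.
\item Read exponent vectors mod $2$ as subgraphs of $K_{3,3}$. Then $O_3$ reduces to the empty graph ($|S|$ times), $O_1$ to the nine $4$-cycles, and $O_2$ to the six Hamiltonian cycles. Thus each of the $16$ even subgraphs of $K_{3,3}$ is hit an odd number of times.
\item Hence $N$ is odd when all row and column sums of $w$ are odd, and $N=0$ otherwise. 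So $T(H_{0,S}z^w)$ modulo $I$ depends only on $(r(w),c(w))$, which gives $T(H_{0,S}I)\subseteq I$.
\end{enumerate}
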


\begin{proof}  
The proof was given by Macaulay2. The code is contained in the file \cite{AKOPP54}.
\end{proof}

\subsection{Original Jerry}   

We study, the  codimension $4$ Gorenstein 
ideal $I$ called 
"Original Jerry", which is the  the homogeneous ideal of the 
Segre embedding of   $ \mathbb{P}^1 \times \mathbb{P}^1 \times  \mathbb{P}^1$
inside $\mathbb{P}^7$, compare \cite{Re} and  \cite[p.~265]{P2}.
We use the  notation for the variables introduced in  \cite[p.~25, Section~6]{Re}.

   Assume $\mathbbm{k}$  is a characteristic $2$ field. 
    We consider  the polynomial ring 
    \[
           R =  \mathbbm{k} [x,t, y_1,y_2 ,y_3, z_1, z_2 ,z_3]
    \]
    with the standard grading. 
	
 We consider the group
$G=  \mathfrak{S}_3 \times \mathbb{Z}/(2)$ acting on $R$ by 
\[ 
     (\sigma ,[m]) \cdot y_i = y_{\sigma(i)},  \quad  (\sigma ,[m]) \cdot z_i = z_{\sigma(i)}, 
      \quad     (\sigma , [0]) \cdot x = x,  \quad  (\sigma ,[0]) \cdot t = t, 
\]
\[
    (\sigma , [1]) \cdot x = t, \quad   (\sigma ,[1]) \cdot t = x. 
\]

We denote by $I \subset R$ the following ideal 
\[
   I \coloneqq ( xz_1-y_2y_3,   xz_2-y_1y_3,    xz_3-y_1y_2,  
 ty_1 - z_2z_3,   ty_2-z_1z_3,    ty_3-z_1z_2,  xt-y_1z_1,  xt - y_2z_2,   xt - y_3z_3 ).
\]
As already mentioned,   the ideal $I$ is the homogeneous ideal of the 
Segre embedding of   $ \mathbb{P}^1 \times \mathbb{P}^1 \times  \mathbb{P}^1$
inside $\mathbb{P}^7$.  We denote by $I^{[2]}$ the second
Frobenius power of $I$.

We consider the polynomials
\[
   \eta_1=y_1y_2y_3z_1z_2z_3, \phantom{=} \eta_2 = xy_1y_2z_1z_2t, \phantom{=}
   \eta_3 =  xy_1z_1^2z_2z_3,  \phantom{=}   \eta_4 = x^2z_1z_2z_3t, \phantom{=}
  \eta_5 =  x^2y_1z_1t^2    
\]
and the set  
\[
        Q = \{ xy_i^2z_i^2t  :  1 \leq i \leq 3 \}.
\]
The set $Q$ is the $G$-orbit of $xy_1^2z_1^2t$.

For $1 \leq i \leq 5$ we denote by $O_i$ the $G$-orbit of the element $\eta_i$.
We have 
\[
   \# O_1 =  1,  \quad  \# O_2 =  3,  \quad  \# O_3 =  6,  \quad  \# O_4 =  2,
        \quad  \# O_5 =  3.
\]
 For a subset $S \subset Q$ of  odd cardinality, we set
\[
   H_{0,S} = \sum_{u \in S} u +  \sum_{i=1}^{5} \sum_{u \in O_i} u.
\]

\begin{prop} Assume $S \subset Q$ is a subset of odd cardinality.
    We consider the minimal graded free resolution 
     \[
             F_\bullet:  \quad  0 \to R(-6)  \to R(-4)^9 \to R(-3)^{16} 
                            \to  R(-2)^9  \to  R
     \]
           of  $R/I$   as  $R$-module, and the second Frobenius power 
\[
        \varphi^*F_\bullet:  \quad  0 \to R(-12)  \to R(-8)^9 \to R(-6)^{16} 
                            \to  R(-4)^9  \to  R
     \]   
    Then there exists 
           a chain map $\Phi^\flat_\bullet\colon \varphi^*F_\bullet\to F_\bullet$ such that $\Phi^\flat_0$ is the identity map and 
           \[\Phi^{\flat}_4\colon \varphi^*F_4\cong R(-12)\to R(-6)\cong F_4\]          is multiplication by $H_{0,S}$.
\end{prop}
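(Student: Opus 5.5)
The plan is to construct $\Phi^\flat_\bullet$ degree by degree, starting from $\Phi^\flat_0=\operatorname{Id}_R$. Each square is lifted using exactness of $F_\bullet$, and the freedom in the top lift is then used to make $\Phi^\flat_4$ equal to $H_{0,S}$. This is the same strategy as the Macaulay2 verifications for Original Tom and for the $(5\times5)$ Pfaffians in characteristic $3$, and the computation can be certified the same way.

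Write $M_1,\dots,M_4$ for the differentials of the Eagon--Northcott-type resolution of $I$. Here $M_1$ is the row of the nine quadrics, $M_4$ is a column of nine quadrics (by Gorenstein self-duality, $M_4\cong M_1^t$ up to sign and basis choice), and $M_2,M_3$ are linear. The construction proceeds as follows.

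\begin{enumerate}
\item Set $\Phi^\flat_1$ to be a $9\times9$ matrix with $\varphi(M_1)=M_1\Phi^\flat_1$. Such a matrix exists since $I^{[2]}\subseteq I$. It can be found by expressing each squared generator, e.g.\ $(xz_1-y_2y_3)^2$, in terms of the generators of $I$.
\item Find $\Phi^\flat_2$ with $M_2\Phi^\flat_2=\Phi^\flat_1\varphi(M_2)$. This is possible because $M_1\Phi^\flat_1\varphi(M_2)=\varphi(M_1M_2)=0$ and $F_\bullet$ is exact at $F_1$.
\item Find $\Phi^\flat_3$ with $M_3\Phi^\flat_3=\Phi^\flat_2\varphi(M_3)$, in the same way.
\item Solve $M_4 h=\Phi^\flat_3\varphi(M_4)$ for a scalar polynomial $h$ of degree $6$. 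Here $h$ is determined uniquely, since $M_4$ is injective.
\end{enumerate}

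To see that $h$ can be made equal to $H_{0,S}$, I would use the standard fact that $\Phi^\flat_\bullet$ is unique up to chain homotopy. Changing $\Phi^\flat_3$ by $M_3\,s_2+s_3\varphi(M_4)$, with $s_3\colon \varphi^*F_4\to F_3$, changes $h$ by an element of the ideal generated by the entries of $M_4$, namely $I$. So the class of $h$ modulo $I$ is intrinsic, and it equals the class of the generator of the socle-type map $R/I^{[2]}\to R/I$ restricted to $\omega$. Equivalently, $h$ is determined modulo $I$ by the condition $I^{[2]}:I = (h)+I^{[2]}$, the Fedder-type colon ideal, which is standard for Gorenstein ideals. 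It therefore suffices to do two things. First, compute one valid $h$ (by linear algebra over $\bbF_2$ in degree $6$, or by computing $I^{[2]}:I$). Second, check that $h-H_{0,S}\in I$. Since $I$ is homogeneous and $G$-stable, this can be done by reducing $H_{0,S}$ modulo a Gröbner basis of $I$ and comparing.

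The independence of the choice of odd subset $S\subset Q$ then follows if $xy_i^2z_i^2t-xy_j^2z_j^2t\in I$. This holds because $y_iz_i\equiv xt$ modulo $I$, which gives $x(y_iz_i)^2t\equiv x^3t^3$ for every $i$. Hence any two odd subsets of $Q$ give sums differing by an even number of terms that are pairwise congruent modulo $I$, which is $0$ in characteristic $2$ modulo $I$.

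The main obstacle is the central claim that $h\equiv H_{0,S}\pmod I$. Conceptually, one could try a combinatorial argument like Lemma~\ref{l:FO}, expanding $\Phi^\flat_3\varphi(M_4)$ via the $G$-symmetry and counting orbits mod $2$. Practically, I would verify this identity by a Gröbner basis computation in Macaulay2, as in the preceding propositions.
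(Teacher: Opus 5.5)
There is a genuine gap in your reduction step. $\Phi^\flat_4$ is determined modulo $I^{[2]}$, not modulo $I$. Your overall strategy is reasonable: lift $\operatorname{Id}_R$ square by square, adjust the top component, and certify by computer, which matches the paper's proof (a Macaulay2 computation). With $\Phi^\flat_0=\operatorname{Id}$ fixed, two chain maps differ in degree $3$ by $M_4s_3+s_2\varphi(M_3)$, where $s_3\colon\varphi^*F_3\to F_4$ and $s_2\colon\varphi^*F_2\to F_3$. Your expression $M_3s_2+s_3\varphi(M_4)$ with $s_3\colon\varphi^*F_4\to F_3$ does not typecheck. Injectivity of $M_4$ then forces the top components to differ by $s_3\varphi(M_4)$. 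The entries of $\varphi(M_4)$ are the squares of the nine quadrics, so the attainable values of $\Phi^\flat_4$ form exactly the coset $h+(I^{[2]})_6$. Your own colon description agrees: $I^{[2]}:I=(h)+I^{[2]}$ determines $h$ modulo $I^{[2]}$. So the statement to verify is $H_{0,S}-h\in I^{[2]}$. Once you write $H_{0,S}-h=s_3\varphi(M_4)$, replacing $\Phi^\flat_3$ by $\Phi^\flat_3+M_4s_3$ gives $\Phi^\flat_4=H_{0,S}$.

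Your proposed test, $h-H_{0,S}\in I$, carries no information here. $I$ is prime of height $4$, so in $R_I$ it is generated by a regular system of parameters $u_1,\dots,u_4$. Then $(u_1^2,\dots,u_4^2):(u_1,\dots,u_4)=(u_1^2,\dots,u_4^2,u_1u_2u_3u_4)\subseteq IR_I$. Hence $I^{[2]}:I\subseteq I$, and in particular $h\in I$. Your check therefore reduces to $H_{0,S}\in I$. This does hold, as you can see using $y_iz_i\equiv xt$ and $z_jz_k\equiv ty_i$, but $0$ and every element of $I_6$ pass it too. $0$ is not a valid $\Phi^\flat_4$, because $h\notin I^{[2]}$: the map $\Ext^4_R(R/I,R)\to\Ext^4_R(R/I^{[2]},R)$ induced by $R/I^{[2]}\to R/I$ is injective.

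The fix is to reduce $H_{0,S}-h$ modulo a Gröbner basis of $I^{[2]}$. Equivalently, since $((I^{[2]}:I)/I^{[2]})_6\cong (R/I)_0$ is one-dimensional over $\bbF_2$, check that $H_{0,S}I\subseteq I^{[2]}$ and $H_{0,S}\notin I^{[2]}$. Your argument for independence of $S$ survives at the correct level. Indeed $(xt-y_iz_i)^2=x^2t^2+y_i^2z_i^2\in I^{[2]}$, so $xy_i^2z_i^2t\equiv x^3t^3\pmod{I^{[2]}}$ for every $i$.
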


\begin{proof}  

The proof was given by Macaulay2. The code is contained in the file  \cite{AKOPP55}.
\end{proof}

\bibliographystyle{myamsalpha}
\bibliography{references}

@misc{AHL,
	Author = {Karim Adiprasito},
	Eprint = {arXiv:1812.10454},
	Title = {{Combinatorial Lefschetz theorems beyond positivity}},
	Year = {2018},
	note = {\href{https://arxiv.org/abs/1812.10454}{arXiv:1812.10454}},
}

@book {BrunsHerzog,
    AUTHOR = {Bruns, Winfried and Herzog, J\"urgen},
     TITLE = {Cohen-{M}acaulay rings},
    SERIES = {Cambridge Studies in Advanced Mathematics},
    VOLUME = {39},
 PUBLISHER = {Cambridge University Press, Cambridge},
      YEAR = {1993},
     PAGES = {xii+403},
      ISBN = {0-521-41068-1},
   MRCLASS = {13H10 (13-02)},
  MRNUMBER = {1251956},
MRREVIEWER = {Matthew\ Miller},
}

@misc{APPKM,
	Author = {Karim Adiprasito and Stavros Argyrios Papadakis and Vasiliki Petrotou},
	Title = {The volume intrinsic to a commutative graded algebra},
	Year = {2024},
	Eprint = {arXiv:2407.11916},
	note = {\href{https://hal.science/hal-04661500}{hal--04661500}}
}

@misc{PP,
      title={The characteristic 2 anisotropicity of simplicial spheres}, 
      author={Stavros Argyrios Papadakis and Vasiliki Petrotou},
      year={2020},
      eprint={2012.09815},
      archivePrefix={arXiv},
      primaryClass={math.AC},
      url={https://arxiv.org/abs/2012.09815}, 
      note={\href{https://arxiv.org/abs/2012.09815}{arxiv:2012.09815}}
}

@misc{KLS,
      title={Differential operators, anisotropy, and simplicial spheres}, 
      author={Kalle Karu and Matt Larson and Alan Stapledon},
      year={2025},
      eprint={2412.04561},
      archivePrefix={arXiv},
      primaryClass={math.CO},
      url={https://arxiv.org/abs/2412.04561}, 
      note={\href{https://arxiv.org/abs/2412.04561}{arxiv:2412.04561}}
}

@misc{APPlattice,
      title={Lattice polytopes and semigroup algebras: generic {L}efschetz properties and {P}arseval-{R}ayleigh identities}, 
      author={Karim Adiprasito and Stavros Argyrios Papadakis and Vasiliki Petrotou},
      year={2025},
      eprint={2509.14152},
      archivePrefix={arXiv},
      primaryClass={math.CO},
      url={https://arxiv.org/abs/2509.14152}, 
      note={\href{https://hal.science/hal-05238822}{hal--05238822}}
}

@misc{AOPP:complete,
      title={Parseval--{R}ayleigh identities for homogeneous complete intersections}, 
      author={Karim Adiprasito and Ryoshun Oba and Stavros Argyrios Papadakis and Vasiliki Petrotou},
      year={2025},
      eprint={2511.05288},
      archivePrefix={arXiv},
      primaryClass={math.AC},
      url={https://arxiv.org/abs/2511.05288}, 
      note={\href{https://hal.science/hal-05354563}{hal--05354563}}
}

@article{MZ,
	author = {Migliore, Juan and Zanello, Fabrizio},
	title = {The strength of the weak {Lefschetz} property},
	fjournal = {Illinois Journal of Mathematics},
	journal = {Ill. J. Math.},
	issn = {0019-2082},
	volume = {52},
	number = {4},
	pages = {1417--1433},
	year = {2008},
	language = {English},
	zbMATH = {5660666},
	Zbl = {1178.13011}
}

@Misc{M2,
	author = {Grayson, Daniel R. and Stillman, Michael E.},
	title = {Macaulay2, a software system for research in algebraic geometry},
	howpublished = {Available at \url{http://www2.macaulay2.com}},
}

@article {MN:tour,
    AUTHOR = {Migliore, Juan and Nagel, Uwe},
     TITLE = {Survey article: a tour of the weak and strong {L}efschetz
              properties},
   JOURNAL = {J. Commut. Algebra},
  FJOURNAL = {Journal of Commutative Algebra},
    VOLUME = {5},
      YEAR = {2013},
    NUMBER = {3},
     PAGES = {329--358},
      ISSN = {1939-0807,1939-2346},
   MRCLASS = {13E10 (13A02 13C40 13D40)},
  MRNUMBER = {3161738},
MRREVIEWER = {Adela\ N.\ Vraciu},
       DOI = {10.1216/JCA-2013-5-3-329},
       URL = {https://doi-org.proxy.lib.ohio-state.edu/10.1216/JCA-2013-5-3-329},
}

@article{ABD,
	title = {{Lefschetz properties of some codimension three Artinian Gorenstein algebras}},
	journal = {Journal of Algebra},
	volume = {625},
	pages = {28-45},
	year = {2023},
	issn = {0021-8693},
	doi = {https://doi.org/10.1016/j.jalgebra.2023.03.005},
	url = {https://www.sciencedirect.com/science/article/pii/S0021869323001059},
	author = {Nancy Abdallah and Nasrin Altafi and Anthony Iarrobino and Alexandra Seceleanu and Joachim Yaméogo}
}

@article {Batyrev:variations,
    AUTHOR = {Batyrev, Victor V.},
     TITLE = {Variations of the mixed {H}odge structure of affine
              hypersurfaces in algebraic tori},
   JOURNAL = {Duke Math. J.},
  FJOURNAL = {Duke Mathematical Journal},
    VOLUME = {69},
      YEAR = {1993},
    NUMBER = {2},
     PAGES = {349--409},
      ISSN = {0012-7094,1547-7398},
   MRCLASS = {14M25 (14D07 14F40 14J45 32J25)},
  MRNUMBER = {1203231},
MRREVIEWER = {Richard\ M.\ Hain},
       DOI = {10.1215/S0012-7094-93-06917-7},
       URL = {https://doi.org/10.1215/S0012-7094-93-06917-7},
}

@incollection {Braun:unimodality,
    AUTHOR = {Braun, Benjamin},
     TITLE = {Unimodality problems in {E}hrhart theory},
 BOOKTITLE = {Recent trends in combinatorics},
    SERIES = {IMA Vol. Math. Appl.},
    VOLUME = {159},
     PAGES = {687--711},
 PUBLISHER = {Springer, [Cham]},
      YEAR = {2016},
      ISBN = {978-3-319-24296-5; 978-3-319-24298-9},
   MRCLASS = {52B20 (05A20)},
  MRNUMBER = {3526428},
MRREVIEWER = {Ruriko\ Yoshida},
       DOI = {10.1007/978-3-319-24298-9\_27},
       URL = {https://doi-org.proxy.lib.ohio-state.edu/10.1007/978-3-319-24298-9_27},
}

@article {DimcaIlardi,
    AUTHOR = {Dimca, Alexandru and Ilardi, Giovanna},
     TITLE = {Lefschetz properties of {J}acobian algebras and {J}acobian
              modules},
   JOURNAL = {Ann. Sc. Norm. Super. Pisa Cl. Sci. (5)},
  FJOURNAL = {Annali della Scuola Normale Superiore di Pisa. Classe di
              Scienze. Serie V},
    VOLUME = {26},
      YEAR = {2025},
    NUMBER = {3},
     PAGES = {1603--1616},
      ISSN = {0391-173X,2036-2145},
   MRCLASS = {32S25 (13A02 13D02 13E10 14B05)},
  MRNUMBER = {4991580},
}

@article {Stanley:Hilbertfunctions,
    AUTHOR = {Stanley, Richard P.},
     TITLE = {Hilbert functions of graded algebras},
   JOURNAL = {Advances in Math.},
  FJOURNAL = {Advances in Mathematics},
    VOLUME = {28},
      YEAR = {1978},
    NUMBER = {1},
     PAGES = {57--83},
      ISSN = {0001-8708},
   MRCLASS = {13D10 (13H10)},
  MRNUMBER = {485835},
MRREVIEWER = {Idun\ Reiten},
       DOI = {10.1016/0001-8708(78)90045-2},
       URL = {https://doi-org.proxy.lib.ohio-state.edu/10.1016/0001-8708(78)90045-2},
}

@incollection {Watanabe,
    AUTHOR = {Watanabe, Junzo},
     TITLE = {The {D}ilworth number of {A}rtinian rings and finite posets
              with rank function},
 BOOKTITLE = {Commutative algebra and combinatorics ({K}yoto, 1985)},
    SERIES = {Adv. Stud. Pure Math.},
    VOLUME = {11},
     PAGES = {303--312},
 PUBLISHER = {North-Holland, Amsterdam},
      YEAR = {1987},
      ISBN = {0-444-70314-4},
   MRCLASS = {13E10 (13C99)},
  MRNUMBER = {951211},
MRREVIEWER = {W.\ Heinzer},
       DOI = {10.2969/aspm/01110303},
       URL = {https://doi-org.proxy.lib.ohio-state.edu/10.2969/aspm/01110303},
}

@article {THamiltonian,
    AUTHOR = {Thomason, A. G.},
     TITLE = {Hamiltonian cycles and uniquely edge colourable graphs},
   JOURNAL = {Ann. Discrete Math.},
  FJOURNAL = {Annals of Discrete Mathematics},
    VOLUME = {3},
      YEAR = {1978},
     PAGES = {259--268},
   MRCLASS = {05C45 (05C15)},
  MRNUMBER = {499124},
MRREVIEWER = {J.\ Bos\'ak},
}

@book{BV,
  author    = {Winfried Bruns and Udo Vetter},
  title     = {Determinantal Rings},
  series    = {Lecture Notes in Mathematics},
  volume    = {1327},
  publisher = {Springer-Verlag},
  address   = {Berlin},
  year      = {1988}
}

@misc{GP,
  author = {{Groupprops}},
  title  = {Subgroup structure of symmetric group: S5},
  note   = {\url{https://groupprops.subwiki.org/wiki/Subgroup_structure_of_symmetric_group:S5}}
}

@misc{GP2,
  author = {{Groupprops}},
  title  = {General affine group: GA(1,5)},
  note   = {\url{https://groupprops.subwiki.org/wiki/General_affine_group:GA(1,5)}}
}

@incollection {Re,
    AUTHOR = {Miles Reid},
     TITLE = {Graded Rings and Birational Geometry},
 BOOKTITLE = {Proc. of algebraic symposium (Kinosaki, Oct 2000)},
     PAGES = {1--72}
}

@article{P2,
  author  = {Stavros A. Papadakis},
  title   = {Kustin--{M}iller unprojection with complexes},
  journal = {Journal of Algebraic Geometry},
  volume  = {13},
  year    = {2004},
  pages   = {249--268}
}

@misc{Po,
      title={Parseval-{R}ayleigh identities for graded {A}rtinian {G}orenstein algebras}, 
      author={Mykola Pochekai},
      year={2026},
      eprint={2604.27631},
      archivePrefix={arXiv},
      primaryClass={math.AC},
      url={https://arxiv.org/abs/2604.27631},
      note={\href{https://arxiv.org/abs/2604.27631}{arxiv:2604.27631}} 
}

@misc{AKOPP51,
   author = {Karim Adiprasito and Eric Katz and Ryoshun Oba and Stavros
Argyrios Papadakis and Vasiliki Petrotou},
   title = {Macaulay2 Code 51},
   year = {2026},
   note = {HAL Id: hal-05609907}
}

@misc{AKOPP52,
   author = {Karim Adiprasito and Eric Katz and Ryoshun Oba and Stavros
Argyrios Papadakis and Vasiliki Petrotou},
   title = {Macaulay2 Code 52},
   year = {2026},
   note = {HAL Id: hal-05609914}
}

@misc{AKOPP53,
   author = {Karim Adiprasito and Eric Katz and Ryoshun Oba and
Stavros Argyrios Papadakis and Vasiliki Petrotou},
   title = {Macaulay2 Code 53},
   year = {2026},
   note = {HAL Id: hal-05609918}
}

@misc{AKOPP54,
   author = {Karim Adiprasito and Eric Katz and Ryoshun Oba and
Stavros Argyrios Papadakis and Vasiliki Petrotou},
   title = {Macaulay2 Code 54},
   year = {2026},
   note = {HAL Id: hal-05609923}
}

@misc{AKOPP55,
   author = {Karim Adiprasito and Eric Katz and Ryoshun Oba and
Stavros Argyrios Papadakis and Vasiliki Petrotou},
   title = {Macaulay2 Code 55},
   year = {2026},
   note = {HAL Id: hal-05609927}
}

@unpublished{APPhal,
	TITLE = {{Combinatorial Lefschetz theorems beyond positivity II: Total Anisotropy and its variants}},
	AUTHOR = {Karim Adiprasito and Stavros Argyrios Papadakis and Petrotou, Vasiliki},
         URL = {https://hal.science/hal-05124466},
	YEAR = {2023},
	MONTH = Jun,
      note={\href{https://hal.science/hal-05124466}{hal--05124466}},
	HAL_ID = {hal--05124466},
	HAL_VERSION = {v1},
}

\end{document}